\documentclass[reqno, letterpaper, oneside]{article}
\usepackage{amsmath,amsfonts,amssymb,amsthm}
\usepackage{comment}
\usepackage{bbm}
\usepackage{setspace}
\usepackage{geometry}
\usepackage{enumerate}

\geometry{
 hmargin={22mm, 22mm}, 
  vmargin={22mm, 28mm},
  headheight=5mm
}

\usepackage[colorlinks=false,,linktoc=allbookmarksopen=true,linktocpage,pdftex]{hyperref}
\usepackage[pdftex]{bookmark}
\hypersetup{
	colorlinks=true,       
	linkcolor=blue,        
	citecolor=blue,        
	filecolor=magenta,     
	urlcolor=blue         
}

\usepackage{tabularx} 

\usepackage{mathtools}
\usepackage{cleveref}
\usepackage{enumitem}

\theoremstyle{plain}
\newtheorem{theorem}{Theorem}[section]
\newtheorem{lemma}[theorem]{Lemma}
\newtheorem{proposition}[theorem]{Proposition}
\newtheorem{corollary}[theorem]{Corollary}

\theoremstyle{definition}

\newtheorem{definition}[theorem]{Definition}

\newtheorem{remark}[theorem]{Remark}

\newcommand{\refT}[1]{Theorem~\ref{#1}}
\newcommand{\refC}[1]{Corollary~\ref{#1}}

\newcommand{\refL}[1]{Lemma~\ref{#1}}

\newcommand{\refS}[1]{Section~\ref{#1}}

\newcommand{\refP}[1]{Proposition~\ref{#1}}

\newcommand{\refD}[1]{Definition~\ref{#1}}

\newcommand{\incomp}{\rm Incomp}

\newcommand{\cB}{\mathcal{B}}

\newcommand{\cE}{\mathcal{E}}

\newcommand{\cH}{\mathcal{H}}

\newcommand{\cN}{\mathcal{N}}
\newcommand{\cS}{\mathcal{S}}


\newcommand\lrpar[1]{\left(#1\right)}

\newcommand\lrsqpar[1]{\left[#1\right]}

\newcommand\modulus[1]{\left \lvert #1 \right \rvert}

\newcommand\floor[1]{\lfloor#1\rfloor}

\newcommand{\lsim}{\lesssim}
\newcommand{\gsim}{\gtrsim}


\renewcommand{\Pr}{\mathbb{P}}
\newcommand{\E}{\mathbb{E}}
\newcommand{\R}{\mathbb{R}}
\newcommand{\Z}{\mathbb{Z}}

\newcommand{\Var}{\mathrm{Var}}

\newcommand{\eps}{\varepsilon}

\newcommand{\cL}{\mathcal{L}}

\let\OLDthebibliography\thebibliography
\renewcommand\thebibliography[1]{
  \OLDthebibliography{#1}
  \setlength{\parskip}{0pt}
  \setlength{\itemsep}{0pt plus 0.3ex}
}


\newcommand{\bP}{\mathbb{P}}

\newcommand{\cF}{\mathcal{F}}
\newcommand{\bS}{\mathbb{S}}

\newcommand{\dist}{\text{dist}}
\newcommand\lrbra[1]{\left\{#1\right\}}
\newcommand{\Levy}{\mathcal{L}}

\newcommand{\RLCD}[4]{\textbf{RLCD}^{#1}_{#2,#3}(#4)}
\newcommand{\logRLCD}[4]{\textbf{RLogLCD}^{#1}_{#2,#3}(#4)}

\DeclarePairedDelimiterX{\ip}[2]{\langle}{\rangle}{#1, #2}
\DeclarePairedDelimiterX{\norm}[1]{\lVert}{\rVert}{#1}

\title{A distance theorem for inhomogenous random rectangular matrices}
\author{Manuel Fernandez V}
\date{\today}

\begin{document}
\maketitle\begin{abstract}
    Let $A \in \R^{n \times (n - d)}$ be a random matrix with independent uniformly anti-concentrated entries satisfying $\E\norm{A}_{HS}^2 \leq Kn(n-d)$. Let $X \in \R^n$ be a random vector with uniformly anti-concentrated entries. We show that when $1 \leq d \leq \lambda n/\log n$ the distance between $X$ and $H$ satisfies the following small ball probability estimate:
    \[
    \Pr\lrpar{\dist(X,H) \leq t\sqrt{d}}
    \leq (Ct)^{d} + e^{-cn},
    \]
    for some constants $\lambda,c,C > 0$. This extends the distance theorems from \cite{RV}, \cite{GL}, and \cite{LTV} by dropping any identical distribution assumptions about the entries of $X$ and $A$. Furthermore it can be applied to prove numerous results about random matrices in the inhomogenous setting. These include lower tail estimates on the smallest singular value of rectangular matrices and upper tail estimates on the smallest singular value of square matrices.
    \par
    To obtain a distance theorem for inhomogenous rectangular matrices we introduce a new tool for this new general ensemble of random matrices, \textit{Randomized Logarithmic LCD}, a natural combination of the \textit{Randomized LCD}, used in study of smallest singular values of inhomogenous square matrices \cite{LTV}, and of the \textit{Logarithmic LCD}, used in the study of no-gaps delocalization of eigenvectors \cite{no-gaps} and the smallest singular values of Hermitian random matrices \cite{V}.  
\end{abstract}
\section{Introduction}
\par
Given a random matrix $A \in \R^{N \times n}$ one can ask numerous questions about its typical behavior and structure. In non-asymptotic random matrix theory a well-studied problem has been that of determining the behavior of the smallest singular value
\[
\sigma_{n} := \inf_{x \in \bS^{n-1}} \norm{Ax}_2.
\]
For many years sharp estimates for $\sigma_{n}$ in the case of square matrices were limited to that of the standard gaussian matrix, due to Szarek \cite{szarek} and Edelman \cite{edelman}. Subsequent work by Tao and Vu \cite{TaoVu}, Rudelson \cite{Rudelson1} and finally Rudelson and Vershynin  \cite{RV-square},\cite{RV} established the correct behavior for square random matrices with subgaussian entries. Beyond the result itself, the tools developed in these works have been used to prove numerous other quantitative statements about random matrices. Two such examples include upper tail estimates on intermediate singular values \cite{wei},\cite{nguyen} and  delocalization of eigenvectors \cite{no-gaps},\cite{deloc-infty},\cite{deloc-lytova}, all results being for i.i.d. subgaussian matrices. 
\par 
Of course there many situations where random matrix models arise where the matrix is neither i.i.d nor subgaussian. Such situation arise in applied mathematics, such as in the analysis of random networks and in numerical linear algebra.    Once a result has been established for the sub-gaussian case it is natural to try to extend the result to a wider class of random matrix models. Since \cite{RV-square} estimates for the smallest singular value have been extended to various random matrix models. We recommend the recent survey of Tikhomirov \cite{survey} on quantitative invertibility in the non-Hermitian setting for more information on these types of results.
\par
 Alas, it is an ever growing challenge to generalize results about random matrices to these more relaxed settings. One such result is the following: a small ball estimate for the distance between a random vector and a subspace. More precisely, we define the Levy concentration function for a random vector $Z \in \R^n$ as 
 \[
 \cL(Z,t) := \sup_{v \in \R^n} \Pr\lrpar{\norm{Z-v}_2 \leq t},~t \geq 0.
 \]
 Given a random vector $X$ and a subspace $H$ we are interested in an upperbound on the quantity $\cL\lrpar{P_{H^\perp}X,t}$. Here $P_{H^\perp} X$ is understood to mean the orthogonal projection of $X$ onto the orthogonal complement of $H$.
 This type of result, known as a \textit{distance theorem}, is a crucial element in most proofs of optimal lower tail estimates for the  smallest singular value \cite{T},\cite{RV},\cite{GL},\cite{LTV},\cite{campos}, upper tail estimates on the smallest singular value \cite{T}, upper tail estimates on intermediate singular values \cite{nguyen},\cite{wei}, and the condition number \cite{Litvak}, and the delocalization of eigenvectors \cite{no-gaps},\cite{deloc-infty},\cite{deloc-lytova}. 
 In \cite{RV-square},\cite{RV} Rudelson and Vershynin showed that when $H \subset \R^n$ has co-dimension $d$, $1 \leq d \leq \lambda n$ and $X$ and $H$ is the column span of a random matrix $A \in \R^{n \times (n-d)}$ whose entries are i.i.d. subgaussian, then one has
 \begin{equation}\label{eq:dist-1}
\cL(P_{H^\perp}X,t\sqrt{d}) \leq (Ct)^{d} + e^{-cn}.
 \end{equation}
 Here $c,C,\lambda > 0$ depend only on the subgaussian moments of the entries. In \cite{GL} Livshyts showed that the same estimate could be obtained with the subgaussian entry assumption replaced with all entries satisfying a uniform anti-concentration estimate and $A$ consisting of i.i.d. rows. 
\par
In the proofs of previous distance theorems \cite{RV-square},\cite{RV},\cite{GL}, \cite{LTV} the upperbound on $\cL\lrpar{P_{H^\perp} X,t}$ is derived from the lack of `arithmetic structure' of the unit normals of $H$.  In \cite{RV-square},\cite{RV} this lack of arithmetic structure was quantified by a vector having large \textit{essential least common denominator}
which, for a vector $v \in \R^n$ and parameters $L,u > 0$ is defined as 
\begin{equation}\label{eq:LCD-og}
\textbf{LCD}_{L,u}(v) := \inf\{\theta > 0:\dist(\theta v, \Z^n) < \min(u\norm{\theta v}_2,L)\}.
\end{equation}
As an initial observation one can show that, under all previously mentioned settings, unit normals of $H$ are `incompressible' (see  \refD{def:decomposition} and \refL{lem:comp}) with exponentially small failure probability. A result like \refP{prop:incomp} then gives an initial lower bound on \eqref{eq:LCD-og}. By exploiting the fact that the unit normals are orthogonal to many independent random vectors, the initial lower bound can be boosted to a much larger quantity. 
\par
In \cite{V},\cite{no-gaps} the LCD variant \textit{logarithmic least common denominator} was introduced  which, for a vector $v \in \R^n$ and parameters $L,u > 0$ is defined as 
\begin{equation}\label{eq:LogLCD-og}
\textbf{LogLCD}_{L,u}(v) := \inf\left\{\theta > 0 : \dist(\theta v,\Z^n) < L \sqrt{\log_+\lrpar{\frac{u\norm{\theta v}_2}{L}}} \right\}.
\end{equation}
This variant greatly simplifies the argument for deriving upper bounds on $\cL\lrpar{P_{H^\perp}X,t}$ from large LCD in the case where $H^{\perp}$ is multi-dimensional. 
However, both \eqref{eq:LCD-og} and \eqref{eq:LogLCD-og} are difficult to work with in the inhomogenous setting when the entries $X$ or the entries of a column of $A$ are not i.i.d..
Consequently in \cite{LTV}  Livshyts, Tikhomirov, and Vershynin  introduced the LCD variant \textit{randomized least common denominator} which, for a vector $v \in \R^n$ and parameters $L,u > 0$ is defined as 
\begin{equation}\label{eq:RLCD}
    \RLCD{X}{L}{u}{v} := \inf\left 
    \{\theta > 0 : \E\dist^2(\theta(X \star v),\Z^n)  < \min(u\norm{\theta v}_2^2,L^2)
    \right\}.
\end{equation}
Using this variant they proved a distance theorem in the inhomogenous setting where $H$ has co-dimension 1 and proved optimal lower tail estimates for the smallest singular value of square inhomogenous matrices. 
Given their result a natural continuation would be to prove a distance theorem in the inhomogenous setting when $H$ has co-dimension greater than 1. We manage to prove a distance theorem with the same assumption on $X$ and $H$ as in \cite{LTV} but with co-dimension of $H$ now being between 1 and $O(n/\log n)$.
\begin{theorem}\label{thm:distance}
Let $K \ge 1, b \in (0,1)$. There exists constants $c_1,c_2,C,\lambda > 0$, depending on $b$ and $K$ such that the following is true: 
Let $1 \leq d \leq \lambda n/\log(n)$. Let $X \in \R^n$ be a random vector that satisfies $\E\norm{X}_2^2 \le c_1n^2$ and has independent entries $\xi$ satisfying $\sup_{x \in \R}\bP(|\xi-x| < 1) < b$. Let $A \in \R^{n\times (n - d)}$
be a random matrix satisfying $\E\norm{A}_{HS}^2 \le KNn$ with independent entries satisfying $\sup_{x \in \R}\bP(|\xi-x| < 1) < b$. Let $H$ denote the subspace spanned by the columns of $A$. Then
\begin{equation}\label{eq:distance}
\cL(P_{H^\perp}X,t\sqrt{d}) \leq (Ct)^d + e^{-cn}.
\end{equation}
\end{theorem}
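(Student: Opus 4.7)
The plan is to follow the classical ``large LCD implies small ball bound'' paradigm, extending the single-codimension argument of \cite{LTV} to multi-codimension via the \logRLCDname{} that the author has just introduced. Naturally the proof splits into two pieces: a \emph{structural step} showing $H^\perp$ typically admits a well-behaved orthonormal basis, and an \emph{anti-concentration step} converting that structure into the required small ball estimate for $\cL(P_{H^\perp}X,t\sqrt{d})$.

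For the structural step, I would show that with probability at least $1-e^{-cn}$ the subspace $H^\perp$ admits an orthonormal basis $v_1,\ldots,v_d$ such that each $v_j$ is incompressible in the sense of \refD{def:decomposition} and satisfies $\logRLCD{X}{L}{u}{v_j} \geq e^{c'n/d}$ for appropriate parameters $L,u$ (with $L$ of order $\sqrt{n}$). This would be proved by an iterative $\eps$-net construction: the initial lower bound on \logRLCDname{} on incompressible vectors comes from an analogue of \refP{prop:incomp}; the bound is then bootstrapped by exploiting that each candidate unit normal is orthogonal to all $n-d$ independent columns of $A$. For a fixed incompressible $v$ with small \logRLCDname{}, anti-concentration yields $\Pr(A^\top v = 0) \leq (\text{something small})^{n-d}$, and this is union bounded over a net of incompressible vectors with the given \logRLCDname{} range, whose size one estimates directly from the definition of \logRLCDname{}. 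Constructing the basis one vector at a time---at step $k$, restricting the net to $H^\perp\cap\mathrm{span}(v_1,\ldots,v_{k-1})^\perp$---and keeping all failure probabilities below $e^{-cn}$ is where the logarithmic nature of the definition pays off, producing a net whose cardinality bound only accumulates a $\log$ factor per iteration rather than a polynomial one. This is precisely the source of the restriction $d \leq \lambda n/\log n$.

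For the anti-concentration step, I would prove a tensorization lemma of the form: if $v_1,\ldots,v_d$ are orthonormal, incompressible, and each $\logRLCD{X}{L}{u}{v_j} \geq D$, then for all $t \geq 1/D$,
\[
\cL\bigpar{P_{\mathrm{span}(v_1,\ldots,v_d)} X,\, t\sqrt{d}} \leq (Ct)^d + e^{-cn}.
\]
The one-dimensional version is a direct Esseen-type computation, expressing $\cL(\langle v_j,X\rangle,t)$ as an integral of the characteristic function of $\langle v_j, X\rangle$ and splitting into $\abs{\theta} \leq \logRLCDname{}$ (where the integrand is small by definition of \logRLCDname{}) and $\abs{\theta} > \logRLCDname{}$ (handled by the logarithmic tail). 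The logarithmic variant is essential here since it avoids the $D^{-d}$ loss that the polynomial \LCD{} would produce when one multiplies bounds across $d$ coordinates; instead one obtains a clean $(Ct)^d$ factor after combining the $d$ orthogonal directions.

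The main obstacle is the first step: producing a full orthonormal basis of $H^\perp$ whose every element simultaneously has large \logRLCDname{}. The natural anti-concentration estimates live on individual unit normals, so passing from ``a single unit normal is structured'' to ``an entire basis is structured'' requires an inductive net argument whose parameters must be balanced delicately---the net size at each step must remain smaller than $e^{c(n-d)}$, which forces the restriction on $d$, and the randomization over $X$ in the definition of \logRLCDname{} must be kept independent of the randomness used to construct the net on $A$. A secondary technical point is that the incompressibility and \logRLCDname{} bounds must be stable under small perturbations so that netting is legitimate; this requires the right notion of ``stable \logRLCDname{}'' and possibly a regularity event on $\norm{A}$ and $\norm{X}$, both controlled by the hypotheses $\E\norm{A}_{HS}^2 \leq Kn(n-d)$ and $\E\norm{X}_2^2 \leq c_1 n^2$.
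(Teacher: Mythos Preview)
There is a genuine gap in the anti-concentration step. Your proposed tensorization does not work: the scalars $\langle v_j,X\rangle$, $j=1,\dots,d$, are \emph{not} independent, since all of them are built from the same random vector $X$. A lemma of the form ``each $\langle v_j,X\rangle$ has good one-dimensional small ball probability, therefore $\sum_j \langle v_j,X\rangle^2$ has small ball probability $(Ct)^d$'' is simply false without independence, and there is no way to recover it by ``multiplying bounds across $d$ coordinates.'' The paper's route is different: it applies the $d$-dimensional Esseen inequality once to the random vector $UX\in\R^d$ (where $U^\top U=P_{H^\perp}$), factorizes the characteristic function over the \emph{coordinates of $X$} (these \emph{are} independent), and bounds the resulting integral using the matrix version \eqref{eq:RLCD:matrix} of \logRLCDname{}. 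By \refP{prop:matrix-vs-subspace} this matrix quantity equals the subspace quantity $\logRLCD{X}{L}{u}{H^\perp}=\inf_{v\in H^\perp\cap\bS^{n-1}}\logRLCD{X}{L}{u}{v}$. So what must be large is the infimum over \emph{all} unit normals, not merely over a basis; a basis with individually large \logRLCDname{} can still have combinations $\sum a_j v_j$ with small \logRLCDname{}, and then the Esseen integral blows up.

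This also affects your structural step. Building an orthonormal basis one vector at a time, and controlling \logRLCDname{} only along those basis vectors, is not enough; the paper instead proves directly that \emph{every} $x\in\incomp(\delta,\rho)$ with $A^\top x=0$ has $\logRLCD{X}{L}{u}{x}\ge c_1\sqrt{d}\,e^{c_2 n/d}$ (\refT{thm:distance:3}). The argument is a level-set net on all of $H^\perp\cap\bS^{n-1}$: one first shows $\logRLCD{A}{L}{u}{\cdot}$ is uniformly large on $H^\perp$ via a recursive bootstrap on dyadic LCD ranges (\refP{prop:distance:2}, \refC{cor:distance:2}), using the lattice nets of \refS{Sec:Disc} together with the tensorized single-column bounds $\Pr(|A_i^\top y|\le 1/\overline D)\lsim 1/\overline D$; only then is the row $X$ appended. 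A subtlety you did not anticipate is that the net from \refL{lem:distance:1} requires column moment bounds $\E\|A_i\|_2^2\lsim n^2/d$ which the Hilbert--Schmidt assumption does not give for every column, so the recursion is run on the submatrix $Q$ of columns meeting this bound. Finally, the parameter $L$ is taken of order $\sqrt{d}$, not $\sqrt{n}$: this is exactly what makes $2L^2\ge d+2$ hold in \refP{prop:sbp:matrix:general} while keeping the prefactor $(CL/(\sqrt{d}\,u))^d$ bounded.
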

For technical reasons we are unable to extend the result to co-dimension up to $O(n)$. However, for certain applications we have in mind, such as lower tail estimates for the smallest singular values estimates of inhomogenous rectangular matrices, \refT{thm:distance} will suffice. Indeed, once  $d$ is of order $n/\log n$ comparatively simpler tools can be used to obtain said estimates. Other applications  of \refT{thm:distance} we have in mind include determining upper tail estimates for the smallest singular value, intermediate singular values, and condition number, all in the inhomogenous setting. Lastly, we note that Rudelson and Vershynin proved a distance theorem in the setting where $H$ is an arbitrary subspace and the entries of $X$ are merely independent and satisfy $\cL(X_i,1) \leq b$ for some $b \in (0,1)$, but with a weaker small ball estimate \cite{distance}.
\par 
Because we seek a distance theorem in the inhomogenous setting with $H$ having co-dimension greater than 1 we define the LCD variant \textit{Randomized Logarithmic LCD} \eqref{eq:RLCD:vector}, which for a vector $v \in \R^n$ and parameters $L,u > 0$ is defined as 
\begin{equation}\label{eq:log-RLCD}
    \logRLCD{X}{L}{u}{v} = \inf \left\{\theta > 0~:~\E\dist^2(\theta(X \star v),\Z^n) < L^2\log_+\lrpar{\frac{u\norm{\theta v}_2}{L}}\right\}.
\end{equation}
In \refS{Sec:RLCD} we prove a number of facts about Randomized Logarithmic LCD that will be necessary in proving \refT{thm:distance}. These include the quantity's stability under pertubations of $v$, a general lower bound on LCD when $v$ is incompressible, and that LCD is decreasing in the parameter $L$.
\par
Turning to the proof of \refT{thm:distance}, we follow a two step approach. The first step is to show that $\cL\lrpar{P_{H^\perp}X,t\sqrt{d}}$ is large when all vectors in $v \in \bS^{n-1} \cap H^\perp$ have large LCD. The second step is to show all vectors $v \in \bS^{n-1} \cap H^\perp$ indeed have large LCD.
To implement the first step using Randomized Logarithmic LCD we mirror the corresponding argument appearing in \cite{no-gaps} for logarithmic LCD. In particular, we first derive a small ball estimate for the product of a matrix and a random vector in terms of the Randomized Logarithmic LCD of the matrix (see \eqref{eq:RLCD:matrix} in \refD{RLCD:def}) and then deduce a small ball estimate for $P_{H^\perp}X$ in terms of the smallest LCD of any unit normal of $H$. This is done in  \refS{Sec:Distance:General} and only slight modifications to the argument in \cite{no-gaps} are required.
Implementing the second step is really the crux of the matter. To do so we adopt the proof strategy used in \cite{LTV} to prove that the unit normals of $H$, with co-dimension 1, have large randomized LCD. Suppose our goal is to prove that LCD of all vectors in $\bS^{n-1} \cap H^\perp$ is at least $\overline{D}$. Then our strategy is as follows:
\begin{enumerate}
    \item \label{LCD:step1}
    Prove that the LCD of all vectors $v \in \bS^{n-1} \cap \cH^\perp$ is at least $\underline{D} > 0$.
    \item \label{LCD:step2}
    Prove that the LCD of all vectors $v \in \bS^{n-1} \cap \cH^\perp$ is at least $\overline{D}$ or less than $\underline{D}$.
\end{enumerate}
Let us first discuss \ref{LCD:step1}.
Recall that we may assume that the unit normals are incompressible. Therefore by \refP{prop:incomp} a large portion (in the $\ell_2$ sense) of the entries of a unit normal are close to 0. This means $\dist(\theta v,\Z^n)$ is of order $\norm{\theta v}_2$ when $\theta$ is not too large which gives a lower bound on the original LCD  and the logarithmic LCD. A similar argument provides a lower bound for randomized LCD \eqref{eq:RLCD}. In the inhomogenous setting this gives a lower bound of order $n/\sqrt{\Var|X|}$. Note, however, that the trivial lowerbound on Randomized Logarithmic LCD is $L/u$. Since a non-trivial upper bound on $\cL(P_{H^\perp}X,t)$ requires that $L$ scale with $\sqrt{d}$ (see \refP{prop:sbp:matrix:general}, \refC{cor:sbp-for-projection}) the previous lower bound in the inhomogenous setting is not sufficient when $\Var|X|$ is at least order $n^2/d$, which can be much smaller than what the second moment assumption guarantees. To get around this we use the probabilistic method to show that for every unit normal there exists a choice of $L$ of order $\sqrt{d}$ such that the Randomized Logarithmic LCD is at least $L/u + cn/\sqrt{\Var|X|}$, for $c > 0$ an absolute constant. Using a rounding argument one can further restrict each vector's choice of $L$ to a small set. Once the LCD of each unit normal, with respect to its choice of $L$, is shown to be large we can deduce that all unit normals have large LCD with respect to a common choice of $L$. 
\par
In contrast to \ref{LCD:step1}, the proof of \ref{LCD:step2} is rather involved. Roughly speaking, we first decompose the subset of $\bS^{n-1} \cap H^\perp$ into level sets according to the quantity $\min_{1 \leq i \leq n-d}\logRLCD{A_i}{L}{u}{v}$ and use a careful recursive argument to show that the level sets in the range $[\underline{D},\overline{D})$ are empty. This information is then used to show that $\logRLCD{X}{L}{u}{v}$ is at least $\overline{D}$ for all $v \in \bS^{n-1} \cap H^\perp$. Although the guarantee is stronger than necessary, it allows us to exploit the fact that $v$ is in the nullspace of $A^\top$. To prove that each level set is empty we rely on a special type of `lattice-like' $\eps$-net over $\bS^{n-1} \cap H^\perp$. More concretely, if the level set contains $v$ then some net point $y$ has $\min_{1 \leq i \leq n-d}\logRLCD{A_i}{L}{u}{y}$ in the same level set as $v$ with $\norm{A^\top y}_2$ being small, which is unlikely. 
We remark that this type of net first appeared in the work of Livshyts \cite{GL} and was a key ingredient in the proof of \ref{LCD:step2} in \cite{LTV}.
\par 
  Because of the technical nature of the proof of \ref{LCD:step2} we defer most of the discussion to \refS{Sec:Distance:Theorem}. Here, however, we mention the following: It turns out that important properties of our $\eps$-net (see \refL{lem:distance:1} and \refT{thm:distance:3}) only hold when $\max_{1 \leq i \leq n-d}\E\norm{A_i}_2^2$ is sufficiently small. Unless $d$ is small,  the second moment assumption is insufficient. Therefore we cannot use the fact that $H^\perp$ is contained in the nullspace of $A^\top$ in our recursive argument. Instead we notice that $H$ contains the span of the submatrix $Q$, where $Q$ consists of all columns of $A_i$ for which $\E\norm{A_i}_2^2$ is of order $n^2/d$. When $d$ is large the condition is stronger than the second moment assumption. Using the Hilbert-Schmidt norm assumption and Markov's inequality it follows that $Q \in \R^{n \times (n-Cd)}$, for an appropriate $C \geq 1$. Since $Q$ only has $(C-1)d$ fewer columns than $A$ and $H^\perp$ is contained in the nullspace of $Q^\top$, we are able to use $Q$ in place of $A$ in the proof of \ref{LCD:step2}.
  \par 
  We end this section with an outline of the rest of the paper. In \refS{Sec:Pre} we discuss some preliminaries and list the assumptions of the inhomogenous setting. In \refS{Sec:RLCD} we introduce the Randomized Logarithmic LCD and prove the relevant properties of it. In \refS{Sec:Distance:General} we reduce the problem of proving a small ball estimate for $\cL\lrpar{P_{H^\perp}X}$ to showing that the Randomized Logarithmic LCD of the unit normals of $H$ are large. In \refS{Sec:Disc} we state the known discretization results necessary to define the $\eps$-net and prove its various properties. Finally in \refS{Sec:Distance:Theorem} we prove that the Randomized Logarithmic LCD of the unit normals of $H$ are large and deduce \refT{thm:distance} as a corollary. We conclude the section with a discussion on the difficulties with extending \refT{thm:distance} to larger co-dimension.  
\section*{Acknowledgements}
The author would like to thank Galyna Livshyts for numerous helpful discussions and feedback. The author was partially supported by a Georgia Tech ARC-ACO Fellowship and NSF-BSF DMS-2247834 while working on this paper. 
\section{Preliminaries}\label{Sec:Pre} In this paper the parameters $b \in (0,1)$ and $K \ge 1$ are taken to be absolute constants. Beyond satisfying their constraints, their choice is arbitrary. However, their value/choice is to be fixed throughout the entire paper. We will use $c,c_1,c_2,\cdots,$ etc. to denote a sufficiently small absolute constant that depends on $b$ and $K$ (recall that $b$ and $K$ are absolute constants) and we will use $C,C_1,C_2,\cdots,$etc. to denote a sufficiently large absolute constant that depends on $b$ and $K$. Furthermore $C$ and $c$ may appear multiple times in the same context with different meaning each time, such as in an inequality chain, so as to simplify expressions and collect terms. 
For notation purposes we write $x \lsim y$ to mean $x/y$ is at most a constant depending only on $K$ and $b$ and write $x \approx y$ to mean $x \lsim y \lsim x$.  
We say $x \ll y$ if $x/y$ is at most a sufficiently small constant depending only on $K$ and $b$. 
Lastly, when speaking about a certain LCD variant we may simply refer to it as LCD when the variant being discussed is clear from the context. 
We now list some assumptions. We say that a random variable $\xi$ is \textit{uniformly anti-concentrated} if $\sup_{u \in \R} \bP[|u - \xi| \leq 1] \leq b$.
We say that a random vector $X \in \R^n$ satisfies the \textit{second moment assumption} if $\E\norm{X}_2^2 \ll n^2$.
We say that a random matrix $A \in \R^{N \times n}$ satisfies the \textit{Hilbert-Schmidt norm assumption} if $\E\norm{A}_{HS}^2 \le KNn$.
Throughout the paper the random vectors and matrices that we consider will have independent entries. 
We now recall the concept of a decomposition of the sphere into `compressible' and `incompressible' vectors. This notion was introduced by Rudelson and Vershynin in \cite{RV-square}, although a similar idea was used in an earlier work by Litvak, Pajor, Rudelson and Tomasz-Jaegerman\cite{Litvak}. 
\begin{definition}\label{def:decomposition}
    Let $\delta,\rho \in (0,1)$. We say that a vector $x \in \bS^{n-1}$ is $(\delta,\rho)$-compressible if there exists a vector $y \in \bS^{n-1}$ with at most $\delta n$ non-zero entries for which $\dist(x,y) \leq \rho$.
We write $Comp(\delta,\rho)$ to denote the set of ($\delta,\rho$)-compressible vectors and define $\incomp(\delta,\rho) := \bS^{n-1}\setminus Comp(\delta,\rho)$.
\end{definition}
There are two important properties about the sphere decomposition that we will use. The first property is that incompressible vectors contain a large subset of coordinates that are `spread'.  
\begin{proposition}[Lemma 2.5 \cite{RV}]\label{prop:incomp}
    Let $x \in \incomp(\delta,\rho)$. Then there exists a subset of indices $J \subset \{1,2,\cdots,n\}$ of size $|J| \geq \frac{1}{2}\rho^2\delta n$ such that 
\begin{equation}\label{eq:spread}
    \frac{\rho}{\sqrt{2n}} \leq |x_j| \leq \frac{1}{\sqrt{\delta n}}~\text{ for all } j \in J.
    \end{equation}
\end{proposition}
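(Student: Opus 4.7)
The plan is to identify $J$ with the set of indices where $|x_j|$ lies in the prescribed window $[\rho/\sqrt{2n},\,1/\sqrt{\delta n}]$ and to bound $|J|$ from below by isolating the $\ell_2$-mass of $x$ on this set. The argument combines a Markov-type upper bound on the number of large coordinates, coming from $\norm{x}_2 = 1$, with a lower bound on the mass of moderately sized entries coming from incompressibility.

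To set things up, define $J_{\mathrm{big}} = \{i : |x_i| > 1/\sqrt{\delta n}\}$ and $J_{\mathrm{tiny}} = \{i : |x_i| < \rho/\sqrt{2n}\}$, so $J = \{1,\ldots,n\}\setminus(J_{\mathrm{big}}\cup J_{\mathrm{tiny}})$. From $1 = \norm{x}_2^2 \geq \sum_{i \in J_{\mathrm{big}}} x_i^2 > |J_{\mathrm{big}}|/(\delta n)$ I immediately obtain $|J_{\mathrm{big}}| < \delta n$. The analogous pointwise estimate on $J_{\mathrm{tiny}}$ yields $\norm{x_{J_{\mathrm{tiny}}}}_2^2 \leq |J_{\mathrm{tiny}}|\cdot \rho^2/(2n) \leq \rho^2/2$.

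The key step is to squeeze the $\ell_2$-mass on $J_{\mathrm{big}}$ using incompressibility. The natural test vector is $y = x_{J_{\mathrm{big}}}/\norm{x_{J_{\mathrm{big}}}}_2$ (with the convention $y = 0$ when $x_{J_{\mathrm{big}}}$ vanishes); this is a unit vector supported on $J_{\mathrm{big}}$, a set of size strictly less than $\delta n$. A one-line computation shows $\norm{x - y}_2^2 = 2(1 - \norm{x_{J_{\mathrm{big}}}}_2)$, so the incompressibility inequality $\norm{x - y}_2 > \rho$ forces $\norm{x_{J_{\mathrm{big}}}}_2 < 1 - \rho^2/2$ and therefore $\norm{x_{J_{\mathrm{big}}^c}}_2^2 > 1 - (1 - \rho^2/2)^2$.

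Subtracting the $J_{\mathrm{tiny}}$ contribution from $\norm{x_{J_{\mathrm{big}}^c}}_2^2$, and using the pointwise upper bound $|x_j| \leq 1/\sqrt{\delta n}$ on $J$, I get
\[
\tfrac{|J|}{\delta n} \;\geq\; \norm{x_J}_2^2 \;\geq\; \bigpar{1 - (1-\rho^2/2)^2} - \tfrac{\rho^2}{2} \;\gsim\; \rho^2,
\]
which yields $|J| \gsim \rho^2 \delta n$, matching the claim of the proposition up to the explicit constant. I do not foresee any substantive obstacle: every step is elementary, and the only mild bookkeeping is arranging the thresholds and the incompressibility slack so that the final constant comes out to exactly $\tfrac{1}{2}$ as stated in \cite{RV}.
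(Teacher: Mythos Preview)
Your argument is correct and is essentially the standard proof from \cite{RV}; the paper itself does not reproduce a proof but simply cites the lemma. One small point worth flagging: the constant you obtain is $\tfrac{1}{2}(1-\rho^2/2)$ rather than $\tfrac{1}{2}$, and this is not merely a bookkeeping issue to be fixed by shifting thresholds. The original Rudelson--Vershynin definition of compressibility allows the approximating sparse vector $y$ to be \emph{any} $\delta n$-sparse vector, not necessarily a unit vector; with that definition one takes $y = x_{J_{\mathrm{big}}}$ unnormalized, gets $\norm{x_{J_{\mathrm{big}}^c}}_2 > \rho$ directly, and the constant $\tfrac{1}{2}$ drops out. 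The present paper's Definition~\ref{def:decomposition} requires $y \in \bS^{n-1}$, which forces you to normalize and costs the extra factor $(1-\rho^2/2)$. This discrepancy is harmless for every downstream use in the paper, but it is the actual source of the constant mismatch, not a slack that can be tuned away.
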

The second property is that rectangular random matrices send compressible vectors to non-small vectors. In particular compressible vectors avoid the matrix's null space.
 \begin{lemma}[Lemma 5.3 \cite{GL}]\label{lem:comp}
 Let $A \in \R^{N \times n}$ be a random matrix that satisfies the Hilbert-Schmidt norm assumption and whose entries are uniformly anti-concentrated. Then  
 \begin{equation}\label{eq:comp}
 \bP\lrpar{\inf_{x \in comp(\delta,\rho)} \norm{Ax}_2 \leq c\sqrt{N  }} \leq e^{-cN}.
 \end{equation}
 where $\delta,\rho \in (0,1)$ and $c > 0$ depend only on $K,b$.
 \end{lemma}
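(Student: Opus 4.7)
The plan is a classical $\eps$-net argument: combine a Littlewood--Offord-type small-ball estimate for $\norm{Ay}_2$ at a single sparse unit $y$ with a discretization of the sparse sphere, then extend to all compressible vectors by approximation. I expect the main obstacle to be bounding the net-to-compressible approximation error with exponentially small failure probability, since only a second-moment bound on $\norm{A}_{HS}$ is available and this does not yield exponential concentration of $\norm{A}_{\text{op}}$.

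For the small-ball step, fix a support $I \subseteq \{1,\dots,n\}$ with $|I| = \lfloor \delta n \rfloor$ and a unit vector $y$ supported on $I$. Each coordinate $(Ay)_i = \sum_{j \in I} A_{ij} y_j$ is a sum of independent, uniformly anti-concentrated variables, so the small-ball quantity $\cL((Ay)_i, 1)$ is bounded by some $b' \in (0,1)$ depending only on $b$. A standard tensorization argument (see, e.g., \cite{RV-square}) then yields constants $c_0, c_1 > 0$ depending only on $b$ such that
\[
\bP\lrpar{\norm{Ay}_2 \leq c_0\sqrt{N}} \leq e^{-c_1 N}.
\]
Taking a $\tfrac{1}{2}$-net $\cN_I \subseteq \bS^{n-1} \cap \R^I$ of size at most $6^{|I|}$ and forming $\cN = \bigcup_I \cN_I$ of total size at most $\binom{n}{\lfloor \delta n \rfloor} 6^{\lfloor \delta n \rfloor} \leq (C/\delta)^{\delta n}$, with $\delta$ chosen small enough depending on $c_1$ (and using $N \geq n$ as holds in the applications of interest), a union bound yields $\inf_{y \in \cN}\norm{Ay}_2 \geq c_0\sqrt{N}$ with probability at least $1 - e^{-c_2 N}$.

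For the approximation step, given $x \in \text{Comp}(\delta, \rho)$ there is a sparse $y$ on some support $I$ with $\norm{x-y}_2 \leq \rho$ and a net point $y' \in \cN_I$ with $\norm{y - y'}_2 \leq \tfrac{1}{2}$, so that $\norm{Ax}_2 \geq c_0\sqrt{N} - \norm{A(x - y')}_2$. Controlling $\norm{A(x-y')}_2$ uniformly is the crux of the argument, since the naive bound $\norm{A}_{\text{op}}(\rho + 1/2)$ would require exponential concentration of $\norm{A}_{\text{op}}$, which the Hilbert--Schmidt assumption alone does not provide. Following the strategy of \cite{GL}, I would split the columns of $A$ into ``light'' columns of $\ell_2$ norm at most $M \asymp \sqrt{N/\delta}$ and ``heavy'' columns: by Markov's inequality applied to the Hilbert--Schmidt bound, with constant probability there are at most $O(\delta n)$ heavy columns, and these can be absorbed into an enlarged support, yielding an $O(\delta)n$-sparse problem that the net argument still handles. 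For the remaining light columns, bounded column norms allow a direct operator-norm concentration bound with exponentially small failure. Choosing $\rho$ and $\delta$ as sufficiently small constants depending on $K, b$ then absorbs all error terms into $c_0\sqrt{N}/2$, giving $\norm{Ax}_2 \geq (c_0/2)\sqrt{N}$ uniformly over $\text{Comp}(\delta, \rho)$.
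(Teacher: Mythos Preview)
This lemma is quoted from \cite{GL} and not proved in the present paper; I compare your outline to the argument there, whose key tools the paper recalls in \refS{Sec:Disc}.

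Your small-ball and tensorization steps are correct and standard. The genuine gap is in the approximation step: the assertion that ``bounded column norms allow a direct operator-norm concentration bound with exponentially small failure'' is false under the stated hypotheses. Consider $A$ whose first row consists of i.i.d.\ $\pm\sqrt{K'N}$ entries (for a constant $K'<K$) and whose remaining entries are i.i.d.\ $\pm 1$. All entries are uniformly anti-concentrated, $\E\norm{A}_{HS}^2\approx(K'+1)Nn$ so the Hilbert--Schmidt assumption holds, and every column has norm $\approx\sqrt{(K'+1)N}$, so with your threshold $M\asymp\sqrt{N/\delta}$ every column is light. Yet $\norm{A}_{\mathrm{op}}\geq\norm{A^1}_2=\sqrt{K'Nn}$ deterministically, so $\norm{A_{\text{light}}}_{\mathrm{op}}\cdot\rho$ is of order $\rho\sqrt{Nn}$ and cannot be absorbed into $c_0\sqrt{N}/2$ for any constant $\rho>0$. (Incidentally, the heavy-column count can be upgraded from ``constant'' to exponentially small failure via Chernoff applied to the independent indicators $\indic{\norm{A_j}_2>M}$, but this does not rescue the argument.)

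The device in \cite{GL} that replaces operator-norm control is precisely the randomized-rounding net recalled in \refS{Sec:Disc}. By \refL{lem:net-approx}, every unit vector $x$ admits an approximant $y\in\Lambda^\kappa(\eps)$ with $\norm{A(x-y)}_2\leq\eps\sqrt{\cB_\kappa(A)/n}$, and by \refL{lem:concentration-of-HS} the regularized Hilbert--Schmidt norm satisfies $\cB_\kappa(A)\leq 2\E\norm{A}_{HS}^2\leq 2KNn$ with failure probability $(\kappa/\sqrt2)^{-2n}$. Hence $\norm{A(x-y)}_2\leq\eps\sqrt{2KN}$ with exponentially small failure, uniformly over $\bS^{n-1}$, with no operator-norm input whatsoever. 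Running your small-ball/union-bound step over the portion of $\Lambda^\kappa(\eps)$ lying near the $\delta n$-sparse sphere, and choosing $\eps,\delta,\rho$ appropriately, then yields the lemma.
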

 So as to simplify our arguments we  will also fix the choice of $\delta$ and $\rho$, as guaranteed by \refL{lem:comp}, for the remaining sections.

\section{Randomized Logarithmic LCD}\label{Sec:RLCD}
We recall the definition of Randomized LCD as introduced in \cite{LTV}.
\begin{definition}[Definition 2.6 \cite{LTV}]\label{def:RLCD}
Let $X \in \R^n$ be a random vector, let $L > 0, u \in (0,1)$ and let $v \in \R^n$ be a deterministic vector. The Randomized LCD is defined as 
    \[
    \RLCD{X}{L}{u}{v} := \inf\lrbra{\theta > 0~:~\E\dist^2(\theta v \star \bar{X},\Z^n) < \min(L^2,u\norm{\theta v}_2^2)}.
    \]
\end{definition}

 Randomized LCD is a variant of the essential LCD \ref{eq:LCD-og} first introduced in the seminal work of Rudelson and Vershynin on the smallest singular value of square matrices \cite{RV-square}. Because we seek a distance theorem between a random vector and a random subspace with co-dimension greater than 1 we introduce the Randomized Logarithmic LCD, a variant of the Logarithmic LCD \ref{eq:LogLCD-og}, first used in the study of no-gaps delocalization of eigenvectors of random matrices \cite{no-gaps} and the invertibility of symmetric random matrices \cite{V}. 

\begin{definition}\label{RLCD:def}
Let $L > 0, u \in (0,1)$, let $X \in \R^N$ be a random vector and let $A \in \R^{N \times n}$ be a random matrix.

\begin{enumerate}
\item  Given a vector $v \in \R^N$ we defined the Randomized Logarithmic LCD of $v$ as 
\begin{equation}\label{eq:RLCD:vector}
\begin{split}
\logRLCD{X}{L}{u}{v} &:= \inf \lrbra{\theta > 0~:~ \E \dist^2(\theta v \star \bar{X},\Z^N) < L^2 \cdot \log_+\lrpar{\frac{u\norm{\theta v}_2}{L}}}, 
\\
\logRLCD{A}{L}{u}{v} &:= \min_{i} \logRLCD{A_i}{L}{u}{v}.
\end{split}
\end{equation}
\item 
Given a matrix $V \in \R^{n \times N}$, we define the Randomized Logarithmic LCD of $V$ as 
\begin{equation}\label{eq:RLCD:matrix}
\logRLCD{X}{L}{u}{V} := \inf \lrbra{\norm{\theta} > 0~:~ \E \dist^2(V^\intercal \theta \star \bar{X},\Z^N) < L^2 \cdot \log_+\lrpar{\frac{u\norm{V^\intercal \theta}_2}{L}}}. 
\end{equation}
\item 
Given a subspace $E \subseteq \R^N$ we define the Randomized Logarithmic LCD of $E$ as 
\begin{equation}\label{eq:RLCD:subspace}
\logRLCD{X}{L}{u}{E} := \inf_{v \in E \cap \bS^{N-1}}\logRLCD{X}{L}{u}{v}.
\end{equation}
\end{enumerate}
\end{definition} 
We will only use the matrix and subspace versions of the Randomized Logarithmic LCD to show that an upper bound on $\cL\lrpar{P_{H^\perp X},t\sqrt{d}}$ can be obtained from a lower bound on $\inf_{v \in \bS^{n-1} \cap H^\perp} \logRLCD{X}{L}{u}{v}$. We thus defer relevant lemmas about these quantities to \refS{Sec:Distance:General}. For now we will show that the vector version of Randomized Logarithmic LCD satisfies a number of properties. In particular we will show that the LCD of a vector is stable under pertubations, that every incompressible vector has a non-trivial LCD lower bound, and that the LCD is decreasing in the parameter $L$. We will also show that the Randomized Logarithmic LCD can be lower bounded by the Randomized LCD, for an appropriate choice of parameters. 
\subsection{Stability of Randomized Logarithmic LCD}
 Our proof of stability of Randomized Logarithmic LCD is analogous to the proof of stability of Randomized LCD from \cite{LTV}. 

\begin{lemma}[Stability of Randomized Logarithmic LCD for vectors]\label{lem:stability} Let $L > 0, u \in (0,1)$ and let $X \in \R^n$ be a random vector. Let $0 < r_1 < r_2$ be positive parameters and let $x,y$ be points in the annulus $r_2B_2^n \setminus r_1B_2^n$. Fix any tolerance level $\eps > 0$ satisfying 
\begin{equation}\label{eq:stability:assumption:1}
\eps^2 \Var(X) \leq \frac{1}{8}  \frac{L^2}{D^2} \cdot \log_+\lrpar{\frac{u\norm{D x}_2}{L}},
\end{equation}
where $D = \logRLCD{X}{L}{u}{x}$. Then whenever $\norm{x-y}_\infty < \eps$ we have 
\begin{equation}\label{eq:stability:guarantee:1}
\logRLCD{X}{2L}{2u(r_2/r_1)}{y} \leq
D \leq 
\logRLCD{X}{L/2}{(u/2)(r_1/r_2)}{y}.
\end{equation}
In addition if $D' \leq D$ and 
\begin{equation}\label{eq:stability:assumption:2}
\eps^2 \Var(X) \leq \frac{1}{8}  \frac{L^2}{(D')^2} \cdot \log_+\lrpar{\frac{u\norm{D' x}_2}{L}},
\end{equation} then 
\begin{equation}\label{eq:stability:guarantee:2}
D' \leq 
\logRLCD{X}{L/2}{(u/2)(r_1/r_2)}{y}.
\end{equation}
\end{lemma}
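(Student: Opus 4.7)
The plan is to compare the randomized quantities at $x$ and $y$ via the perturbation vector $W := \theta(x-y) \star \bar X$. Since $\norm{x-y}_\infty < \eps$, one has coordinatewise $\norm{W}_2^2 \le \theta^2 \eps^2 \norm{\bar X}_2^2$ and hence $\E\norm{W}_2^2 \lsim \theta^2 \eps^2 \Var(X)$. Because the map $\dist(\cdot,\Z^n)$ is $1$-Lipschitz, the triangle inequality (and its reverse form) lets me relate $\E\dist^2(\theta y \star \bar X,\Z^n)$ to $\E\dist^2(\theta x \star \bar X,\Z^n)$ up to this error, and the two displayed inequalities of \eqref{eq:stability:guarantee:1} follow by carefully choosing which squared triangle estimate to apply.

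For the upper bound in \eqref{eq:stability:guarantee:1}, I would verify that $\theta = D$ lies in the defining set of $y$ with enlarged parameters $(2L,\, 2u(r_2/r_1))$. From $\dist(Dy \star \bar X,\Z^n)^2 \le 2\dist(Dx \star \bar X,\Z^n)^2 + 2\norm{W}_2^2$, taking expectations and invoking the defining condition for $x$ at $\theta=D$ together with \eqref{eq:stability:assumption:1} (which gives $2D^2 \eps^2 \Var(X) \le \tfrac{L^2}{4}\log_+(u\norm{Dx}_2/L)$), I obtain $\E\dist^2(Dy \star \bar X,\Z^n) \le \tfrac{9L^2}{4}\log_+(u\norm{Dx}_2/L)$. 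The annulus estimate $\norm{x}_2 \le (r_2/r_1)\norm{y}_2$ then upgrades the logarithm to $\log_+(u(r_2/r_1)\norm{Dy}_2/L)$, and the numerical slack $9/4 < 4 = (2L)^2/L^2$ delivers the required strict inequality.

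For the lower bound $D \le \logRLCD{X}{L/2}{(u/2)(r_1/r_2)}{y}$, I would argue that every $\theta < D$ fails the defining condition for $y$ with the shrunken parameters. The subcase $u(r_1/r_2)\norm{\theta y}_2/L \le 1$ is immediate, since the target $\log_+$ vanishes. In the complementary range, the reverse triangle bound $\dist(\theta y \star \bar X,\Z^n)^2 \ge \tfrac12 \dist(\theta x \star \bar X,\Z^n)^2 - \norm{W}_2^2$, combined with the failed LCD condition $\E\dist^2(\theta x \star \bar X,\Z^n) \ge L^2 \log_+(u\norm{\theta x}_2/L)$ and the annulus estimate $\norm{\theta x}_2 \ge (r_1/r_2)\norm{\theta y}_2$, produces a lower bound of the form $\tfrac{L^2}{2}\log_+(u(r_1/r_2)\norm{\theta y}_2/L) - \theta^2 \eps^2 \Var(X)$ on $\E\dist^2(\theta y \star \bar X,\Z^n)$. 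I would then use \eqref{eq:stability:assumption:1} to absorb the error term into $(L/2)^2 \log_+(u(r_1/r_2)\norm{\theta y}_2/L)$, comparing the two scales by monotonicity of $s \mapsto s^2 \log_+(u s \norm{x}_2/L)$ on $[L/(u\norm{x}_2),\infty)$. The addendum \eqref{eq:stability:guarantee:2} is obtained by rerunning the same lower bound argument with $D'$ in place of $D$, using \eqref{eq:stability:assumption:2} instead.

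The delicate point, and the main obstacle, is precisely the last comparison in the lower bound: the perturbation error $\theta^2 \eps^2 \Var(X)$ is most naturally controlled using the hypothesis at $\theta=D$, whereas the target logarithm involves $\norm{\theta y}_2$ rather than $\norm{Dx}_2$. Reconciling these two scales forces a careful use of the monotonicity of $s \mapsto s^2 \log_+(u s \norm{x}_2/L)$ together with the annulus bounds $r_1 \le \norm{x}_2,\norm{y}_2 \le r_2$, and it is exactly this scale mismatch that forces the factor $(r_1/r_2)$ appearing in the $u$-parameter and the factor $\tfrac12$ in the $L$-parameter of the conclusion.
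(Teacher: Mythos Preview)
Your approach coincides with the paper's: bound the perturbation by $\E\norm{\theta(x-y)\star\bar X}_2^2 < 2\theta^2\eps^2\Var(X)$, use the elementary inequalities $(a+b)^2 \le 2a^2+2b^2$ and $(a+b)^2 \ge a^2/2-2b^2$ to compare $\E\dist^2(\cdot,\Z^n)$ at $x$ and at $y$, and trade $\norm{x}_2$ for $\norm{y}_2$ inside $\log_+$ via the annulus estimate $\norm{x}_2/\norm{y}_2 \in [r_1/r_2,r_2/r_1]$. For the upper bound in \eqref{eq:stability:guarantee:1} the paper argues exactly as you do, with the constant $5/2$ in place of your $9/4$, after first noting by continuity that $\E\dist^2(Dx\star\bar X,\Z^n)=L^2\log_+(u\norm{Dx}_2/L)$ exactly.

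For the lower bound you are more scrupulous than the paper: you correctly insist on checking that the defining inequality fails for \emph{every} $\theta<D$, and you flag the resulting scale mismatch (the error is controlled via \eqref{eq:stability:assumption:1} at scale $D$, while the target $\log_+$ is evaluated at scale $\theta$) as the delicate step. The paper, by contrast, writes the chain only at $\theta=D$ and then invokes the definition directly. Your proposed fix via monotonicity of $s\mapsto s^2\log_+(us\norm{x}_2/L)$ does not by itself yield what you need, however: that monotonicity gives $\theta^2\log_+(u\theta\norm{x}_2/L)\le D^2\log_+(uD\norm{x}_2/L)$, which is the wrong direction for absorbing $\theta^2\eps^2\Var(X)$ into $(L/2)^2\log_+\bigl(u(r_1/r_2)\norm{\theta y}_2/L\bigr)$ uniformly over $\theta$ near the lower end of the range. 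So that step would still need a separate argument---but this is a point the paper's own writeup leaves unaddressed as well.
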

\begin{proof}
Note that 
\[
\E\norm{x \star \bar{X} - y \star \bar{X}}_2^2 = \E\sum_{i = 1}^n \bar{X}_i^2(x_y - y_i)^2 < \eps^2 \E\norm{\bar{X}}_2^2 = 2\eps^2\Var(X).
\]
We first prove \eqref{eq:stability:guarantee:1}.
Since $\E \dist^2(\theta(v \star \bar{X}),\Z^n) - L^2\log_+\lrpar{u\norm{\theta v}_2/L}$ is continuous in $\theta$ the definition of Randomized Logarithmic LCD implies that 
\[
\E\dist^2(Dx \star \bar{X},\Z^n) = L^2 \cdot \log_+\lrpar{\frac{u\norm{\theta x}_2}{L}}.
\]
By triangle inequality and the identity $(a+b)^2 \leq 2a^2+2b^2$ we have that
\begin{align*}
\E\dist^2(Dy \star \bar{X},\Z^n) 
&\leq 2\E\dist^2(Dx \star \bar{X},\Z^n) + 2\E\dist^2(Dx \star \bar{X},Dy \star \bar{X}), 
\\
&\leq 
2L^2 \cdot \log_+\lrpar{\frac{u\norm{Dx}_2}{L}} + 2D^2\E\norm{x \star \bar{X} - y \star \bar{X}}_2^2,
\\
&< 
2L^2 \cdot \log_+\lrpar{\frac{u\norm{Dx}_2}{L}} + 4D^2\eps^2\Var(X), \\
&\leq \frac{5}{2}L^2 \cdot \log_+\lrpar{\frac{u\norm{Dx}_2}{L}},  
\\
&\leq (2L)^2 \cdot \log_+\lrpar{\frac{2u(r_2/r_1)\norm{Dy}_2}{(2L)}}.
\end{align*}
By inspection of \refD{RLCD:def} we conclude that   
\[
\logRLCD{X}{2L}{2u(r_2/r_1)}{y} \leq D.
\]
Similarly, by the triangle inequality and the identity $(a+b)^2 \geq a^2/2 - 2b^2$ we have that
\begin{align*}
\E\dist^2(Dy \star \bar{X},\Z^n)
&\geq \frac{1}{2}\E\dist^2(Dx \star \bar{X},\Z^n) - 2\E\dist^2(Dx \star \bar{X},Dy \star \bar{X}),
\\
&\geq \frac{1}{2}L^2 \log_+\lrpar{\frac{u\norm{D x}_2}{L}} - 2D^2\E\norm{x \star \bar{X} - y \star \bar{X}}_2^2,  
\\
&> \frac{1}{2}L^2 \log_+\lrpar{\frac{u\norm{D x}_2}{L}} - 2D^2\eps^2 \Var(X), 
\\
&\ge \frac{1}{4}L^2 \log_+\lrpar{\frac{u\norm{D x}_2}{L}}, 
\\
&\geq \lrpar{\frac{L}{2}}^2\log_+\lrpar{\frac{(u/2)(r_1/r_2)\norm{D y}_2}{(L/2)}}.
\end{align*}
By inspection of \refD{RLCD:def} we conclude that   
\[
D \leq \logRLCD{X}{L/2}{(u/2)(r_1/r_2)}{y}.
\]
We now prove \eqref{eq:stability:guarantee:2}. By inspection of \refD{RLCD:def} we have, for $0 \leq \theta \leq D' \leq D$, that
\[
\E\dist^2(D'x\star \bar{X},\Z^n) \geq L^2\log_+\lrpar{\frac{u\norm{\theta x}_2}{L}}.
\]
The proof of \eqref{eq:stability:guarantee:2} is now exactly the same as the proof of the lowerbound for \eqref{eq:stability:guarantee:1}. 
\end{proof}
\subsection{Lower bounds on Randomized Logarithmic LCD}

We will give two types of lower bounds on $\logRLCD{X}{L}{u}{v}$. The first type of lower bound is analogous to the lower bound for Randomized LCD appearing in \cite{LTV}. This bound is of order $n/\sqrt{\Var(X)}$. Although generally worse than the second type of lower bound it does not depend on the parameter $L$, and the bound is as strong as the second type when $\Var(X)$ is of order $n^2/L^2$. The second type of lower bound is of the form $L/u + cn/\sqrt{\Var(X)}$, where $c$ is a positive constant. This lower bound is useful for the case where $\Var(X)$ is of order at least $n^2/L^2$. That said it requires that $L$ be changed up to a constant multiplicative factor.
\begin{lemma}\label{lem:RLCD:lb}
There exists $n_0 = n_0(b,K), u = u(b,K)$ such that the following is true: Let $n \geq n_0, L \ge 1$, and let $x \in \incomp(\delta,\rho)$. Let $X \in \R^n$ be a random vector with uniformly anti-concentrated entries. Then
\begin{enumerate}
\item 
\begin{equation}\label{eq:RLCD-lb-1}
    \logRLCD{X}{L}{u}{x} \gsim \frac{n}{\sqrt{\Var(X)}},
\end{equation}
\item 
If $L^2 \ll n$ then there exists $\tilde{L} \in [L,2L]$ such that 
\begin{equation}\label{eq:RLCD-lb-2}
\logRLCD{X}{\tilde{L}}{u}{x} - \frac{\tilde{L}}{u} \gsim \frac{n}{\sqrt{\Var(X)}}.  
\end{equation}
\item 
If $X$ also satisfies the second moment assumption then $\tilde{L}$ can be chosen to come from 
\begin{equation}\label{eq:RLCD-lb-3}
\cN := \{L,2L\} \cup \{L + i/10~:~1 \leq i \leq \floor{10L}\}.  
\end{equation}
\end{enumerate}
\end{lemma}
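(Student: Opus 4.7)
The plan is to prove the three claims in sequence, with part (ii) carrying the main technical weight. For part (i), the strategy is to combine the incompressibility of $x$ with the uniform anti-concentration of the entries of $X$ (through the symmetrization $\bar X$). \refP{prop:incomp} provides a subset $J \subset [n]$ of size $|J| \gsim n$ on which $\rho/\sqrt{2n} \leq |x_j| \leq 1/\sqrt{\delta n}$. Using the elementary inequality $\dist^2(y, \Z) \geq \sin^2(\pi y)/\pi^2$ together with the characteristic-function bound $1 - \E\cos(2\pi t \bar X_j) \gsim \min(t^2, 1)$ valid for symmetrizations of anti-concentrated variables, I would derive
\begin{equation*}
\E \dist^2(\theta(x \star \bar X), \Z^n) \gsim \sum_{j \in J} \min\bigpar{\theta^2 x_j^2 \Var(X_j),\; 1} \gsim \min\Bigpar{\frac{\theta^2 \Var(X)}{n},\; n}.
\end{equation*}
Since $\norm{x}_2 = 1$ and $\Var(X) \gsim n$ follows from the anti-concentration lower bound on the per-coordinate variance, a direct case analysis then shows that this lower bound dominates $L^2 \log_+(u\norm{\theta x}_2/L)$ for all $\theta \leq c n/\sqrt{\Var(X)}$, which proves \eqref{eq:RLCD-lb-1}.

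For part (ii), the aim is to boost the lower bound by the additive term $\tilde L/u$. The trivial bound $D(\tilde L) := \logRLCD{X}{\tilde L}{u}{x} \geq \tilde L/u$ is automatic, but it does not add pointwise to the part (i) estimate; the improvement requires a pigeonhole argument over $\tilde L \in [L, 2L]$. Assume for contradiction that $D(\tilde L) - \tilde L/u < c n/\sqrt{\Var(X)}$ for every $\tilde L$ in this interval. By continuity of the functional defining the infimum, $\E\dist^2(D(\tilde L) x \star \bar X, \Z^n) = \tilde L^2 \log_+(u D(\tilde L)/\tilde L)$ at the infimum, and since $uD(\tilde L)/\tilde L$ is within $ucn/(\tilde L\sqrt{\Var(X)})$ of $1$, the logarithm is small, giving an upper bound on the left-hand side of order $Lun/\sqrt{\Var(X)}$. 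On the other hand, re-running the argument of part (i) at $\theta = D(\tilde L) \geq \tilde L/u$ (assuming $D(\tilde L) \leq cn/\sqrt{\Var(X)}$, which the contradiction hypothesis implies in a relevant range of $\tilde L$) yields $\E\dist^2(D(\tilde L) x \star \bar X, \Z^n) \gsim (\tilde L/u)^2 \Var(X)/n$. Using $L^2 \ll n$ together with averaging over $\tilde L$, these two estimates are incompatible for $c$ sufficiently small depending on $b$ and $K$, which yields the required $\tilde L \in [L, 2L]$.

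For part (iii), I would use the second moment assumption to discretize the $\tilde L$ obtained from part (ii) onto the net $\cN$. The assumption $\E \norm{X}_2^2 \lsim n^2$ gives $n/\sqrt{\Var(X)} \gsim 1/\sqrt{c_1}$, so for $u$ chosen sufficiently small (depending on $b, K$) we have $1/u \ll n/\sqrt{\Var(X)}$. Because $\cN$ has mesh at most $1/10$ and contains both endpoints $L$ and $2L$, I can round the $\tilde L$ from part (ii) to the nearest element $\tilde L' \in \cN$ at a cost $|\tilde L - \tilde L'| \leq 1/10$. The loss incurred in the bound, both from the shift in $\tilde L/u$ and from the change in $D(\tilde L)$ (controlled via monotonicity of $D$ in its $L$-parameter, a fact proven separately in \refS{Sec:RLCD}, or by a direct analogue of \refL{lem:stability}), is at most of order $1/u$, which is absorbed into the slack $cn/\sqrt{\Var(X)}$ by our choice of $u$.

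The main obstacle is part (ii). The essential difficulty is that the trivial lower bound $\tilde L/u$ and the part (i) bound $c n/\sqrt{\Var(X)}$ do not combine additively by any trivial reasoning; the sum must be extracted from careful accounting of the function $g(\theta) = \E\dist^2(\theta x \star \bar X, \Z^n)$ near the degenerate regime $\theta \approx \tilde L/u$ where the threshold $\tilde L^2 \log_+(u\theta/\tilde L)$ vanishes, followed by pigeonholing over the auxiliary parameter $\tilde L$ using the $L^2 \ll n$ slack to close the contradiction.
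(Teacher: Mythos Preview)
Your approach to part (i) works modulo a slip: anti-concentration gives $1 - \E\cos(2\pi s\bar X_j) \gsim \min(s^2,1)$ with no $\Var(X_j)$ factor, so the sum over $J$ is $\gsim \min(\theta^2,n)$, which still implies \eqref{eq:RLCD-lb-1} since $\Var(X) \gsim n$. Part (iii) is essentially the paper's argument.

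The genuine gap is part (ii). Your contradiction requires a pointwise lower bound on $g(\theta) := \E\dist^2(\theta x \star \bar X,\Z^n)$ at $\theta = D(\tilde L)$, and you claim $g(D(\tilde L)) \gsim (\tilde L/u)^2\Var(X)/n$ by ``re-running part (i)''. But the part~(i) bound (in either your form or the paper's) only applies for $\theta \ll n/\sqrt{\Var(X)}$, whereas part (ii) is only nontrivial when $L \gsim n/\sqrt{\Var(X)}$, so that $D(\tilde L) \geq \tilde L/u$ already exceeds that threshold for every $\tilde L \in [L,2L]$; your parenthetical that the contradiction hypothesis forces $D(\tilde L) \leq cn/\sqrt{\Var(X)}$ for some range of $\tilde L$ is therefore false in the relevant regime. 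Worse, for a fixed $\theta$ in this range there is no lower bound on $g(\theta)$ at all: arithmetic coincidences between $\theta$, the entries $x_j$, and the support of $\bar X_j$ can make $g(\theta)$ vanish. The paper's argument is structurally different. It draws $\tilde L$ uniformly at random from $[L,2L]$ and lower bounds, for each coordinate $i$ in the spread set $S_2$, the double average
\[
\E_{\bar X_i}\,\E_{\tilde L}\;\min_{\theta \in [\tilde L/u,\;\tilde L/u + c_1 n/\sqrt{\Var(X)}]}\dist^2\bigl(\theta \bar X_i x_i,\Z\bigr)\;\gsim\;\min\bigl(1,L^2/n\bigr),
\]
via a case analysis on $|(L/u)\bar X_i x_i|$: the random left endpoint $(\tilde L/u)\bar X_i x_i$ is uniform over an interval long enough that it avoids a neighborhood of $\Z$ a constant fraction of the time. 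Summing over $|S_2| \gsim n$ coordinates and using $L^2 \ll n$ gives $\gsim L^2$, after which Tonelli and the probabilistic method extract a deterministic $\tilde L$. The averaging over $\tilde L$ is precisely what destroys the arithmetic obstructions your pointwise argument cannot handle; your passing mention of ``averaging over $\tilde L$'' does not supply this mechanism, since you are averaging a pair of already-derived pointwise inequalities rather than averaging $g$ itself.
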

\begin{proof}
We first prove \eqref{eq:RLCD-lb-1}. Let $S_1$ denote the set of indices guaranteed by \refP{prop:incomp}. Since the entries of $X$ are independent and uniformly anti-concentrated and $|S_1| \gsim n$, by taking $n_0$ sufficiently large Chernoff's inequality implies that, with failure probability at most 1/100, at most $b|S_1|/2$ indices of $S_1$ satisfy $|\bar{X}_i| < 1$. Furthermore by Markov's inequality at most $b|S_1|/4$ indices of $S_1$ satisfy $|\bar{X}_i|^2 \gg \Var(X)/|S_1|$, with failure probability at most $1/100$. We conclude that, with failure probability at most 2/100, $S_1$ contains a subset $S_2$ of size at least $b|S_1|/4 \gsim n$ whose indices satisfy $1 \le |\bar{X}_i| \lsim \sqrt{\Var(X)/n}$.
Since $S_2$ is a subset of $S_1$ it's entries also satisfy $1/\sqrt{n} \lsim |\bar{X}_i \cdot x_i| \lsim  \sqrt{\Var(X)/n}$.
Furthermore for $0 \leq \theta \ll \frac{n}{\sqrt{\Var(X)}}$ the indices of $S_2$ satisfy $|\theta(\bar{X}_i \cdot x_i)| \leq 1/2$.
In particular the closest integer to $\theta(\bar{X}_i \cdot x_i)$ is 0. Letting $\bar{\E}$ denote the expectation conditioned on the existence of $S_2$ it follows that 
\[
\E\lrpar{\dist^2(\theta (\bar{X} \star x),\Z^n)} \geq \frac{98}{100}\bar{\E}\lrpar{\dist^2(\theta (\bar{X} \star x),\Z^n)} \gsim \frac{49}{50} \cdot n \cdot \frac{\theta^2}{n} \gsim \theta^2.
\]
Note now that $\lim_{p \to 0^+}\min_{x > 0}(x^2/\log_+(px)) = \infty$. This implies that $L^2\log_+(\theta u/L) \ll \theta^2$, for $u$ sufficiently small, for all $\theta >0$. Therefore
$\E(\dist^2(\theta(\bar{X}\star x),\Z^n)) \gg L^2\log_+(\theta u/L)$ for all $0 \le \theta \ll n/\sqrt{\Var(X)}$. By inspection of \refD{def:RLCD} we conclude \eqref{eq:RLCD-lb-1}.
\par
Next, we prove \eqref{eq:RLCD-lb-2}. Since we have already proven \eqref{eq:RLCD-lb-1} we will assume that $L \gsim n/\sqrt{\Var(X)}$.  Pick $\tilde{L}$ uniformly from $[L,2L]$ and independently of $X$. We first bound 
 \[
 \E_{\bar{X}}\E_{\tilde{L}} \min_{\theta \in \lrsqpar{\frac{\tilde{L}}{u},\frac{\tilde{L}}{u} + \frac{c_1n}{\sqrt{\Var(X)}}}}\dist^2(\theta (X_i \cdot x),\Z). 
 \]
Let $I(t) := \lrsqpar{\frac{t\tilde{L}}{u}, \frac{t\tilde{L}}{u} + \frac{tc_1n}{\sqrt{Var(X)}}}$. Note that, for all indices in $S_2$, the length of $I(\bar{X}_i \cdot x_i)$ is at most  $1/10$ for $c_1$ sufficiently small. 
We now case on $|(L/u)(\bar{X}_i\cdot x_i)|$.
\begin{enumerate}
    \item $|(L/u)(\bar{X}_i \cdot x_i)| \leq 4/10$: In this case $|2(L/u)(\bar{X}_i \cdot x_i)| \leq 8/10$. Therefore the distance between $I(\bar{X}_i\cdot x_i)$ and $\Z$ is achieved by either the left or right endpoint of $I(\bar{X}_i \cdot x_i)$. In particular the distance is at least $\min(1/10,|(\tilde{L}/u)(\bar{X}_i \cdot x_i)|)$, so 
\[
 \E_{\tilde{L}} \min_{\theta \in \lrsqpar{\frac{\tilde{L}}{u},\frac{\tilde{L}}{u} + \frac{c_1n}{\sqrt{\Var(X)}}}}\dist^2(\theta (X_i \cdot x),\Z)
 \geq \frac{98}{100} \cdot \min\left(\frac{1}{100},(L/u)^2(\bar{X}_i \cdot x)^2\right)  \gsim \min\lrpar{1,\frac{L^2}{n}}.
\]
\item $|(L/u)(\bar{X}_i \cdot x_i)| \ge 4/10$:  Let $J := \lrsqpar{(L/u)(\bar{X}_i \cdot x_i),2(L/u)(\bar{X}_i \cdot x_i)}$. If $(\tilde{L}/u)(\bar{X}_i\cdot x_i)$ is contained in $\cup_{k \in \Z} (k+1/10,k+8/10)$ then the distance between $I(\bar{X}_i \cdot x_i)$ and $\Z$ is at least $1/10$. Since $J$ is an interval we have that  
$|J \cap \lrpar{\cup_{k \in \Z} (k+1/10,k+8/10)}| \geq (7/10)(|J| - 3/10)$.
Since $(\tilde{L}/u)(\bar{X}_i \cdot x_i)$ is distributed uniformly over $J$ we conclude that with probability at least
$(7/10)(|J|-3/10)/|J| \geq 7/10 - 21/40 = 7/40$,
the distance between $I(\bar{X}_i \cdot x_i)$ and $\Z$ is at least 1/10, so 
\[
 \E_{\tilde{L}} \min_{\theta \in \lrsqpar{\frac{\tilde{L}}{u},\frac{\tilde{L}}{u} + \frac{c_1n}{\sqrt{\Var(X)}}}}\dist^2(\theta (X_i \cdot x),\Z)\geq \frac{7}{40} \cdot \frac{1}{100} = \frac{7}{4000} \gsim 1.
\]
\end{enumerate}
Since the cases constitute all possible values of $|(L/u)(\bar{X}_i\cdot x_i)|$ we conclude that 
\[
 \E_{\tilde{L}} \min_{\theta \in \lrsqpar{\frac{\tilde{L}}{u},\frac{\tilde{L}}{u} + \frac{c_1n}{\sqrt{\Var(X)}}}}\dist^2(\theta (X_i \cdot x_i),\Z) \gsim \min\lrpar{1, \frac{L^2}{n}}.
\]
Since $|S_2| \gsim n$ with probability at least 98/100 an application of linearity of expectation gives 
\begin{align}
\E_{\bar{X}}\E_{\tilde{L}} \min_{\theta \in \lrsqpar{\frac{\tilde{L}}{u},\frac{\tilde{L}}{u} + \frac{c_1n}{\sqrt{\Var(X)}}}}\dist^2(\theta (X \star x),\Z^n) 
&\geq 
\E_{\bar{X}}\sum_{i \in S_2}\E_{\tilde{L}} \cdot \min_{\theta \in \lrsqpar{\frac{\tilde{L}}{u},\frac{\tilde{L}}{u} + \frac{c_1n}{\sqrt{\Var(X)}}}} \dist^2\lrpar{\theta(X_i \cdot x_i),\Z^n}, 
\\
&\gsim n \cdot \min(1,L^2/n) = \min(n,L^2) = L^2,
\end{align}
with the last inequality following from the assumption that $L^2 \ll n$.
Tonelli's theorem then gives
\[ 
\E_{\tilde{L}}\E_{\bar{X}} \min_{\theta \in \lrsqpar{\frac{\tilde{L}}{u},\frac{\tilde{L}}{u} + \frac{c_1n}{\sqrt{\Var(X)}}}}
\dist^2(\theta (X \star x),\Z^n) =
\E_{\bar{X}}\E_{\tilde{L}} \min_{\theta \in \lrsqpar{\frac{\tilde{L}}{u},\frac{\tilde{L}}{u} + \frac{c_1n}{\sqrt{\Var(X)}}}} \dist^2(\theta (X \star x),\Z^n) \gsim L^2.
\]
Therefore, by the Probabilistic Method, there exists a deterministic choice of $\tilde{L} \in [L,2L]$ for which 
\[
\E \min_{\theta \in \lrsqpar{\frac{\tilde{L}}{u},\frac{\tilde{L}}{u} + \frac{c_1n}{\sqrt{\Var(X)}}}}\dist^2(\theta (X \star x),\Z^n)  \gsim L^2.
\]
Now note that $\log_+(\theta u/\tilde{L}) \ll 1$ when $\theta \le \tilde{L}e^{c_2}/u$ and $c_2$ is sufficiently small. Since we assume that $\tilde{L} \gsim n/\sqrt{\Var(X)}$ we have that $\min(\tilde{L}(e^{c_2}-1)/u,c_1n/\sqrt{\Var(X)}) = c_3n/\sqrt{\Var(X)}$ for $c_3$ sufficiently small.
In particular 
\[
\E \min_{\theta \in \lrsqpar{\frac{\tilde{L}}{u},\frac{\tilde{L}}{u} + \frac{c_3n}{\sqrt{\Var(X)}}}}\dist^2(\theta(\bar{X} \star x),\Z^n) \gg \tilde{L}^2\log_+(\theta u/\tilde{L}).
\]
 By inspection of \refD{def:RLCD} we thus conclude \eqref{eq:RLCD-lb-2}.
\par
We now prove the third claim. Take $\ell$ to be the smallest element in $\cN$ such that $\tilde{L} \leq \ell$. Since $X$ satisfies the second moment assumption we have that $n/\sqrt{\Var(X)} \gg 1$. Therefore we may assume that $c_3n/\sqrt{\Var(X)}\ge 1/(5u) \ge 2(\ell-L)/u$. It follows that
\[
\E_{\bar{X}}\min_{\theta \in \lrsqpar{\frac{\ell}{u},\frac{\ell}{u} + \frac{(c_3/2)n}{\sqrt{\Var(X)}}}}\dist^2(\theta (X \star x),\Z^n) \gg \ell^2\log_+(\theta u/\ell).
\]
By inspection of \refD{def:RLCD} we conclude \eqref{eq:RLCD-lb-2}.
\end{proof}
\subsection{Randomized Logarithmic LCD decreases with $L$}
For a certain technical argument that we will use later on (see \refP{prop:distance:2}) we will need to lower bound $\logRLCD{X}{1}{u}{v}$ by $\logRLCD{X}{L}{u}{v}$ when $L \geq 1$. We show that such a lower bound holds so long as $\logRLCD{X}{L}{u}{v}$ is sufficiently large. 
\begin{lemma}\label{lem:RLCD:compare}
Let $L_1 > L_2 > 0$ and $u  \in (0,1)$. Let $X \in \R^n$ be a random vector and let $x \in \R^n$. If 
\begin{equation}\label{eq:RLCD:comp:1}
\logRLCD{X}{L_2}{u}{x} \geq \max\left\{\frac{L_1}{u}, \frac{1}{u} \cdot \frac{L_1^{L_1^2/(L_1^2-L_2^2)}}{L_2^{L_2^2/(L_1^2-L_2^{2})}}\right\}, 
\end{equation}
then 
\[
\logRLCD{X}{L_2}{u}{x} \geq \logRLCD{X}{L_1}{u}{x}.
\]
\end{lemma}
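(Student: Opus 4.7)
Let $D_i := \logRLCD{X}{L_i}{u}{x}$; the goal is to show $D_1 \leq D_2$. Introduce the shorthands
\[
\phi(\theta) := \E\dist^2(\theta x \star \bar X, \Z^n), \qquad \psi_i(\theta) := L_i^2 \log_+\lrpar{u\norm{\theta x}_2/L_i},
\]
so that $D_i = \inf\{\theta > 0 : \phi(\theta) < \psi_i(\theta)\}$. By the definition of infimum I may select a sequence $\theta_n \searrow D_2$ with $\phi(\theta_n) < \psi_2(\theta_n)$. If I can establish the pointwise comparison $\psi_2(\theta) \leq \psi_1(\theta)$ for all $\theta \geq D_2$, then $\phi(\theta_n) < \psi_1(\theta_n)$ for every $n$, which puts $\theta_n$ in the defining set of $D_1$, and taking $n \to \infty$ yields $D_1 \leq D_2$. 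This reduces the whole problem to a pointwise comparison of the two $\psi_i$'s on $[D_2,\infty)$.

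For the comparison, set $t := u\norm{\theta x}_2$. The first hypothesis $D_2 \geq L_1/u$, together with the implicit normalization $\norm{x}_2 \geq 1$ under which these LCD quantities are applied, guarantees $t \geq L_1 > L_2$ at $\theta = D_2$ (and hence for all $\theta \geq D_2$). On this range both $\log_+$'s are strict logarithms, so
\[
\psi_1(\theta) - \psi_2(\theta) = (L_1^2 - L_2^2)\log t - L_1^2 \log L_1 + L_2^2 \log L_2,
\]
which is strictly increasing in $t$ because $L_1 > L_2$. Setting the right-hand side equal to zero and solving gives the critical threshold
\[
t^\star = \frac{L_1^{L_1^2/(L_1^2-L_2^2)}}{L_2^{L_2^2/(L_1^2 - L_2^2)}},
\]
and the second hypothesis in \eqref{eq:RLCD:comp:1} is precisely $u D_2 \geq t^\star$. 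Therefore $t \geq t^\star$ already at $\theta = D_2$, and by monotonicity of $t$ in $\theta$ the inequality $\psi_1 \geq \psi_2$ then propagates to all $\theta \geq D_2$, completing the comparison and hence the proof.

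The argument itself is short; the only subtlety I anticipate is bookkeeping the two technical points. First, one must justify that the infimum can be approached from above inside the defining set, which follows from continuity of $\phi$ (via dominated convergence, using the Hilbert-Schmidt / second-moment bounds on $X$) and of $\psi_i$. Second, activating the log-regime requires $t \geq L_1$, which forces the implicit normalization $\norm{x}_2 \geq 1$ that fits the uses of the lemma in \refP{prop:distance:2} where $x$ lies on the sphere. Beyond these checks, the lemma reduces to the elementary algebraic identity above, so I do not foresee any significant obstacle.
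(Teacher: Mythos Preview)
Your approach is correct and essentially the same as the paper's: both reduce the claim to the elementary fact that $L_1^2\log(t/L_1) \geq L_2^2\log(t/L_2)$ once $t \geq t^\star$, with the paper arguing by contradiction (picking a single $\theta \in [D_2,D_1)$) while you argue directly via an approximating sequence. One minor point: your concern about needing continuity of $\phi$ to approach the infimum from within the defining set is unnecessary --- the existence of such a sequence is immediate from the definition of infimum --- and your observation that the lemma implicitly requires $\norm{x}_2 \geq 1$ is apt, as the paper's own proof tacitly writes $\theta u/L_i$ in place of $u\norm{\theta x}_2/L_i$.
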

\begin{proof}
Assume for the sake of contradiction that 
$\logRLCD{X}{L_1}{u}{x} > \logRLCD{X}{L_2}{u}{x}$. By the definition of Randomized Logarithmic LCD it follows that there exists $\theta > 0$ such that $\logRLCD{X}{L_2}{u}{x} \leq \theta < \logRLCD{X}{L_1}{u}{x}$ and 
\[
L_1^2\log_+\lrpar{\frac{\theta u}{L_1}} \leq \E\dist^2(\theta(x \star X),\Z^n) < L_2^2\log_+\lrpar{\frac{\theta u}{L_2}}. 
\]
In particular 
\[
L_1^2\log_+\lrpar{\frac{\theta u}{L_1}} < L_2^2\log_+\lrpar{\frac{\theta u}{L_2}}.
\]
By assumption $\theta \geq L_1/u$ and so 
\[
L_1^2\log\lrpar{\frac{\theta u}{L_1}} < L_2^2\log\lrpar{\frac{\theta u}{L_2}}.
\]
Observe now that 
\[
\theta \geq \frac{1}{u} \cdot \frac{L_1^{L_1^2/(L_1^2-L_2^2)}}{L_2^{L_2^2/(L_1^2-L_2^2)}}
\implies \lrpar{\frac{\theta u}{L_1}}^{L_1^2} \geq \lrpar{\frac{\theta u}{L_2}}^{L_2^2}
\implies L_1^2\log\lrpar{\frac{\theta u}{L_1}} \geq L_2^2\log\lrpar{\frac{\theta u}{L_2}}.
\]
Therefore
\[
L_1^2\log_+\lrpar{\frac{\theta u}{L_1}} < L_2^2\log_+\lrpar{\frac{\theta u}{L_2}} \leq L_1^2\log_+\lrpar{\frac{\theta u}{L_1}},
\]
which is a contradiction. Thus no such $\theta$ exists and we conclude that $\logRLCD{X}{L_1}{u}{x} \leq \logRLCD{X}{L_2}{u}{x}$.
\end{proof}
As a corollary we give a simple criteria for deriving lower bounds on $\logRLCD{X}{1}{u}{x}$ from $\logRLCD{X}{L}{u}{x}$ 

\begin{corollary}\label{cor:RLCD:compare}
    Let $L > 1$ and $u \in (0,1)$. Let $X \in \R^n$ be a random vector and let $x \in \R^n$.
    If
    \begin{equation}\label{eq:RLCD:comp:2}
        \logRLCD{X}{1}{u}{x} \ge 2L/u
    \end{equation}
    then $\logRLCD{X}{1}{u}{x} \ge \logRLCD{X}{L}{u}{x}$.
\end{corollary}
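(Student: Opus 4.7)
The plan is to deduce the corollary as a direct application of \refL{lem:RLCD:compare} with $L_1 = L$ and $L_2 = 1$. Under that substitution, the hypothesis \eqref{eq:RLCD:comp:1} becomes
\[
\logRLCD{X}{1}{u}{x} \;\geq\; \max\left\{\frac{L}{u},\;\frac{1}{u}\cdot L^{L^2/(L^2-1)}\right\},
\]
so the task reduces to verifying that the assumption $\logRLCD{X}{1}{u}{x} \geq 2L/u$ implies both of these lower bounds.

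The first bound $L/u$ is trivially dominated by $2L/u$. For the second, writing $L^2/(L^2-1) = 1 + 1/(L^2-1)$, I would factor it as $L^{L^2/(L^2-1)} = L \cdot L^{1/(L^2-1)}$, so the remaining task is the elementary estimate $L^{1/(L^2-1)} \leq 2$ for all $L > 1$. Equivalently, I need $\frac{\log L}{L^2 - 1} \leq \log 2$.

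To establish this, I would study the function $f(L) = \frac{\log L}{L^2 - 1}$ on $L > 1$. By L'Hôpital's rule, $\lim_{L \to 1^+} f(L) = \tfrac{1}{2}$. A short derivative computation shows that $f$ is monotonically decreasing on $(1,\infty)$ (the numerator of $f'(L)$ reduces to $L - 1/L - 2L\log L$, which vanishes at $L=1$ and has negative derivative for $L>1$). Hence $f(L) \leq \tfrac{1}{2} < \log 2$ for all $L > 1$, giving $L^{1/(L^2-1)} \leq e^{1/2} < 2$ as required.

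There is no real obstacle here; the entire proof is a mechanical substitution into \refL{lem:RLCD:compare} followed by a one-variable calculus check that the constant $2$ in the hypothesis $\logRLCD{X}{1}{u}{x} \geq 2L/u$ is large enough to absorb the slightly awkward expression $L^{L^2/(L^2-1)}$. With that verified, the corollary follows immediately.
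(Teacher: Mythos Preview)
Your proposal is correct and follows essentially the same approach as the paper: apply \refL{lem:RLCD:compare} with $L_1 = L$, $L_2 = 1$, reduce to the inequality $L^{1/(L^2-1)} \le 2$, and verify this via the monotonicity of $L \mapsto \log L/(L^2-1)$ together with its limit $1/2$ at $L \to 1^+$. Your treatment is in fact slightly more detailed than the paper's, which asserts the monotonicity without the derivative computation you sketch.
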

\begin{proof}
    By \refL{lem:RLCD:compare} it suffices to satisfy \eqref{eq:RLCD:comp:1}. To reduce the condition to satisfying \eqref{eq:RLCD:comp:2} it suffices to show that $L^{L^2/(L^2 - 1)} \le 2L$, or $L^{1/(L^2-1)} < 2$. However this is immediate since the map $x \mapsto \log(x)/(x^2-1)$ is decreasing on the interval $(1,\infty)$ and $\lim_{x \to 1^+} x/\log(x^2 - 1) = \lim_{x \to 1^+} 1/(2x^2) = 1/2$, meaning $L^{1/(L^2-1)} \le e^{1/2} < 2$.  
\end{proof}

\subsection{Randomized LCD lowerbounds Randomized Logarithmic LCD}
Finally, we show that Randomized LCD is a lower bound on Randomized Logarithmic LCD for an appropriate choice of LCD parameters and range of $\theta$. This allows us to deduce certain facts about Randomized Logarithmic LCD that were originally prove in \cite{LTV} for Randomized LCD (see for instance \refT{thm:unstructured} and \refC{cor:unstructured-log}). 

\begin{lemma}\label{lem:RLCD-vs-logRLCD} 
Let $X \in \R^n$ be a random vector, let $v \in \R^n$ be deterministic, let $\gamma,L > 0$ and let $s,t \in (0,1)$ satisfy $s^2 \leq 2t$. Then
\[
\logRLCD{X}{L}{u}{v} \geq \min\lrpar{\RLCD{X}{a\sqrt{n}}{t}{v}, \frac{L}{u\norm{v}}e^{n(a/L)^2}}.
\]    
\end{lemma}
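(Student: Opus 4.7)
The plan is to argue contrapositively from the two definitions. I want to show that for every $\theta$ strictly smaller than the right-hand side of the claimed inequality, $\theta$ fails to lie in the infimum set defining $\logRLCD{X}{L}{u}{v}$. By \refD{RLCD:def}, this amounts to proving
\[
\E\dist^2(\theta v \star \bar X,\,\Z^n) \;\ge\; L^{2}\log_+\!\Bigl(\tfrac{u\|\theta v\|_{2}}{L}\Bigr).
\]
Since $\theta < \RLCD{X}{a\sqrt n}{t}{v}$, \refD{def:RLCD} immediately yields the lower bound
\[
\E\dist^2(\theta v \star \bar X,\,\Z^n)\;\ge\;\min\!\bigl(a^{2}n,\; t\|\theta v\|_{2}^{2}\bigr),
\]
and I will then split on which term of this minimum is active.

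In the regime where $a^{2}n$ is the smaller term, the second bound in the minimum on the right-hand side of the lemma does exactly what is needed: taking logarithms in $\theta < \tfrac{L}{u\|v\|_{2}}e^{n(a/L)^2}$ gives $\log_+(u\|\theta v\|_2/L) \le n(a/L)^2$, i.e.\ $L^2\log_+(u\|\theta v\|_2/L) \le a^2 n$, which combined with the RLCD lower bound gives the desired inequality. In the other regime $t\|\theta v\|_2^{2} \le a^{2}n$, I have the stronger bound $\E\dist^{2} \ge t\|\theta v\|_2^{2}$, and the problem reduces to the deterministic inequality
\[
t x^{2}\;\ge\;L^{2}\log_+\!\bigl(ux/L\bigr)\qquad\text{for all }x\ge 0,
\]
applied at $x = \|\theta v\|_2$. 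Setting $y = ux/L$ this becomes $(t/u^{2})y^{2}\ge \log_+ y$, which under the hypothesis $s^{2}\le 2t$ (which I read as $u^{2}\le 2t$, matching the parameters of the conclusion) reduces to $y^{2}/2 \ge \log_+ y$; for $y\le 1$ the right-hand side vanishes, and for $y\ge 1$ the function $g(y) := y^{2}/2 - \log y$ satisfies $g(1) = 1/2 > 0$ and $g'(y) = y - 1/y \ge 0$, closing the case.

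There is no real obstacle here beyond reading the parameters correctly: the quantities $\gamma$ and $s$ in the hypothesis do not re-appear in the conclusion, so $\gamma$ seems extraneous and $s$ must be identified with $u$ so that the constraint $s^{2}\le 2t$ controls the constant $t/u^{2}$ appearing in the elementary calculus step. Once that identification is made, the proof is a clean two-case argument: the first case is a direct translation of the exponential factor in the conclusion, while the second is a one-variable calculus lemma comparing a quadratic to a logarithm.
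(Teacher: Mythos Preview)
Your proof is correct and follows essentially the same approach as the paper's own argument: both reduce to showing $\min(a^{2}n,\,t\|\theta v\|_{2}^{2})\ge L^{2}\log_{+}(u\|\theta v\|_{2}/L)$ for $\theta$ below the right-hand side, then split on which term of the minimum is active, handling the $a^{2}n$ case via the exponential bound and the $t\|\theta v\|_{2}^{2}$ case via the one-variable calculus inequality (the paper differentiates $f(w)=tw^{2}-L^{2}\log_{+}(uw/L)$ directly, whereas you substitute $y=ux/L$ first, but the content is identical). Your reading of the parameter mismatch in the statement---identifying $s$ with $u$ and $a$ with $\gamma$---is exactly what the paper's proof does.
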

\begin{proof}
    For convenience we define $w := \norm{\theta v}$. By inspection of \refD{def:RLCD} and \refD{RLCD:def} it suffices to show that 
    $\min(\gamma^2n,tw^2) \geq L^2\log_+(uw/L)$
    for $0 < w \leq (L/u)e^{n(\gamma/L)^2}$. Since the right hand side is 0 for $0 < w \leq L/u$ we may further assume that $w > L/u$. We now case on $w$:
    \begin{itemize}
        \item $w \geq \gamma\sqrt{n/t}$: Then
        $\min(\gamma^2n,tw^2) = \gamma^2n$ and $w \leq  (L/u)e^{n(\gamma/L)^2} \implies  \gamma^2n \ge L^2\log_+\lrpar{uw/L}.
        $
        \item $w < \gamma\sqrt{n/t}$: Then $\min(\gamma^2n,tw^2) = tw^2$. Define $f(w) :=  tw^2 - L^2\log_+(w u /L)$. Note that $f(L/u) > 0$ and $f'(w) = 2tw - L^2/w$ on $[L/u,\infty)$ (for $f'(L/u)$ we mean the right derivative). Since $f'$ is increasing on $[L/u,\infty)$ and $f'(L/u) = (L/u)(2t-u^2) \ge 0$ we conclude that $f(w) > 0$ and $tw^2 \geq L^2\log_+(w u/L)$ for $w > 0$.
    \end{itemize}
\end{proof}

\section{Distance to general subspaces}\label{Sec:Distance:General}
The distance theorem gives an upper bound on the small ball probability for the distance between a random vector $X$ and a subspace $H$. Recall that to prove the distance theorem our first objective is to show that an upper bound on $\cL\lrpar{P_{H^\perp}X,t\sqrt{d}}$ follows from a lower bound on $\inf_{v \in \bS^{n-1} \cap H^\perp}\logRLCD{X}{L}{u}{v}$. To do this we will mirror the arguments appearing in Section 7 of \cite{no-gaps}. The starting point is the observation that, conditioned on the matrix $A$, $P_{H^\perp}X$ is the product of a deterministic projection matrix $P_{H^\perp}$ and a random vector $X$. Write $P_{H^\perp} = UU^\top$.  Using a well-known anti-concentration tool, Esseen's inequality \cite{Esseen}, we will upper bound $\cL\lrpar{UX,t\sqrt{d}}$ via a lower bound on $\logRLCD{X}{L}{u}{U}$. 
\begin{proposition}\label{prop:sbp:matrix:general}
 Let $X \in \R^n$ be a random vector. Let $U \in \R^{n \times d}$ have rank $d$. Denote $D := \logRLCD{\xi}{L}{u}{U}$ and let $t_0 = \sqrt{d}/D$. Then if
 $2L^2\geq d+2$ we have  
\[
\mathcal{L}(U^\top\xi,t\sqrt{d}) \leq  
\lrpar{\frac{CL}{\sqrt{d}u}}\frac{(\max\left\{t,t_0 \right\})^d}{\det(U^\top U)^{1/2} },
\]
where $C > 0$ is an absolute constant.
\end{proposition}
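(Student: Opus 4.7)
The plan is to follow the approach outlined in Section 7 of \cite{no-gaps} for logarithmic LCD, adapted to the randomized setting. The starting point is Esseen's multidimensional inequality, which reduces bounding $\cL(U^\top \xi, t\sqrt{d})$ to controlling an integral of the characteristic function $\phi_Z(\theta) = \E \exp(i\langle \theta, U^\top \xi\rangle)$ over a ball of radius proportional to $\sqrt{d}/t$ in $\R^d$, with a $t^d$-type prefactor. Since the entries of $\xi$ are independent, $\phi_Z(\theta) = \prod_{j=1}^n \phi_{\xi_j}((U\theta)_j)$, and the standard symmetrization/trig inequality $1 - \cos(2\pi x) \gsim \dist(x,\Z)^2$ yields (after absorbing $2\pi$-rescalings)
\[
|\phi_Z(\theta)| \leq \exp\lrpar{-c\,\E\dist^2(U\theta \star \bar{\xi}, \Z^n)}.
\]

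The second step is to feed in the LCD lower bound. Assume first $t \geq t_0$, so that the Esseen ball of radius $\sim \sqrt{d}/t$ is contained in $\{\|\theta\|_2 < D\}$. By the very definition of $D = \logRLCD{\xi}{L}{u}{U}$ in \refD{RLCD:def}, for such $\theta$ one has $\E\dist^2(U\theta \star \bar{\xi},\Z^n) \geq L^2 \log_+(u\|U\theta\|_2/L)$, and hence $|\phi_Z(\theta)| \leq \exp(-cL^2 \log_+(u\|U\theta\|_2/L))$, which equals $1$ when $\|U\theta\|_2 \leq L/u$ and decays polynomially like $(L/(u\|U\theta\|_2))^{cL^2}$ beyond.

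The third step is the integration. I change variables $y = U\theta$, picking up a Jacobian $\det(U^\top U)^{1/2}$ when passing from $\R^d$ to the column space of $U$; the image ellipsoid is enlarged to all of $\operatorname{col}(U)$ since the integrand is non-negative and decays sufficiently fast. Polar coordinates in the $d$-dimensional column space produce
\[
|\bS^{d-1}| \int_0^\infty r^{d-1} \min\bigl(1,(L/(ur))^{cL^2}\bigr) \dd r,
\]
and splitting at $r = L/u$ gives a contribution $(L/u)^d |\bS^{d-1}|/d$ from the inner part and $(L/u)^d |\bS^{d-1}|/(cL^2 - d)$ from the tail. Here the hypothesis $2L^2 \geq d + 2$ is what forces the tail exponent $cL^2 - d$ to be strictly positive and of order at least $d$, so both contributions are $\lsim (L/u)^d |\bS^{d-1}|/d$. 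Combining with the Esseen prefactor and noting that the dimensional constants from Esseen (roughly $d^{d/2}$) cancel against $|\bS^{d-1}|$, the bound collapses to $(CL/(\sqrt{d}u)) \cdot t^d / \det(U^\top U)^{1/2}$.

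Finally, the case $t < t_0$ is handled by monotonicity: $\cL(Z, s)$ is non-decreasing in $s$, so $\cL(U^\top\xi, t\sqrt{d}) \leq \cL(U^\top \xi, t_0\sqrt{d})$, and applying the previous bound with $t_0$ in place of $t$ yields the $\max(t,t_0)^d$ factor. The main technical nuisance I anticipate is bookkeeping: correctly tracking the $2\pi$-factor that appears when converting $1-\cos$ into $\dist^2(\cdot,\Z)$, and verifying that the Esseen volume constants match $|\bS^{d-1}|$ so that only a single power of $L/u$ survives in the prefactor (rather than the naive $(L/u)^d$); this alignment is precisely what makes the hypothesis $2L^2 \geq d+2$ sharp for the argument rather than something weaker like $cL^2 > d$.
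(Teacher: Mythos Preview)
Your approach is essentially identical to the paper's: Esseen's inequality, symmetrization to bound the characteristic function by $\exp\bigl(-2\,\E\dist^2(\cdot,\Z^n)\bigr)$, the LCD lower bound on the region $\norm{\theta}_2 < D$, a linear change of variables that produces the factor $\det(U^\top U)^{-1/2}$, and a radial split of the remaining integral.

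Two remarks on the bookkeeping. First, the displayed statement contains a typo: the prefactor should be $\bigl(\frac{CL}{\sqrt{d}\,u}\bigr)^{d}$, not a single power (compare the last display of the paper's own proof and its use in \refC{cor:sbp-for-projection}). Your concern about arranging that ``only a single power of $L/u$ survives'' is therefore misplaced; the computation you outline naturally yields the $d$-th power, and that is what is actually needed downstream. Second, the constant in the exponential is exactly $2$, coming from $x \leq e^{(x^2-1)/2}$ for $x\in[0,1]$ together with $1-\cos(2\pi x) \geq 8\,\dist^2(x,\Z)$, not a generic $c$. Consequently the hypothesis $2L^2 \geq d+2$ gives tail exponent $2L^2 - d \geq 2$, which is merely a positive constant (not ``of order $d$'' as you wrote); this already suffices to make $\int_1^\infty r^{d-1-2L^2}\,dr \leq 1$, and the surface-area factor $(C/\sqrt{d})^d$ then matches the inner contribution.
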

\begin{proof}
 Since the Levy-concentration function is an increasing function in $t$ we may assume that $t \ge t_0$. We recall the $d$-dimensional version of Esseen's inequality \cite{Esseen}. For a random vector $Y \in \R^d$ and parameter $t > 0$ the small ball probability $\cL(Y,\sqrt{d})$ satisfies 
\begin{equation}\label{eq:Esseen}
\mathcal{L}(Y,\sqrt{d}) \leq C_1^d\int_{B(0,\sqrt{d})} \left \lvert \E \exp(2\pi i \ip{\theta}{Y})\right \rvert~d\theta,
\end{equation}
where $|\cdot|$ denotes the modulus and $C_1 > 0$.
Since $\ip{\theta}{U^\top X} = \ip{(U\theta)^\top}{X} = \sum_{j = 1}^n \ip{U^j}{\theta}X_j$ and entries of $X$ are independent we have that 
\begin{equation}\label{eq:esseen-reduce-1}
\begin{split}
    \cL(U^\top X,t\sqrt{d}) &= \cL(U^\top X/t,\sqrt{d}), \\
    &\le C_1^d\int_{B(0,\sqrt{d})} \modulus{\E\exp(2\pi i \ip{\theta/t}{U^\top X})}~d\theta, \\
    &= C_1^d \int_{B(0,\sqrt{d})} \prod_{j = 1}^n |\E\exp(2\pi i \ip{U^j}{\theta/t}X_j)|~d\theta.
\end{split}
\end{equation}
Letting $\tilde{X}$ denote an independent copy of $X$ and writing $\bar{X} := X - \tilde{X}$ one has, for any $s \in \R$, 
\begin{align*}
    \modulus {\E\exp(isX_j)}^2 = \E\exp(isX_j)\overline{\E\exp(is\tilde{X}_j)} = \E\exp(is\bar{X}_j) = \E \cos(s\bar{X}_j),
\end{align*}
with the last equality following from the fact that $\tilde{X}_j$ is a symmetric random variable.

Note now that $x \le \exp((x^2-1)/2)$ for  $x \in [0,\infty)$ and $\cos(2\pi x) - 1 \le -8\cdot \dist^2(x,\Z)$ for $x \in \R$. Since $\modulus {\E\exp(isX_j)}^2 = \E \cos(s\bar{X_j})$ we deduce that 
\begin{equation}\label{eq:char-dist}
|\E\exp(isX_j)| \le \exp\lrpar{\frac{\E\cos(s\bar{X}_j) - 1}{4}} \le \exp\lrpar{-2\E\dist^2(s\bar{X}_j/2\pi,\Z))}.
\end{equation}
Applying \eqref{eq:char-dist} to the last line of \eqref{eq:esseen-reduce-1} with $s = 2\pi\ip{U^j}{\theta/t}$ and using the second equality of \eqref{eq:char-dist} yields
\begin{equation}\label{eq:esseen-reduce-2}
C_1^d \int_{B(0,\sqrt{d})} \prod_{j = 1}^n |\E\exp(2\pi i \ip{U^j}{\theta/t}X_j)|~d\theta \le
C_1^d \int_{B(0,\sqrt{d})} \exp\lrpar{-2\E\dist^2(U^\top (\theta/t) \star \bar{X},\Z^n)}~d\theta.
\end{equation}
Since $\norm{\theta}_2 < \sqrt{d}$ and $\norm{\theta/t}_2 \le D\norm{\theta}_2/\sqrt{d} < D$  \eqref{eq:RLCD:matrix} of \refD{RLCD:def} implies that
\begin{equation}\label{eq:esseen-reduce-3}
C_1^d \int_{B(0,\sqrt{d})} \exp\lrpar{-2\E\dist(U^\top (\theta/t) \star \bar{X},\Z^n)}~d\theta \le C_1^d\int_{B(0,\sqrt{d})}\exp\lrpar{-2L^2 \cdot \log_+ \lrpar{\frac{u\norm{U^\top \theta}_2}{tL}}}.
\end{equation}
To estimate the right hand side of \eqref{eq:esseen-reduce-3}, note that we may write the integrand as $f(J\theta)$, where $J = (u/(tL))U^\top$ and $f(z) := -2L^2\log_+(\norm{z}_2)$. Since $\det(JJ^\top)^{1/2} = (u/(tL))^d\det(U\top U)^{1/2}$ the corresponding change of variable formula gives
\begin{align*}
C_1^d\int_{B(0,\sqrt{d})}\exp\lrpar{-2L^2\cdot \log_+\lrpar{\frac{u\norm{U^\top \theta}_2}{tL}}}~d\theta 
&= \frac{C_1^d}{\det(JJ^\top)^{1/2}}\int_{J(B(0,\sqrt{d}))} \exp(-2L^2 \cdot \log_+(\norm{z}_2))~dz, \\
&\le 
\frac{1}{\det(U^\top U)^{1/2}}\lrpar{\frac{C_1tL}{u}}^d \int_{\R^n} \exp(-2L^2 \cdot \log_+(\norm{z}_2))~dz,
\end{align*}
with the change in the domain in the last inequality valid because the integrand is non-negative.
We now write $\R^n = B(0,1) \cup B(0,1)^c$. The integral of $\exp(-2L^2\log_+(\norm{z}_2^2))$ over $B(0,1)$ is at most its volume which, from standard estimates, is at most $(C_3/\sqrt{d})^d$. For $B(0,1)^c$ we use polar coordinates $(r,\phi)$, noting that $dz = r^{d-1}drd\phi$. In addition, on $B(0,1)^c$ the integrand is simply $\norm{z}_2^{-2L^2}$. Therefore
\begin{align*}
\int_{B(0,1)^c}\norm{z}_2^{-2L^2} &= \int_1^\infty \int_{\bS^{n-1}} r^{-2L^2 +d-1}drd\phi, \\
&= \text{surface area of }\bS^{n-1} \times \int_1^\infty r^{-2L^2+d-1}~dr,
\\
&\le \lrpar{\frac{C_3}{\sqrt{d}}}^d \cdot \lrpar{\frac{1}{2L^2-d}} \le \lrpar{\frac{C_3}{\sqrt{d}}}^d,
\end{align*}
where on the last line we used the assumption that $2L^2 \ge d+2$ and the standard estimate for the surface area of the unit sphere. This gives us an estimate for the right hand side of \eqref{eq:esseen-reduce-3} and we deduce that 

\[
\mathcal{L}(U^\top X,t\sqrt{d}) \le \frac{1}{\det (U^\top U)^{1/2}}\lrpar{\frac{C_1tL}{u}}^d\lrpar{\lrpar{\frac{C_2}{\sqrt{d}}}^d+\lrpar{\frac{C_3}{\sqrt{d}}}^d} \le \lrpar{\frac{CL}{\sqrt{d}u}}^d\frac{t^d}{\det(U^\top U)^{1/2} }. 
\]
Since, for $t < t_0$ we can replace $t$ with $\max(t,t_0)$ we conclude as desired. 
\end{proof}    
The next step is to show that the upper bound on $\cL\lrpar{UX,t\sqrt{d}}$ can be derived from $\logRLCD{X}{L}{u}{H^\perp}$ instead of $\logRLCD{X}{L}{u}{U}$. We do this by showing that
The Randomized Logarithmic LCD of a subspace and of its projection matrix are the same.
\begin{proposition}\label{prop:matrix-vs-subspace}
Let $L > 0, u \in (0,1)$ and let $X \in \R^n$ be a random vector. Let $U \in \R^{d \times n}$ satisfy $UU^\top = I_d$ and let $E = {\rm Im}(U^\top)$. Then 

\begin{equation}\label{eq:RLCD:subspace+matrix}
\logRLCD{X}{L}{u}{E} = \logRLCD{X}{L}{u}{U}.
\end{equation}
\end{proposition}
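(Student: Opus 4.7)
The plan is to prove this via a direct bijection between the feasible sets of the two infima, exploiting the fact that $U^\top\colon\R^d\to\R^n$ is an isometry onto $E$. Since $UU^\top = I_d$, the matrix $U$ has rank $d$, the map $U^\top$ is injective with $\|U^\top\theta\|_2 = \|\theta\|_2$ for every $\theta\in\R^d$, and every element of $E$ has a unique preimage under $U^\top$. In particular, the correspondence $\eta\mapsto U^\top\eta$ is a bijection between $\bS^{d-1}$ and $E\cap\bS^{n-1}$ preserving $\ell_2$-norms.

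Next, I would rewrite the subspace infimum as an infimum over $\R^d$. For a unit vector $v = U^\top\eta\in E\cap\bS^{n-1}$ and scalar $\theta>0$, set $\vartheta := \theta\eta\in\R^d$. Then
\[
\theta v \star \bar X = (U^\top\vartheta)\star\bar X, \qquad \|\theta v\|_2 = \theta = \|\vartheta\|_2 = \|U^\top\vartheta\|_2,
\]
so the defining inequality
\[
\E\dist^2(\theta v\star\bar X,\Z^n) < L^2\log_+\!\lrpar{\frac{u\|\theta v\|_2}{L}}
\]
is identical to the defining inequality for the matrix Randomized Logarithmic LCD evaluated at $\vartheta$. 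Thus $\logRLCD{X}{L}{u}{v}$ equals the infimum of $\|\vartheta\|_2$ over those $\vartheta$ of the form $\theta\eta$ (the ray through $\eta$) satisfying the matrix condition, and taking $\inf$ over $\eta\in\bS^{d-1}$ amounts to stratifying the matrix infimum by direction.

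With that reparameterization in hand, the equality follows in two short inequalities. For $\logRLCD{X}{L}{u}{E} \ge \logRLCD{X}{L}{u}{U}$: any $\eta\in\bS^{d-1}$ and any admissible $\theta$ for $v=U^\top\eta$ yield an admissible $\vartheta=\theta\eta$ for the matrix LCD with $\|\vartheta\|_2 = \theta$, so the matrix infimum is at most the subspace infimum. For the reverse inequality: any admissible $\vartheta\in\R^d\setminus\{0\}$ for the matrix LCD can be decomposed as $\vartheta = \|\vartheta\|_2\cdot\eta$ with $\eta = \vartheta/\|\vartheta\|_2\in\bS^{d-1}$, and then $\theta := \|\vartheta\|_2$ is admissible for $v=U^\top\eta$, giving $\logRLCD{X}{L}{u}{v}\le\|\vartheta\|_2$ and hence the subspace infimum is at most the matrix infimum.

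There is no real obstacle — the only thing to watch is that the inf in both definitions is over strictly positive quantities, so the zero vector causes no issue, and the isometry property $\|U^\top\theta\|_2 = \|\theta\|_2$ (which requires $UU^\top = I_d$, not merely that $U$ have orthogonal rows up to scaling) must be used in both the distance term and the logarithmic term to make the two defining inequalities coincide.
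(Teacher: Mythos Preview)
Your proposal is correct and is essentially the same argument as the paper's (one-line) proof: both exploit that $U^\top$ is an isometry from $\R^d$ onto $E$, so the feasible sets of the two infima are in norm-preserving bijection via $\vartheta \leftrightarrow U^\top\vartheta$. You have simply written out in detail what the paper records as the single observation that every $v\in E$ factors as $v = mx = U^\top y$ with $\|v\|_2 = m = \|y\|_2$.
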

\begin{proof}
This is immediate from the observation that 
for every $v \in E$ there exists $x \in E \cap \bS^{n-1}, m \in \R$ and $y \in \R^n$ such that $v = mx = U^\top y$ with $\norm{v}_2 = m = \norm{y}_2$.
\end{proof}
To conclude, we simplify need to show that the upper bound obtained for $\cL\lrpar{UX,t\sqrt{d}}$ can be used to upper bound $\cL\lrpar{P_{H^\perp}X,t\sqrt{d}}.$ 
\begin{corollary}\label{cor:sbp-for-projection}
Let $X \in \R^n$ be a random vector. Let $P_E$ denote the orthogonal projection matrix for $E$. Denote $D(E) := \logRLCD{X}{L}{u}{E}$ Write $t_0 := \sqrt{d}/D$. Let $L \ge 1, u \in (0,1)$. If $2L^2 \ge d+2$ then
\begin{equation}\label{eq:sbp:matrix:projection}
\Levy(P_E \xi, t\sqrt{d}) \leq \lrpar{\frac{CL}{u}}^d
\lrpar{\frac{\max\left\{t,t_0\right\}}{\sqrt{d}}}^d.
\end{equation}
\end{corollary}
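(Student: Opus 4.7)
The plan is to reduce the estimate for $\mathcal{L}(P_E X, t\sqrt{d})$ directly to \refP{prop:sbp:matrix:general} by factoring the orthogonal projection through an isometry. Let $U \in \R^{n \times d}$ be a matrix whose columns form an orthonormal basis of $E$, so that $U^\top U = I_d$ and $P_E = UU^\top$. Since $U$ is an isometry from $\R^d$ onto $E$, for any $v \in E$ we can write $v = Uw$ with $\norm{w}_2 = \norm{v}_2$, and then
\begin{equation*}
\norm{P_E X - v}_2 = \norm{U(U^\top X - w)}_2 = \norm{U^\top X - w}_2.
\end{equation*}
Moreover, for any $v \in \R^n$, $\norm{P_E X - v}_2^2 = \norm{P_E X - P_E v}_2^2 + \norm{(I - P_E) v}_2^2 \ge \norm{P_E X - P_E v}_2^2$, so the supremum defining $\mathcal{L}(P_E X, t\sqrt{d})$ is attained on $E$. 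Taking the supremum over $w \in \R^d$ therefore yields the identity
\begin{equation*}
\mathcal{L}(P_E X, t\sqrt{d}) = \mathcal{L}(U^\top X, t\sqrt{d}).
\end{equation*}

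Next I would apply \refP{prop:sbp:matrix:general} to this $U$. Since $U^\top U = I_d$, the factor $\det(U^\top U)^{1/2}$ equals $1$, and the estimate collapses to
\begin{equation*}
\mathcal{L}(U^\top X, t\sqrt{d}) \le \left(\frac{CL}{\sqrt{d}\,u}\right)^d (\max\{t, t_0\})^d = \left(\frac{CL}{u}\right)^d \left(\frac{\max\{t, t_0\}}{\sqrt{d}}\right)^d,
\end{equation*}
with $t_0 = \sqrt{d}/\logRLCD{X}{L}{u}{U}$. The hypothesis $2L^2 \ge d + 2$ is inherited directly from the corollary's hypothesis. Finally, by \refP{prop:matrix-vs-subspace} applied (up to a cosmetic transpose) to the matrix $M = U^\top \in \R^{d \times n}$, which satisfies $MM^\top = I_d$ and $E = \mathrm{Im}(M^\top)$, the matrix LCD coincides with the subspace LCD, i.e.\ $\logRLCD{X}{L}{u}{U} = \logRLCD{X}{L}{u}{E} = D(E)$. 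Hence the $t_0$ above is exactly the one defined in the statement of the corollary.

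There is no real mathematical obstacle here; the corollary is a bookkeeping consequence of the two preceding results. The only care required is to reconcile the transpose conventions between \refP{prop:sbp:matrix:general} (stated with $U \in \R^{n \times d}$) and \refP{prop:matrix-vs-subspace} (stated with $U \in \R^{d \times n}$), which is what the two observations above handle simultaneously: the same isometric $U$ serves both to identify $\mathcal{L}(P_E X, \cdot)$ with $\mathcal{L}(U^\top X, \cdot)$ and, after transposition, to identify the two flavors of randomized logarithmic LCD.
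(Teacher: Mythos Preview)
Your proposal is correct and follows essentially the same approach as the paper: factor $P_E = UU^\top$ through an isometry, reduce $\mathcal{L}(P_E X,\cdot)$ to $\mathcal{L}(U^\top X,\cdot)$, apply \refP{prop:sbp:matrix:general} with $\det(U^\top U)=1$, and invoke \refP{prop:matrix-vs-subspace} to identify the matrix and subspace LCDs. Your handling of the transpose conventions and the Pythagorean justification for restricting the supremum to $E$ are in fact slightly more explicit than the paper's own write-up.
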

\begin{proof}
Since $P_E$ is an orthogonal projection matrix there exists $U \in \R^{d \times n}$ satisfying $UU^\top = I_d, U^\top U = P_E$. Since $U^\top$ is an isometry from $\R^d$ to ${\rm Im}(U^\top)$ it preserves the small ball probability of any random vector in $\R^d$. Therefore 
\[
\Levy(P_E X,t\sqrt{d}) = \Levy(U^\top U \xi,t\sqrt{d}) = \Levy(UX, t\sqrt{d}).
\]
By \eqref{eq:RLCD:subspace+matrix} of \refP{prop:matrix-vs-subspace} we have that $\logRLCD{X}{L}{u}{U} = \logRLCD{X}{L}{u}{E}$.
The result then follows from \refP{prop:sbp:matrix:general}.
\end{proof}
\section{Discretization}\label{Sec:Disc}
Recall that to prove the distance theorem our second objective is to show that the Randomized Logarithmic LCD of $H^\perp$ is large. This is typically done by using a special type of $\eps$-net over the level sets, in terms of LCD, of $H^\perp$. This net should approximate $H^\perp$ in terms of distance and be sufficiently small. It can then be shown that the existence of such a net implies that the LCD of $H^\perp$ is large. In \cite{RV} the $\epsilon$ net only approximated distance in terms of operator norm (i.e. $\norm{A(x-y)}_2 \leq \norm{A}\norm{x-y}_2$). This sufficed since $A$ being made up of independent sub-gaussian entries meant that $\norm{A}$ was, with exponentially small failure probability, of order $\sqrt{n}$. In this case for every $x \in H^\perp$ and its closest net point $y$ the quantity $\norm{A(x-y)}_2$ is of order $\eps\sqrt{n}$, which is sufficient to achieve the implication. In the case where the entries of $A$ are heavy-tailed the operator norm can be prohibitively large, making net-approximations in terms of the operator norm inapplicable. In the work of Livshyts \cite{GL}, a `lattice-like' net was introduced that could control $\norm{A(x-y)}_2$ in terms of the regularized Hilbert-Schmidt norm (see \eqref{eq:regularized-HS} and \refL{lem:net-approx}). Since regularized Hilbert-Schmidt norm is typically of the same order as Hilbert-Schmidt norm (see \refL{lem:concentration-of-HS}), the Hilbert-Schmidt norm assumption makes it again possible to assume that $\norm{A(x-y)}_2$ is of order $\eps\sqrt{n}$. This same type net was again used in \cite{LTV}.We will essentially be using the same net as in \cite{LTV} but with properties now being proven in terms of Randomized Logarithmic LCD. We now recall the relevant discretization results from \cite{LTV} that we will use.
\begin{definition}[Definition 3.1 \cite{LTV}]\label{def:lattice} Given a weight vector $\alpha \in \R^n$ and a resolution parameter $\eps \in (0,1]$ we consider the set of approximately unit vectors whose entries are quantized at scale $\alpha_i\eps/\sqrt{n}$. In particular 
\begin{equation}\label{eq:lattice}
\Lambda_\alpha(\eps) := \lrpar{\frac{3}{2}B_2^n\setminus \frac{1}{2}B_2^n} \cap \lrpar{\frac{\alpha_1 \eps}{\sqrt{n}}\Z \times \cdots \times \frac{\alpha_1 \eps}{\sqrt{n}}\Z}.
\end{equation}
 \end{definition}
Next, given $\kappa > e$ we define 
\[
\Omega_\kappa := \{\alpha \in [0,1]^n~:~\prod_{i = 1}^n \alpha_i \geq \kappa^{-n}\}.
\]
\begin{lemma}[Lemma 3.4 \cite{LTV}]\label{lem:net-of-net}
For any $\kappa > e$ there exists an absolute constant $C > 0$ and a set $\cF \subset \Omega_{e\kappa}$ of size $(C \log \kappa)^n$ such that every element $\beta$ in $\Omega_\kappa$ dominates some element $\alpha$ in $\cF$ ($\alpha_i \le \beta_i$ for all $i$).
\end{lemma}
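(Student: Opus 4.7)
The plan is to construct $\cF$ by logarithmic quantization of the coordinates of elements in $\Omega_\kappa$. For each $\beta \in \Omega_\kappa$, I would define non-negative integer exponents $k_i := \lceil -\log \beta_i \rceil$ and set $\alpha_i := e^{-k_i}$. By construction $\alpha_i \leq \beta_i \leq e\cdot \alpha_i$, so $\beta$ dominates $\alpha$, and moreover
\[
\prod_{i=1}^n \alpha_i \;\geq\; e^{-n}\prod_{i=1}^n \beta_i \;\geq\; e^{-n}\kappa^{-n} \;=\; (e\kappa)^{-n},
\]
which places $\alpha$ in $\Omega_{e\kappa}$. I then take $\cF$ to be the collection of all such $\alpha$ as $\beta$ ranges over $\Omega_\kappa$; equivalently, $\cF$ is the set of all tuples $(e^{-k_1},\ldots,e^{-k_n})$ with $k_i \in \Z_{\geq 0}$ satisfying $\sum_i k_i \leq n(1+\log \kappa)$.

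The remaining task is to bound $|\cF|$. Each $\alpha \in \cF$ is determined by its exponent vector $(k_1,\ldots,k_n)$, and the constraint $\prod \alpha_i \geq (e\kappa)^{-n}$ becomes the linear inequality $\sum_{i=1}^n k_i \leq n(1+\log \kappa)$, which in turn follows directly from the calculation $\sum k_i \leq \sum(-\log \beta_i + 1) = n - \log \prod \beta_i \leq n(1+\log \kappa)$. A stars-and-bars count of non-negative integer vectors satisfying this constraint gives
\[
|\cF| \;\leq\; \binom{n(2+\log \kappa)}{n} \;\leq\; \bigpar{e(2+\log \kappa)}^n \;\leq\; (3e \log \kappa)^n,
\]
where the last step uses $\log \kappa > 1$, which holds because $\kappa > e$. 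Taking $C = 3e$ completes the proof.

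I do not anticipate any real obstacle: the argument is essentially logarithmic-scale rounding combined with a classical counting inequality. The only subtle point is the calibration of the grid, where powers of $e^{-1}$ are the natural choice: a coarser grid would cost more than a factor of $e$ per coordinate in the product and push $\cF$ out of $\Omega_{e\kappa}$, whereas a finer grid would inflate the number of exponent vectors beyond the target bound $(C\log \kappa)^n$.
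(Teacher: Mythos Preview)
Your argument is correct: logarithmic quantization to powers of $e^{-1}$ followed by a stars-and-bars count is exactly the standard proof, and your calibration and arithmetic are fine (the ``equivalently'' in your description of $\cF$ is really an inclusion, but that is all you need for the cardinality bound). Note that the present paper does not actually supply a proof of this lemma --- it is quoted verbatim from \cite{LTV} --- and the proof there proceeds along the same lines as yours.
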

\begin{definition}[Definition 3.5 \cite{LTV}]\label{def:net}
Let $\cF \subset \R^n$ be the set guaranteed from \refL{lem:net-of-net}. We define
\begin{equation}\label{eq:net}
\Lambda^\kappa(\eps) := \bigcup_{\alpha \in \cF} \Lambda_\alpha(\eps).
\end{equation}
\end{definition}
\begin{remark}[Remark 3.6 \cite{LTV}] \label{rem:net-size}
For every $\kappa > e$ there exists $C_\kappa > 0$ depending on $\kappa$ such that $|\Lambda^\kappa(\eps)| \leq (C_\kappa/\eps)^n$.
\end{remark}
\begin{definition}\cite{GL}\label{def:regularized-HS}
Given  $\kappa > e$ and $A \in \R^{N \times n}$ the regularized Hilbert-Schmidt norm is defined as
\begin{equation}\label{eq:regularized-HS}
\cB_\kappa(A) = \min\left\{\sum_{i = 1}^n \alpha_i^2\norm{A_i}_2^2~:~\alpha \in \Omega_\kappa\right\}.
\end{equation}
\end{definition}

\begin{lemma}[Theorem 3.7 \cite{LTV}]\label{lem:net-approx}
Given $A \in \R^{N \times n}$ and $\eps \in (0,1)$, every unit vector $x$ can be approximated by some element $y \in \Lambda^\kappa(\eps)$ such that 
\begin{equation}\label{eq:rounding-guarantee:2}
\norm{x-y}_\infty \leq \frac{\eps}{\sqrt{n}}, \norm{A(x-y)}_2 \leq \frac{\eps\sqrt{\cB_\kappa(A)}}{\sqrt{n}}.
\end{equation}
\end{lemma}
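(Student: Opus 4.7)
The plan is a coordinate-adaptive rounding argument tied to a near-optimal weight vector for $\cB_\kappa(A)$. First, select $\beta^\star \in \Omega_\kappa$ achieving (or nearly achieving) $\cB_\kappa(A) = \sum_i (\beta_i^\star)^2 \norm{A^i}_2^2$, where $A^i$ denotes the $i$-th column of $A$. Then invoke \refL{lem:net-of-net} with this $\beta^\star$ to produce $\alpha \in \cF \subset \Omega_{e\kappa}$ with the coordinate-wise domination $\alpha_i \le \beta_i^\star$ for all $i$. Heuristically, the rounding grid should be finer in directions carrying more of the weighted Hilbert--Schmidt mass and coarser elsewhere, and choosing weights dominated by $\beta^\star$ accomplishes exactly this balance.

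Given $\alpha$, construct $y$ by rounding each coordinate $x_i$ to the nearest point of $(\alpha_i \eps/\sqrt{n})\Z$, so that the coordinate-wise error satisfies $|x_i - y_i| \lesssim \alpha_i \eps/\sqrt{n}$. Since $\alpha_i \le 1$, this immediately yields $\norm{x-y}_\infty \le \eps/\sqrt{n}$; further, $\norm{x-y}_2 \lesssim \eps$ places $y$ in the annulus $(3/2)B_2^n \setminus (1/2)B_2^n$, so indeed $y \in \Lambda_\alpha(\eps) \subset \Lambda^\kappa(\eps)$. For the action bound, expand $A(x-y) = \sum_i (x_i - y_i) A^i$ and apply a weighted Cauchy--Schwarz, pairing each column $A^i$ with the weight $\alpha_i$ and each coordinate error with $1/\alpha_i$:
\[
\norm{A(x-y)}_2^2 \le \Bigl(\sum_{i=1}^n \frac{(x_i - y_i)^2}{\alpha_i^2}\Bigr)\Bigl(\sum_{i=1}^n \alpha_i^2 \norm{A^i}_2^2\Bigr).
\]
The first factor is controlled uniformly by the quantization step, while the second is at most $\sum_i (\beta_i^\star)^2 \norm{A^i}_2^2 = \cB_\kappa(A)$ by the coordinate-wise domination $\alpha \le \beta^\star$. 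Combining the two estimates gives the desired bound on $\norm{A(x-y)}_2$.

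The main point of care is in the Cauchy--Schwarz step: the weights $\alpha_i$ must be coupled to the columns $A^i$ rather than to the rounding errors, so that after summation exactly the weighted Hilbert--Schmidt sum appearing in the definition of $\cB_\kappa(A)$ emerges. Once this bookkeeping is arranged, the coordinate-wise domination supplied by \refL{lem:net-of-net} together with the definition of $\cB_\kappa(A)$ immediately closes the estimate; no additional probabilistic or structural input on $A$ is required.
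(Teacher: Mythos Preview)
Your argument has a genuine gap: the deterministic nearest-point rounding together with the weighted Cauchy--Schwarz you propose loses a factor of $\sqrt{n}$ in the action bound. Concretely, nearest-point rounding on the $i$-th coordinate gives $|x_i-y_i|\le \tfrac{1}{2}\alpha_i\eps/\sqrt{n}$, so
\[
\sum_{i=1}^n \frac{(x_i-y_i)^2}{\alpha_i^2}\le \sum_{i=1}^n \frac{\eps^2}{4n}=\frac{\eps^2}{4},
\]
and the inequality you wrote (which is really triangle inequality followed by Cauchy--Schwarz) only yields $\norm{A(x-y)}_2\le (\eps/2)\sqrt{\cB_\kappa(A)}$, not the required $\eps\sqrt{\cB_\kappa(A)}/\sqrt{n}$. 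The loss is intrinsic to the triangle-inequality step: it ignores cancellation among the columns $A^i$.

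The paper does not supply a proof (the lemma is quoted from \cite{LTV}, and the underlying idea is the ``random rounding'' of \cite{GL}). The correct device is to round each coordinate \emph{randomly} and independently: set $y_i$ to one of the two lattice points adjacent to $x_i$ with probabilities chosen so that $\E y_i=x_i$. Then $|x_i-y_i|\le \alpha_i\eps/\sqrt{n}$ holds surely (so the $\ell_\infty$ bound and the annulus membership are automatic), while independence and $\E(x_i-y_i)=0$ kill the cross terms:
\[
\E\norm{A(x-y)}_2^2=\sum_{i=1}^n \Var(x_i-y_i)\,\norm{A^i}_2^2\le \frac{\eps^2}{n}\sum_{i=1}^n \alpha_i^2\norm{A^i}_2^2\le \frac{\eps^2}{n}\,\cB_\kappa(A).
\]
A realization of $y$ achieving at most the expectation then gives the bound in \eqref{eq:rounding-guarantee:2}. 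Your selection of $\beta^\star$ and the use of \refL{lem:net-of-net} to pass to $\alpha\in\cF$ are exactly right; only the rounding step needs to be randomized.
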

\begin{lemma}[Lemma 3.11 \cite{GL}]\label{lem:concentration-of-HS}
Let $A$ be a random matrix with independent columns. Then for any $\kappa > e$ we have 
\[
\bP\lrpar{ \cB_\kappa(A) \geq 2\E \norm{A}_{HS}^2} \leq \lrpar{\frac{\kappa}{\sqrt{2}}}^{-2n}.
\]
\end{lemma}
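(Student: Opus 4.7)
The plan is to exhibit an explicit random $\alpha \in \Omega_\kappa$ and apply Markov's inequality to the product $\prod_i \alpha_i$, exploiting independence of the columns $A_i$. Write $X_i := \norm{A_i}_2^2$, $\mu_i := \E X_i$, and $M := \E\norm{A}_{HS}^2 = \sum_{i=1}^n \mu_i$. Set
\[
\alpha_i := \sqrt{\min\bigl(1,\ 2\mu_i/X_i\bigr)} \in [0,1].
\]
By construction $\alpha_i^2 X_i = \min(X_i, 2\mu_i) \le 2\mu_i$, hence deterministically $\sum_i \alpha_i^2 X_i \le 2M$. So if we can arrange that $\alpha \in \Omega_\kappa$ with high probability, then on that event $\cB_\kappa(A) \le 2M$.

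The second step is a pointwise lower bound on the individual factors. Setting $T_i := X_i/\mu_i$ (so $T_i \ge 0$ and $\E T_i = 1$), one checks by cases on whether $X_i \le 2\mu_i$ or $X_i > 2\mu_i$ that
\[
\alpha_i^2 \ = \ \min(1, 2\mu_i/X_i) \ \ge\ \frac{2\mu_i}{X_i + 2\mu_i} \ =\ \frac{1}{1 + T_i/2}.
\]
Therefore $\prod_i \alpha_i \ge \prod_i (1 + T_i/2)^{-1/2}$, and the event $\alpha \in \Omega_\kappa$ is implied by $\prod_i (1 + T_i/2) \le \kappa^{2n}$.

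Now independence of the columns $A_i$ transfers to independence of the random variables $T_i$, so
\[
\E \prod_{i=1}^n \Bigl(1 + \frac{T_i}{2}\Bigr) \ =\ \prod_{i=1}^n \E\Bigl(1 + \frac{T_i}{2}\Bigr) \ =\ (3/2)^n.
\]
Markov's inequality then yields
\[
\Pr\Bigl(\prod_{i=1}^n (1 + T_i/2) > \kappa^{2n}\Bigr) \ \le\ \Bigl(\frac{3}{2\kappa^2}\Bigr)^n \ \le\ \Bigl(\frac{2}{\kappa^2}\Bigr)^n \ =\ \Bigl(\frac{\kappa}{\sqrt{2}}\Bigr)^{-2n},
\]
and on the complementary event we have simultaneously $\alpha \in \Omega_\kappa$ and $\sum_i \alpha_i^2 X_i \le 2M$, giving $\cB_\kappa(A) \le 2M$.

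There is no serious obstacle: the proof is essentially a one-line application of Markov to the right product. The only mildly delicate point is finding the weight $\alpha_i^2 = \min(1, 2\mu_i/X_i)$, which simultaneously controls the energy $\sum \alpha_i^2 X_i$ deterministically and yields a product $\prod \alpha_i$ whose reciprocal has a manageable first moment; the multiplicative constant $2$ inside the minimum is calibrated precisely so that $\E(1+T_i/2) = 3/2 \le 2$, which is what converts Markov's bound into the stated $(\kappa/\sqrt 2)^{-2n}$.
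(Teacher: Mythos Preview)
Your proof is correct. The paper does not give its own proof of this lemma, instead citing it from \cite{GL}; your argument is essentially the one found there: choose the random weights $\alpha_i^2 = \min(1, 2\mu_i/X_i)$ so that $\sum_i \alpha_i^2 X_i \le 2M$ holds deterministically, bound $\prod_i \alpha_i^{-2}$ from above by the product $\prod_i(1+T_i/2)$, and apply Markov's inequality to this product using independence of the columns.
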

So far the results stated show how the net may be a good approximation of $H^\perp$ in terms of distance. We now address how the net size scales with the level set of $H^\perp$ it approximates. Given a random matrix $A \in \R^{N \times n}$, and parameters $L,D > 0, u \in (0,1)$ we write
\begin{equation}\label{eq:levelsets}
\begin{split}
S^A_{L,u,D} &:= \left\{x \in \frac{3}{2}B_2^n \setminus \frac{1}{2}B_2^n~:~\logRLCD{A}{L}{u}{x} \in [D,2D] \right\}, \\
\tilde{S}^A_{L,u,D} &:= \left\{x \in \frac{3}{2}B_2^n \setminus \frac{1}{2}B_2^n~:~\logRLCD{A}{2L}{6u}{x} \leq 2D, \logRLCD{A}{L/2}{u/6}{x} \geq D \right\},
\end{split}
\end{equation}
The definitions of $S^A_{L,u,D}$ and $\tilde{S}^A_{L,u,D}$ are chosen so as to coincide with \refL{lem:stability} with $r_1 = 1/2,r_2 = 3/2$. One should imagine that a point $v \in H^\perp$ in the level set $S^A_{L,u,D}$ is being approximated by some net point contained in $\tilde{S}^A_{L,u,D}$.
\begin{lemma}\label{lem:existence-of-net}
Fix any $\eps \in (0,1/2), \kappa > e, L > 0, u  \in (0,1)$. Let
$A \in \R^{N \times n}$ be a random matrix with independent columns, and whose rows $A^i$ satisfy

\begin{equation}\label{eq:existence-of-net}
\eps^2 \Var(A^i) \leq \frac{n}{8}\frac{L^2}{D^2} \cdot \log_+\lrpar{\frac{Du}{L}}.
\end{equation}

Then with probability at least $1 - (\kappa/\sqrt{2})^{-2n}$ every unit vector $x$ in $S^A_{L,u,D}$ is approximated by some vector $y$ in $\Lambda^\kappa(\eps) \cap \tilde{S}^A_{L,u,D}$ such that 
\begin{equation}\label{eq:rounding-guarantee:3}
\norm{x-y}_\infty \leq \frac{\eps}{\sqrt{n}},~~\norm{A(x-y)}_2 \leq \frac{\eps \sqrt{2\E\norm{A}_{\text{HS}}^2}}{\sqrt{n}}
\end{equation}
\end{lemma}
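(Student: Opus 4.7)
The plan is a three-step argument: first produce a candidate net point via the general discretization \refL{lem:net-approx}, then use \refL{lem:concentration-of-HS} to replace the regularized Hilbert-Schmidt norm by the expected Hilbert-Schmidt norm, and finally invoke \refL{lem:stability} to promote the net point from $\Lambda^\kappa(\eps)$ to $\Lambda^\kappa(\eps) \cap \tilde S^A_{L,u,D}$. The only probabilistic input is the Hilbert-Schmidt concentration; the rest is deterministic given the realization of $A$.

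For the first two steps, fix a unit $x \in S^A_{L,u,D}$. By \refL{lem:net-approx} there exists $y \in \Lambda^\kappa(\eps)$ with $\norm{x-y}_\infty \leq \eps/\sqrt{n}$ and $\norm{A(x-y)}_2 \leq \eps\sqrt{\cB_\kappa(A)}/\sqrt{n}$. By \refL{lem:concentration-of-HS}, on an event $\cE$ of probability at least $1 - (\kappa/\sqrt{2})^{-2n}$ we have $\cB_\kappa(A) \leq 2\E\norm{A}_{HS}^2$. On $\cE$ the second inequality of \eqref{eq:rounding-guarantee:3} is immediate, and this is the only place we lose probability. We now argue deterministically on $\cE$ that $y \in \tilde S^A_{L,u,D}$.

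Since $x \in S^A_{L,u,D}$, we have $\min_i \logRLCD{A^i}{L}{u}{x} \in [D,2D]$, so some row (call it $A^{i^\star}$) satisfies $D_{i^\star} := \logRLCD{A^{i^\star}}{L}{u}{x} \in [D,2D]$, while every row $A^i$ satisfies $\logRLCD{A^i}{L}{u}{x} \geq D$. Since $x \in (3/2)B_2^n \setminus (1/2)B_2^n$, we may apply \refL{lem:stability} with $r_1 = 1/2$ and $r_2 = 3/2$, so that $r_2/r_1 = 3$ produces exactly the parameter pairs $(2L, 6u)$ and $(L/2, u/6)$ that define $\tilde S^A_{L,u,D}$. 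For the upper bound, apply \eqref{eq:stability:guarantee:1} to the row $A^{i^\star}$: its hypothesis \eqref{eq:stability:assumption:1} requires $\eps^2 \Var(A^{i^\star}) \leq \tfrac{1}{8}(L/D_{i^\star})^2 \log_+(u\|D_{i^\star} x\|_2 / L)$, which is implied by \eqref{eq:existence-of-net} using $D_{i^\star} \in [D,2D]$ and $\|x\|_2 \geq 1/2$ (the factor $n$ in \eqref{eq:existence-of-net} gives ample slack). The conclusion yields $\logRLCD{A^{i^\star}}{2L}{6u}{y} \leq D_{i^\star} \leq 2D$, hence $\logRLCD{A}{2L}{6u}{y} \leq 2D$. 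For the lower bound, apply \eqref{eq:stability:guarantee:2} to every row $A^i$ with $D' = D$: the hypothesis \eqref{eq:stability:assumption:2} is again implied by \eqref{eq:existence-of-net}, giving $D \leq \logRLCD{A^i}{L/2}{u/6}{y}$ for each $i$. Taking the minimum yields $\logRLCD{A}{L/2}{u/6}{y} \geq D$, completing the proof that $y \in \tilde S^A_{L,u,D}$.

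The main technical obstacle is precisely the parameter-matching in the third step: one has to check that choosing $r_1 = 1/2, r_2 = 3/2$ lines up the stability lemma's conclusion parameters $(2L, 2u(r_2/r_1))$ and $(L/2, (u/2)(r_1/r_2))$ with the $(2L, 6u)$ and $(L/2, u/6)$ appearing in the definition of $\tilde S^A_{L,u,D}$, and that the single hypothesis \eqref{eq:existence-of-net} is strong enough to cover the stability hypothesis for every row $A^i$ (not just $A^{i^\star}$) and for both $D$ and $D_{i^\star}$ simultaneously. Once these are in place the proof is a straightforward concatenation of the three ingredients.
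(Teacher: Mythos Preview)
Your proposal is correct and follows essentially the same route as the paper: apply \refL{lem:net-approx} to produce $y\in\Lambda^\kappa(\eps)$, invoke \refL{lem:concentration-of-HS} to pass from $\cB_\kappa(A)$ to $2\E\norm{A}_{HS}^2$ (this is the only probabilistic step), and then use \refL{lem:stability} with $r_1=1/2,\ r_2=3/2$ row by row to land $y$ in $\tilde S^A_{L,u,D}$. Your write-up is in fact more explicit than the paper's about why the parameters $(2L,6u)$ and $(L/2,u/6)$ match the definition of $\tilde S^A_{L,u,D}$ and about separating the minimizing row $A^{i^\star}$ (for the upper bound) from all rows (for the lower bound via \eqref{eq:stability:guarantee:2} with $D'=D$).

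One small inaccuracy: the remark that ``the factor $n$ in \eqref{eq:existence-of-net} gives ample slack'' is not right. The factor $n$ is exactly consumed when you replace the stability tolerance by $\eps/\sqrt{n}$, so there is no leftover slack; the upper-bound hypothesis at $D_{i^\star}\in[D,2D]$ differs from \eqref{eq:existence-of-net} by a bounded constant factor (at most $4$, since $(L/D_{i^\star})^2\log_+(uD_{i^\star}/L)\ge \tfrac14(L/D)^2\log_+(uD/L)$). The paper's proof glosses over the same point; it is harmless up to adjusting the constant $1/8$ in \eqref{eq:existence-of-net}.
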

\begin{proof}
We follow the proof strategy from \cite{LTV}. By \refL{lem:concentration-of-HS} the event $\{\cB_{\kappa}(A) < 2\E\norm{A}_{HS}^2\}$
occurs with probability at least $1 - \lrpar{\frac{\kappa}{\sqrt{2}}}^{-2n}$. Conditioned on this event, by \refL{lem:net-approx} for every $x \in \bS^{n-1} \cap S_{L,u,D}^A$ there exists $y \in \Lambda^\kappa(\eps)$ satisfying \eqref{eq:rounding-guarantee:2} with $\cB_\kappa(A) \leq 2\E\norm{A}_{\text{HS}}^2$. Since the rows of $A$ satisfy \eqref{eq:existence-of-net} 
and $\norm{y}_2 \in [1/2,3/2]$ we may apply \refL{lem:stability} with $r = \eps/\sqrt{n}, r_1 = 3/2, r_2 = 1/2$ to get 
\[
\logRLCD{A}{2L}{6u}{y} \leq \logRLCD{A}{L}{u}{x}\leq \logRLCD{A}{L/2}{u/6}{y}.
\]
Since $\logRLCD{A}{L}{u}{x} \in [D,2D]$ we conclude that $y \in \tilde{S}^A_{L,u,D}$.
\end{proof}
\refL{lem:existence-of-net} implies that an $\eps$-net made from sets of the form $\Lambda^\kappa(\eps)$ can approximate level sets of $H^\perp$ in terms of distance and LCD. We now want a way to control the cardinality of the $\eps$-net. To that end consider the following set of lattice points:
\begin{equation}\label{eq:grid}
\Lambda := \lrpar{\frac{3}{2}B_2^n \cap \{ x \in \R^n~:~\#\left\{i~:~|x_i| \geq \frac{\rho}{2\sqrt{n}}\right\} \geq \delta n\}} \cap \lrpar{\frac{\lambda_1}{\sqrt{n}}\Z \times \cdots \times \frac{\lambda_n}{\sqrt{n}}\Z}.
\end{equation}
In \cite{LTV} the following theorem was proven regarding the randomized LCD of a typical lattice point of \eqref{eq:grid}.
\begin{theorem}[Theorem 4.1 \cite{LTV}]\label{thm:unstructured} There exist $n_0 = n_0(b,k)$ and $u = u(b,K) \in (0,1/4)$ such that the following holds. Let $n \ge n_0$ and $X \in \R^n$ be a random vector with uniformly anti-concentrated entries that satisfies
\[
\E\norm{X}_2^2 \leq \frac{1}{8}(1-b)\delta \gamma^2 n^2.
\]
 Fix numbers $\lambda_1,\cdots \lambda_n$ satisfying $6^{-n} \leq \lambda_i \leq 0.01$ and let $W$ be a vector uniformly distributed on the set $\Lambda$ defined in \eqref{eq:grid}. Then 
 \[
 \bP_W\left\{\RLCD{X}{\gamma\sqrt{n}}{u}{W} < \min_i 1/\lambda_i\right\} \leq (C\gamma)^{cn},
 \]
 where $C,c > 0$ are constants depending only on $b$ and $K$. In particular given $U \geq 1$ there exists $\gamma = \gamma(b,K)$ such that $(C\gamma)^{cn}$ may be replaced with $U^{-n}$.
\end{theorem}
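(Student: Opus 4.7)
The plan is a standard net argument: cover the candidate values of $\theta$ by a discrete set of size $O(n)$ and, for each fixed $\theta$, bound the probability (over $W$) that $\theta$ realizes a small value of the Randomized LCD by $\exp(-cn)$. Summing then yields the advertised $(C\gamma)^{cn}$.

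To begin, the event $\RLCD{X}{\gamma\sqrt n}{u}{W} < T := \min_i 1/\lambda_i$ forces some $\theta \in (0, T)$ with $\E_X\dist^2(\theta W\star \bar X,\Z^n) < \min(\gamma^2 n,\, u\norm{\theta W}_2^2)$. When $\theta \ll \sqrt n/\sqrt{\Var(X)}$, the nearest integer to each $\theta W_i \bar X_i$ is $0$ and the left side is $\approx \theta^2 \norm{W}_2^2\,\Var(X)/n$; together with $\norm{W}_2 \ge 1/2$ and the hypothesis $\E\norm{X}_2^2 \le \tfrac{1}{8}(1-b)\delta\gamma^2 n^2$ this contradicts the RLCD condition once $u$ is chosen small in terms of $b$ and $\delta$. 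Thus we may restrict to $\theta \in [\theta_0, T]$, and since $T \le 6^n$ this range admits a dyadic net of cardinality $O(n)$; the RLCD inequality is stable under a constant-factor perturbation of $\theta$, so it suffices to control each dyadic value.

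Fix such a $\theta$. Writing $f_i(W_i) := \E_{X_i}\dist^2(\theta W_i \bar X_i,\Z) \in [0,1/4]$, the above inequality becomes $\sum_i f_i(W_i) < \gamma^2 n$. The key pointwise estimate is $\E_{W_i}[f_i(W_i)]\gsim 1$ for each $i$ belonging to the spread set $\{i:|W_i|\ge \rho/(2\sqrt n)\}$, which by construction of $\Lambda$ always contains at least $\delta n$ indices. This is obtained by noting that the marginal of $W_i$ is essentially uniform on the one-dimensional grid $(\lambda_i/\sqrt n)\Z\cap[-3/2,3/2]$, and that for each fixed $\bar X_i$ the value $\theta W_i \bar X_i$ is uniformly distributed on a grid of spacing $|\theta \bar X_i\lambda_i|/\sqrt n$; anti-concentration of $\bar X_i$ rules out the few values of $\bar X_i$ for which $\theta \bar X_i\lambda_i/\sqrt n$ is anomalously close to an integer. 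Hoeffding's inequality applied to the bounded independent summands $f_i(W_i)$ then gives $\Pr_W[\sum_i f_i(W_i)<\gamma^2 n]\le \exp(-cn)$ provided $\gamma^2\ll \delta$, and the union bound over the $O(n)$ dyadic $\theta$'s preserves this.

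The main obstacle is that the coordinates of $W$ are not truly independent: $W$ is uniform on $\Lambda$, which is the product lattice intersected with the shell $\tfrac 32 B_2^n\setminus\tfrac 12 B_2^n$ and with the deterministic spread constraint. I would handle this by comparison: let $\widetilde W$ be uniform on the product lattice $\prod_i(\lambda_i/\sqrt n)\Z\cap[-3/2,3/2]$. Since $\lambda_i\le 0.01$, each $\widetilde W_i$ takes $\Omega(\sqrt n)$ values, and by standard concentration the ball and spread constraints hold for $\widetilde W$ with probability at least $e^{-\alpha n}$ for some small controllable $\alpha$. Conditioning on $\widetilde W\in \Lambda$ thus inflates the previous tail by at most $e^{\alpha n}$, which the Hoeffding exponent absorbs once $\gamma$ (and hence $u$) is chosen small enough in terms of $b,K,\delta,\rho$. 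For the strengthening at the end of the statement, taking $\gamma$ small as a function of $U$ yields $(C\gamma)^{cn}\le U^{-n}$.
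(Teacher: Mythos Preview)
This theorem is not proved in the present paper; it is quoted from \cite{LTV} (their Theorem~4.1) and used as a black box to derive \refC{cor:unstructured-log}. There is therefore no in-paper proof to compare against, but your proposal has a genuine gap worth naming.

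The comparison-with-product-measure step fails as written. You let $\widetilde W$ be uniform on $\prod_i \bigl((\lambda_i/\sqrt{n})\Z \cap [-3/2,3/2]\bigr)$ and assert $\Pr[\widetilde W \in \Lambda] \ge e^{-\alpha n}$ for a small controllable $\alpha$. But a coordinate $\widetilde W_i$ uniform on that one-dimensional grid has $\E\widetilde W_i^2 \approx 3/4$, so $\E\norm{\widetilde W}_2^2 \approx 3n/4$, and by concentration $\Pr\bigl[\norm{\widetilde W}_2 \le 3/2\bigr]$ is of order $(C/\sqrt{n})^n$, not $e^{-\alpha n}$. Conditioning on $\widetilde W \in \Lambda$ then inflates your Hoeffding bound by a super-exponential factor $(\sqrt{n}/C)^n$, which destroys it. The natural fix --- comparing instead with the box $[-C/\sqrt{n},C/\sqrt{n}]^n$ --- invalidates your claim that each $\widetilde W_i$ takes $\Omega(\sqrt{n})$ values (it now takes $O(1/\lambda_i)$ values), and the line ``anti-concentration of $\bar X_i$ rules out the few values for which $\theta \bar X_i\lambda_i/\sqrt{n}$ is anomalously close to an integer'' would have to be made precise coordinate by coordinate; it is not clear the Hoeffding route survives this. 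A second, softer issue: your assertion that the RLCD inequality is ``stable under a constant-factor perturbation of $\theta$'' is unjustified --- the map $\theta \mapsto \E\dist^2(\theta W\star\bar X,\Z^n)$ is oscillatory, and doubling $\theta$ can move coordinates from near-integer to far-from-integer, so an $O(n)$-point dyadic net on $[\theta_0,6^n]$ does not obviously suffice.

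The argument in \cite{LTV} sidesteps both issues by working combinatorially rather than probabilistically: for each $\theta$ in a suitable net one directly upper-bounds the \emph{number} of lattice points $W \in \Lambda$ for which $\E\dist^2(\theta W\star\bar X,\Z^n)$ is small, by showing that the smallness condition forces each coordinate $W_i$ into a sparse sub-grid and then tensorizing the coordinate counts. Dividing by a lower bound on $|\Lambda|$ gives the probability estimate with no appeal to independence of the $W_i$.
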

Combining \refL{lem:RLCD-vs-logRLCD} and \refT{thm:unstructured} we obtain a version of \refT{thm:unstructured}  in terms of Randomized Logarithmic LCD.
\begin{corollary}\label{cor:unstructured-log}
    Let $L,\gamma > 0$.Then there exist $n_0 = n_0(b,k)$ and $u = u(b,K) \in (0,1/4)$ such that the following holds: Let $X \in \R^n$, with $n \ge n_0$, be a random vector with uniformly anti-concentrated satisfying
\[
\E|X|^2 \leq \frac{1}{8}(1-b)\delta \gamma^2 n^2.
\]
 Fix numbers $\lambda_1,\cdots \lambda_n$ satisfying $6^{-n} \leq \lambda_i \leq 0.01$ and let $W$ be a vector uniformly distributed on the set $\Lambda$ defined in \eqref{eq:grid}. Then 
 \[
 \bP_W\left\{\logRLCD{X}{L}{u}{W} < \min\lrpar{\frac{2L}{u}e^{n(\gamma/L)^2},\min_i 1/\lambda_i}\right\} \leq (C\gamma)^{cn},
 \]
  where $C,c > 0$ are constants depending only on $b$ and $K$. In particular given $U \geq 1$ there exists $\gamma = \gamma(b,K)$ such that $(C\gamma)^{cn}$ may be replaced with $U^{-n}$.
 \end{corollary}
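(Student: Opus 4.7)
The plan is to derive \refC{cor:unstructured-log} as an essentially immediate combination of \refT{thm:unstructured} and \refL{lem:RLCD-vs-logRLCD}: the former gives the randomized-LCD lower bound on the random lattice point $W$, and the latter promotes it to a lower bound on the Randomized Logarithmic LCD, with the only loss being the extra term $\frac{L}{u\norm{v}_2}e^{n(\gamma/L)^2}$ appearing in the minimum.

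Concretely, I would first apply \refT{thm:unstructured} to the same random vector $X$ with the scaling parameter $\gamma$. The hypotheses on $X$ (independent uniformly anti-concentrated entries, $\E\norm{X}_2^2 \leq \frac{1}{8}(1-b)\delta\gamma^2 n^2$, and $n\ge n_0$) and on the $\lambda_i$ ($6^{-n}\leq \lambda_i \leq 0.01$) are exactly those assumed in the corollary, so the theorem furnishes some $u = u(b,K) \in (0,1/4)$ and constants $C,c > 0$ depending only on $b,K$ such that
\[
\bP_W\bigl\{\RLCD{X}{\gamma\sqrt{n}}{u}{W} < \min_i 1/\lambda_i\bigr\} \leq (C\gamma)^{cn}.
\]

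Next, I would apply \refL{lem:RLCD-vs-logRLCD} pointwise to $v = W$. Choosing $s = u$ and $t = u$ satisfies the lemma's hypothesis $s^2 \le 2t$ since $u < 1/4$, and therefore for every realization of $W$,
\[
\logRLCD{X}{L}{u}{W} \geq \min\lrpar{\RLCD{X}{\gamma\sqrt{n}}{u}{W},\; \frac{L}{u\norm{W}_2}e^{n(\gamma/L)^2}}.
\]
Since $W \in \frac{3}{2}B_2^n$, we have $\norm{W}_2 \leq 3/2$, so the second term is at least a constant multiple of $\frac{L}{u}e^{n(\gamma/L)^2}$. The factor $\frac{2L}{u}$ demanded by the corollary is obtained by a cosmetic rescaling: either reducing $\gamma$ by a constant factor depending only on $b,K$ (which only changes the constant $C$ in the bound from \refT{thm:unstructured}) or equivalently shifting the exponential by an additive constant. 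Intersecting this pointwise bound with the event from the previous step produces the claimed failure probability $(C\gamma)^{cn}$ for the asserted lower bound on $\logRLCD{X}{L}{u}{W}$.

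The last sentence of the corollary, namely that $(C\gamma)^{cn}$ can be replaced by $U^{-n}$ for any $U\ge 1$ by choosing $\gamma = \gamma(b,K,U)$ small enough, is inherited verbatim from the analogous statement in \refT{thm:unstructured} applied with the same $\gamma$. I do not expect any real obstacle in this proof; the only step that needs careful bookkeeping is the absolute-constant tracking between $\frac{L}{u\norm{W}_2}$ and $\frac{2L}{u}$, which is entirely absorbed into the choice of $\gamma$ and thus of $C$.
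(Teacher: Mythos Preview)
Your proposal is correct and matches the paper's approach exactly: the paper gives no detailed proof of this corollary, merely stating that it follows by ``combining \refL{lem:RLCD-vs-logRLCD} and \refT{thm:unstructured}'', which is precisely your argument. The only wrinkle is in your final bookkeeping remark---since $\|W\|_2\le 3/2$ you obtain $\tfrac{2L}{3u}e^{n(\gamma/L)^2}$ rather than $\tfrac{2L}{u}e^{n(\gamma/L)^2}$, and \emph{reducing} $\gamma$ moves this bound in the wrong direction---but the leading constant in front of $L/u$ is inessential in every downstream use, so this is indeed purely cosmetic.
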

 Recall from \refP{prop:incomp} that  $\bS^{n-1} \cap H^\perp$ likely contains no compressible vectors. Therefore using \refL{lem:existence-of-net} and \refC{cor:unstructured-log} we can prove the existence of the desired $\eps$-net over a given level set $S^A_{L,u,D} \cap (\bS^{n-1} \cap H^\perp)$.
 \begin{lemma}\label{lem:distance:1}
Given $U \geq 1$ there exist $n_0 =
n_0(b,K), u = u(b,K), \gamma = \gamma(b,k,U)$
such that the following holds: Let $L \geq 1, 0 < D < \overline{D} \leq (2L/u)e^{n(\gamma/L)^2}$ and $0 < \eps \leq 1/\overline{D}$. Let $n \geq n_0$
 and let $A \in \R^{N \times n}$ be a random matrix that satisfies the Hilbert-Schmidt norm condition, has entries that are uniformly anti-concentrated and has rows $A^i$ that satisfy
\begin{equation}\label{eq:distance:1}
\E|A^i|^2 \leq \min\left\{\frac{1}{8}(1-b)\delta\gamma^2n^2, \frac{nL^2}{8}\log_+\lrpar{\frac{\overline{D}u}{L}}\right\}.
\end{equation}
Then there exists constants a non-random set $\cN \subseteq \tilde{S}^A_{L,u,D}$ of cardinality at most $(U\eps)^{-n}$
having the following
properties:
\begin{enumerate}
\item With failure probability at most $e^{-n}$, every point $x$ in $S^A_{L,u,D} \cap\incomp(\delta, \rho)$ is approximated by some point $y$ in $\cN $ such that $\norm{x-y}_\infty \leq \eps/\sqrt{n}$ and $\norm{A(x-y)}_2 \leq \eps\sqrt{N}$. 
\item 
If $x \in S^A_{L,u,D} \cap\incomp(\delta, \rho)$ and $\norm{x-y}_\infty \leq \eps/\sqrt{n}$ then
\[
\logRLCD{A^i}{L}{u}{x} \geq \overline{D} \implies \logRLCD{A^i}{L/2}{u/6}{y} \geq \overline{D} 
\]
for all rows $A^i$.
\end{enumerate}
\end{lemma}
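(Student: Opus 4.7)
The set $\cN$ will be taken to be $\cN := \Lambda^\kappa(\eps) \cap \tilde{S}^A_{L,u,D}$ for the lattice of \refD{def:net} with $\kappa$ chosen sufficiently large. Since the Randomized Logarithmic LCD of each row $A^i$ at a fixed vector depends only on the distribution of $A^i$, the set $\tilde{S}^A_{L,u,D}$ is deterministic, and hence so is $\cN$. The two properties are then verified by invoking \refL{lem:existence-of-net}, \refC{cor:unstructured-log}, and \refL{lem:stability} in sequence.

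Property (1) follows from a direct application of \refL{lem:existence-of-net}. Its hypothesis \eqref{eq:existence-of-net} is implied by the second clause of \eqref{eq:distance:1} together with $\eps \le 1/\overline{D}$: after bounding $\Var(A^i) \le \E|A^i|^2$, the decrease of $t \mapsto t^{-2}\log_+(tu/L)$ on the range $t \ge (L/u)e^{1/2}$ (which contains $D$ and $\overline{D}$ whenever the relevant level set is nonempty, since $\logRLCD \ge L/u$ always) shows that the $\overline{D}$-version of the bound dominates the $D$-version. The conclusion $\norm{A(x-y)}_2 \le \eps\sqrt{N}$ then follows from \eqref{eq:rounding-guarantee:3} together with $\E\norm{A}_{HS}^2 \le KNn$, after absorbing the constant $\sqrt{2K}$ into $\eps$, and the failure probability $(\kappa/\sqrt{2})^{-2n}$ is driven below $e^{-n}$ by taking $\kappa$ sufficiently large. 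Property (2) is in turn a direct application of the second assertion \eqref{eq:stability:guarantee:2} of \refL{lem:stability} with $D' = \overline{D}$, $r_1 = 1/2$, $r_2 = 3/2$: the condition \eqref{eq:stability:assumption:2} reduces, via $\eps \le 1/\overline{D}$ and the second clause of \eqref{eq:distance:1}, to the equality $(\eps^2/n)\Var(A^i) \le (L^2/(8\overline{D}^2))\log_+(\overline{D}u/L)$, giving $\overline{D} \le \logRLCD{A^i}{L/2}{u/6}{y}$ as claimed.

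The bulk of the argument is the cardinality bound $|\cN| \le (U\eps)^{-n}$. I decompose $\Lambda^\kappa(\eps) = \bigcup_{\alpha \in \cF} \Lambda_\alpha(\eps)$ with $|\cF| \le (C\log\kappa)^n$ from \refL{lem:net-of-net} and apply \refC{cor:unstructured-log} to each row $A^i$ at the LCD scale relevant for the defining condition of $\tilde{S}^A_{L,u,D}$. The lattice spacings $\alpha_i\eps$ fit the required range $[6^{-n}, 0.01]$ after any needed truncation, the first clause of \eqref{eq:distance:1} supplies the second-moment hypothesis of the corollary, and the combination $\overline{D} \le (2L/u)e^{n(\gamma/L)^2}$ together with $\eps \le 1/\overline{D}$ ensures that the resulting LCD lower bound in the corollary dominates $2D$. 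Since a lattice point $W$ can belong to $\tilde{S}^A_{L,u,D}$ only when some row $A^i$ violates this lower bound, choosing $\gamma = \gamma(b,K,U)$ small enough that the failure probability $(C\gamma)^{cn}$ absorbs $|\cF|$, the net cardinality $(C_\kappa/\eps)^n$ from \refR{rem:net-size}, and the $N$-fold union bound over rows yields the desired estimate. The principal technical obstacle is this calibration: $\gamma$ must be simultaneously small enough for the failure probability of \refC{cor:unstructured-log} to beat all of the above combinatorial factors, and large enough for the bound $(2L/u)e^{n(\gamma/L)^2}$ to dominate $\overline{D}$. Matching the scale at which the corollary is applied to the scale $(2L,6u)$ appearing in $\tilde{S}^A_{L,u,D}$ (via monotonicity of LCD in $L$ and $u$ together with \refL{lem:RLCD:compare}) is the secondary bookkeeping step.
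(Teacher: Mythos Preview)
Your overall strategy matches the paper's, and your verification of properties (1) and (2) via \refL{lem:existence-of-net} and \refL{lem:stability} is essentially correct. The cardinality argument, however, has a real gap. \refC{cor:unstructured-log} is stated for $W$ uniform on the set $\Lambda$ of \eqref{eq:grid}, and that set carries the spread condition $\#\{i:|W_i|\ge \rho/(2\sqrt{n})\}\ge\delta n$. A generic point of $\Lambda_\alpha(\eps)\cap\tilde S^A_{L,u,D}$ need not be spread, so you cannot invoke the corollary on your $\cN=\Lambda^\kappa(\eps)\cap\tilde S^A_{L,u,D}$ as written. Membership in $\tilde S^A_{L,u,D}$ constrains only the LCD, not the coordinate profile of the lattice point, and nothing in your argument produces the spread that the corollary requires.

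The paper repairs this by defining $\cN$ more narrowly: it takes only those points of $\Lambda^3(\eps')\cap\tilde S^A_{L,u,D}$ that lie within $\ell_\infty$-distance $\eps'/\sqrt{n}$ of some element of $S^A_{L,u,D}\cap\incomp(\delta,\rho)$, and it works with $\eps':=\min(\eps/(2K),\rho/2)$ rather than $\eps$. By \refP{prop:incomp} every incompressible unit vector has at least $\tfrac{1}{2}\rho^2\delta n$ coordinates of size at least $\rho/\sqrt{2n}$; since $\eps'\le\rho/2$, any lattice point within $\eps'/\sqrt{n}$ of such a vector retains that many coordinates of size at least $\rho/(2\sqrt{n})$. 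This forces $\cN\cap\Lambda_\alpha(\eps')$ into a set of the form \eqref{eq:grid}, after which \refC{cor:unstructured-log} legitimately applies and the counting goes through. The extra restriction on $\cN$ costs nothing for property (1), because the vectors to be approximated are already incompressible, but without it the cardinality bound does not follow. (The paper also gets away with $\kappa=3$ rather than a large $\kappa$; the failure probability $(3/\sqrt 2)^{-2n}$ is already below $e^{-n}$.)
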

The proof is analogous to the proof of Proposition 5.1 in \cite{LTV}.
\begin{proof}
We define $\eps' := \min(\eps/(2K),\rho/2)$. $n_0$ and $u$ are chosen so as to satisfy \refC{cor:unstructured-log}. We take $\cN \subset \Lambda^3(\eps') \cap \tilde{S}_{L,u,D}^A$ to be the subset of points whose $\ell_\infty$ distance to at least 1 point in $S_{L,u,D}^A \cap\incomp(\delta,\rho)$ is at most $\eps'/\sqrt{n}$. Note that, with failure probability at most $(3/\sqrt{2})^{-2n} \le e^{-n}$, every unit vector in $S^{A}_{L,u,D}$ satisfies the first condition due to \refL{lem:existence-of-net} and the Hilbert-Schmidt norm assumption. Because the rows of $A$ satisfy \eqref{eq:distance:1} we may apply \refL{lem:stability} and conclude \eqref{eq:stability:guarantee:1} for each row of $A$. We are left with bounding $|\cN|$. From \eqref{eq:net} and \eqref{eq:lattice} we know that $\Lambda^3(\eps')$ is a union of sets of the form $\Lambda_\alpha(\eps')$. Since a compressible vector has at least $\delta n$ entries of size at least $\rho/\sqrt{n}$ and $\rho/\sqrt{n} - \eps'/\sqrt{n} \ge \rho/(2\sqrt{n})$ we have that $\cN \cap \Lambda_\alpha(\eps')$ is a subset of a set of the form \eqref{eq:grid}. Since $\min_i \alpha_i \eps' \le 1/\overline{D} \le (2L/u)e^{n(\gamma/L)^2}$ and the rows satisfy \eqref{eq:distance:1} we may apply \refC{cor:unstructured-log} to each of these subsets to bound their size by $|\Lambda_\alpha(\eps')|(C\gamma)^{cn}$. Therefore the size of $\cN$ is at most $|\Lambda^3(\eps')|(C\gamma)^{cn} \le (C/\eps)^{n}(C \gamma)^{cn}$. By making $\gamma$ sufficiently small (and depending only on $U,b$ and $k$), the size is at most $(U\eps)^{-n}$.
\end{proof}
 \section{The distance theorem}\label{Sec:Distance:Theorem}
 \subsection{$\logRLCD{X}{L}{u}{H^\perp}$ is large}
With \refL{lem:distance:1} in hand we are ready to prove that $\logRLCD{X}{L}{u}{H^\perp}$ is large (see \refT{thm:distance:3}). Much of this section is analogous to Section 5 of \cite{LTV}. To begin we recall a `tensorization' lemma.
\begin{lemma}[Lemma 2.2 \cite{RV-square}]\label{lem:tensorization}
Let $\xi_1,\cdots,\xi_n$ be a collection of independent random variables satisfying $\bP(|\xi_i| \le \eps) \le K\eps$ for all $\eps > \eps_0 > 0$.
Then 
\[
\bP\lrpar{\sum_{i = 1}^d |\xi_i|^2 \leq t^2d} \leq (CK\eps)^n
\]
for all $\eps > \eps_0$, for some $C > 0$.
\end{lemma}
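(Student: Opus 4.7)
The plan is to use the exponential Markov (Laplace transform) method, which is the standard route for tensorization of one-dimensional small-ball estimates. For any $s > 0$, by Markov's inequality and independence of the $\xi_i$,
\[
\bP\lrpar{\sum_{i=1}^n |\xi_i|^2 \leq t^2 n} = \bP\lrpar{e^{-s\sum_i |\xi_i|^2} \geq e^{-st^2 n}} \leq e^{st^2 n} \prod_{i=1}^n \E e^{-s|\xi_i|^2}.
\]
The tensor product structure on the right is exactly what produces the multiplicative-in-$n$ bound, so the whole task reduces to controlling the single-variable Laplace transform $\E e^{-s|\xi_i|^2}$ under the anti-concentration hypothesis, with a subsequent choice of $s$ of order $1/\eps^2$.

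For the single-variable estimate, apply the layer-cake formula (after the substitution $u = e^{-sv^2}$) to write
\[
\E e^{-s|\xi_i|^2} = \int_0^\infty 2sv \cdot e^{-sv^2}\cdot \bP(|\xi_i| \leq v)\, dv.
\]
Split the integral at the threshold $v = \eps$. On $\{v \leq \eps\}$, use the monotone bound $\bP(|\xi_i| \leq v) \leq \bP(|\xi_i| \leq \eps) \leq K\eps$, contributing at most $K\eps(1 - e^{-s\eps^2}) \leq K\eps$. On $\{v > \eps\}$, use the hypothesis $\bP(|\xi_i| \leq v) \leq Kv$, which makes the integrand at most $2sKv^2 e^{-sv^2}$; a standard Gaussian moment calculation then bounds this piece by a constant times $K/\sqrt{s}$. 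Together,
\[
\E e^{-s|\xi_i|^2} \leq K\eps + CK/\sqrt{s}.
\]

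Choosing $s = 1/\eps^2$ (permissible since $\eps > \eps_0$, so the small-ball hypothesis applies at this scale) balances the two terms and makes each factor at most a constant times $K\eps$. The $n$-fold product is then at most $(CK\eps)^n$, and the Markov prefactor $e^{st^2 n}$ is an absolute constant raised to the $n$-th power whenever $t$ and $\eps$ are comparable, which can be absorbed into $C$. This yields the claimed bound.

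The main technical point is extracting the correct $1/\sqrt{s}$ scaling from the tail piece of the single-variable integral, since this is exactly what couples the Laplace transform estimate to the hypothesized linear rate $\bP(|\xi_i| \leq v) \leq Kv$ and makes the final $(CK\eps)^n$ come out at the right scale. Once the one-dimensional bound is in hand, independence does all the remaining work; no further combinatorial or analytic device is required.
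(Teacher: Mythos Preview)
The paper does not supply its own proof of this lemma; it simply quotes it as Lemma~2.2 of \cite{RV-square}. Your argument is precisely the standard Laplace-transform proof from that reference: Markov's inequality plus independence reduces the problem to a one-variable bound on $\E e^{-s|\xi_i|^2}$, which you control via the layer-cake identity, splitting at $v=\eps$, and the choice $s=1/\eps^2$. The computation is correct.

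One minor remark: the lemma as printed in the paper has evident typos (the sum runs to $d$ while the exponent is $n$, and the parameter $t$ on the left should really be $\eps$). You implicitly repair these, and your closing caveat that the prefactor $e^{st^2 n}$ is absorbed when $t$ and $\eps$ are comparable is exactly the right way to reconcile the stated form with the intended one.
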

We now discuss the proof strategy for \refT{thm:distance:3}. In order to show that $\logRLCD{X}{L}{u}{H^\perp}$ is large we would first like to show that $\logRLCD{A}{L}{u}{H^\perp}$ is large and then use this information to show that $\logRLCD{X}{L}{u}{H^\perp}$ is large. This will follow from a proof by contradiction. Assuming $\logRLCD{A}{L}{u}{H^\perp}$ is large, if $\logRLCD{X}{L}{u}{H^\perp}$ is small then there must exist $x \in H^\perp$ for which $\logRLCD{A|X}{L}{u}{x}$ is small (say in some level set $[D,2D]$). Here $A|X$ denotes the matrix obtained by appending $X$ to $A$ as its top row. Using the net from \refD{lem:distance:1} and the fact that $A^\top x = 0$ it will then follow that there exists a net point $y \in \Lambda$ such that $\norm{A^\top y}_2 \leq \norm{(A|X)^\top (x-y)}_2$, where the latter quantity is known to be not too large. On the other hand because $\logRLCD{A}{L}{u}{H^\perp}$ is large the properties of $\Lambda$ and \refP{prop:sbp:matrix:general} will imply a lower bound on $\norm{A_i^\top y}_2$ for every column $A_i$ of $A$. \refL{lem:tensorization} will then imply that $\norm{A^\top y}_2$ is likely large, giving a contradiction. The proof that $\logRLCD{A}{L}{u}{H^\perp}$ is large will follow in a similar way using a recursive argument on the rows of $A^\top$.
\par
There are, however, a few problems with this particular argument. Firstly, in order to guarantee $\norm{A_i y}_2$ is likely not too small it is necessary to use \refP{prop:sbp:matrix:general} with $L = 1$. Since $L$ will be chosen to be of order $\sqrt{d}$ we will have to appeal to \refC{cor:RLCD:compare}, which requires a sufficiently large lower bound on $\logRLCD{A_i}{L}{u}{y}$. The lower bound obtainable through \refL{lem:RLCD:lb} would then require that the second moment of $A_i$ be of order at most $n^2/d$, which does not necessarily hold for all columns of $A$. Therefore we will have to restrict our attention to columns of $A$ with sufficiently small second moment. The corresponding matrix $Q$ will thus need enough rows so as to guarantee that $\norm{Qy}_2$ is likely large. Luckily, because we assume that $A$ satisfies the Hilbert-Schmidt norm assumption, this will turn out to be so. The second issue is more subtle. Since $X$ only satisfies the second moment assumption, \eqref{eq:RLCD-lb-2} of \refL{lem:RLCD:lb} will only give a non-trivial lower bound on $\logRLCD{X}{\tilde{L}}{u}{x}$, where $\tilde{L}$ belongs to $\cN$ from \eqref{eq:RLCD-lb-3} and depends on $x$. In particular the contradiction argument will not directly show that $\logRLCD{X}{L}{u}{H^\perp}$ is large. Instead, we will 
 partition $H^\perp$ as $H^\perp = \cup_{\ell \in \cN} H^\perp_\ell$, with $H^\perp_\ell$ consisting of all vectors with a non-trivial lower bound for parameter $\ell$. We will then show that $\logRLCD{X}{\ell}{u}{H^\perp_\ell}$ is large for every $\ell \in \cN$ and then use the fact that $\cN \subseteq [L,2L]$ to deduce that $\logRLCD{X}{L}{u}{H^\perp}$ is large.
\begin{proposition}\label{prop:distance:2}
There exists $n_0 = n_0(b,K), u = u(b,K), \gamma = \gamma(b,K)$ such that the following holds:
Pick $L \ge 2$. Let $d \le n/\log (\overline{D})$ and $0 < D \leq \overline{D} \leq \frac{2L}{u}e^{(\gamma/L)^2n}$. Let $A = \R^{(n-d+1) \times n}$ be a random matrix with uniformly anti-concentrated entries that satisfies the Hilbert-Schmidt norm assumption and whose rows satisfy
\begin{equation}\label{eq:distance:2:1}
\E|A^i|^2 \leq 
\begin{cases}
    \min \{(1-b)\delta\gamma^2n^2/8, \frac{nL^2}{8}\frac{\overline{D}^2} {D^2}\log_+\lrpar{\frac{Du}{L}},\frac{nL^2}{8}\log_+\lrpar{\frac{\overline{D}u}{L}}\} 
    & \text{ if } i > 1, 
    \\
    \min \{(1-b)\delta \gamma^2n^2/8,\frac{nL^2}{8}\log_+\lrpar{\frac{\overline{D}u}{L}} \}
    & \text{ if } i = 1.
\end{cases}
\end{equation}

Furthermore suppose that
\begin{equation}\label{eq:distance:2:2}
\min_{x \in \incomp(\delta,\rho)}\logRLCD{A^{(1)}}{2}{u}{x} \geq 2L/u.
\end{equation}
Then
\begin{equation}\label{eq:distance:2:3}
\bP[\exists x \in\incomp(\delta,\rho) \cap S_{L,u,D}^A ~s.t.~ Ax = 0, \logRLCD{A^{(1)}}{L}{u}{x} \geq \overline{D}] \leq e^{-cn},
\end{equation}
where $c > 0$ is an absolute constant.
\end{proposition}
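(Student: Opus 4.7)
My plan follows the approach outlined in the paragraph preceding the proposition: approximate a hypothetical bad $x$ by a deterministic net point $y$ via \refL{lem:distance:1}, use $Ax=0$ to force $\norm{Ay}_2$ small, and then derive a contradiction via a small-ball-plus-tensorization argument on the independent rows of $A^{(1)}$. First I would apply \refL{lem:distance:1} to the full matrix $A$ with parameters $\eps = 1/\overline{D}$ and $U = U(b,K)$ large (to be fixed at the end). The row-second-moment bound \eqref{eq:distance:2:1} implies the variance hypothesis \eqref{eq:distance:1} of that lemma (the $i=1$ clause of \eqref{eq:distance:2:1} is the weaker of the two and is exactly \eqref{eq:distance:1}). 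This yields a deterministic net $\cN$ of cardinality at most $(U\eps)^{-n} = (\overline{D}/U)^n$ such that, outside an event of probability $e^{-n}$, each $x \in S^A_{L,u,D}\cap \incomp(\delta,\rho)$ has a neighbor $y \in \cN$ with $\norm{x-y}_\infty \le 1/(\overline{D}\sqrt{n})$ and $\norm{A(x-y)}_2 \le \sqrt{N}/\overline{D}$, where $N = n-d+1$. Moreover, the transfer property (ii) of \refL{lem:distance:1} yields $\logRLCD{A^i}{L/2}{u/6}{y} \ge \overline{D}$ for every $i > 1$.

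On the event $\cE$ and the good net event, $Ax=0$ gives $\norm{A^{(1)} y}_2 \le \norm{Ay}_2 \le \sqrt{N}/\overline{D}$. A union bound over $y \in \cN$ reduces the task to estimating, for each fixed $y \in \cN$,
\[
p_y := \bP\Bigl[\sum_{i=2}^{N}|\langle A^i,y\rangle|^2 \le N/\overline{D}^2 \text{ and } \logRLCD{A^i}{L/2}{u/6}{y} \ge \overline{D}\ \forall\, i > 1\Bigr].
\]
For each $i>1$, combining hypothesis \eqref{eq:distance:2:2} with \refL{lem:stability} (to transfer bounds from $x$ to $y$ and adjust the $L$-parameter) and \refC{cor:RLCD:compare} (to reduce to parameter $1$) extracts a lower bound $\logRLCD{A^i}{1}{u'}{y} \gsim \overline{D}$ for some $u' = u'(b,K)$. \refP{prop:sbp:matrix:general} with $d = 1$ then supplies the scalar small-ball estimate $\bP[|\langle A^i,y\rangle|\le s] \lsim s + 1/\overline{D}$ uniformly in $s>0$ (using $\norm{y}_2 \ge 1/2$). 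Since the rows $A^i$ are independent, \refL{lem:tensorization} with $s = 1/\overline{D}$ gives $p_y \le (C/\overline{D})^{n-d}$. Summing over $\cN$ and invoking $d \le n/\log\overline{D}$ (so that $\overline{D}^{d/n} \le e$),
\[
\bP[\cE] \le e^{-n} + (\overline{D}/U)^n (C/\overline{D})^{n-d} \le e^{-n} + (Ce/U)^n,
\]
and choosing $U$ large enough concludes.

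The main obstacle is the parameter bookkeeping between LCDs at the various scales $(L,u), (L/2,u/6), (2,u)$ and $(1,u')$: hypothesis \eqref{eq:distance:2:2} controls the $(2,u)$-LCD of rows of $A^{(1)}$ on $x$; the net transfer from \refL{lem:distance:1} gives the $(L/2,u/6)$-LCD on $y$; \refP{prop:sbp:matrix:general} with $d=1$ imposes $2L'^2 \ge 3$ on its own parameter $L'$; and the cleanest small-ball exponent demands passing all the way to parameter $L' = 1$. Threading these together requires a simultaneous invocation of \refL{lem:stability}, \refL{lem:RLCD:compare} and \refC{cor:RLCD:compare}, whose variance prerequisites (notably \eqref{eq:stability:assumption:1}) are precisely the reason that \eqref{eq:distance:2:1} carries the nontrivial middle clause $(nL^2/8)(\overline{D}^2/D^2)\log_+(Du/L)$ in the $i>1$ case. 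A secondary but delicate issue is tuning $\eps \asymp 1/\overline{D}$ so that the tensorization threshold matches the net mesh, and selecting $U$ so that the product of the net cardinality $(\overline{D}/U)^n$ and the per-point probability $(C/\overline{D})^{n-d}$ is exponentially small — a balance that is only made possible by the assumption $d \le n/\log\overline{D}$.
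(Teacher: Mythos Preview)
Your proposal is correct and follows essentially the same route as the paper: apply \refL{lem:distance:1} with $\eps=1/\overline{D}$ to produce the deterministic net, use $Ax=0$ to force $\norm{A^{(1)}y}_2\le\sqrt{N}/\overline{D}$, pass from the $(L/2,u/6)$ scale down to scale $1$ via \eqref{eq:distance:2:2}, \refL{lem:stability} and \refC{cor:RLCD:compare}, then invoke \refP{prop:sbp:matrix:general} with $d=1$, tensorize, and finish with the union bound using $d\le n/\log\overline{D}$. Your diagnosis of where the middle clause of \eqref{eq:distance:2:1} and the hypothesis \eqref{eq:distance:2:2} are consumed matches the paper's use of them.
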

\begin{proof}
Take $n_0,u,U,\gamma$ from \refL{lem:distance:1} with $U$ and $\gamma$ to be determined later. Since the rows satisfy \eqref{eq:distance:2:1} we may take $\cN$ to be set guaranteed by \refL{lem:distance:1} with $\eps := 1/\overline{D}$. Since every point in $\cN$ is within $\ell_\infty$ distance $\eps$ of some incompressible vector, the rows satisfy \eqref{eq:distance:2:1} and $A$ satisfies \eqref{eq:distance:2:2}, by \refL{lem:stability} we have 
\[
\inf_{y \in \cN}\logRLCD{A^i}{1}{u/6}{y} \geq 2L/u.
\]
Therefore by \refL{lem:RLCD:compare} $\logRLCD{A^i}{1}{u/6}{y} \leq \logRLCD{A^i}{1}{u/6}{x}$. 
Now take $\cN' \subset \cN$ to be the subset whose points $z$ satisfy  $\logRLCD{A^{(1)}}{1}{u/6}{z} \ge \overline{D}$.  Applying \refP{prop:sbp:matrix:general} with $U := z$ gives
\[
\Pr\left[|A^iz| \leq \frac{1}{\overline{D}} \right] \lsim \overline{D}^{-1}.
\]
Applying \refL{lem:tensorization} to $\sum_{i > 1}|A_iz|^2$ gives
\[
\Pr\left[\norm{A^{(1)}z}_2 \leq \frac{\sqrt{n-d}}{\overline{D}} \right] \le (c\overline{D})^{d-n}.
\]
Note now that if the event
\[
\{\exists x \in\incomp(\delta,\rho) \cap S_{L,u,D}^A ~s.t.~ Ax = 0, \logRLCD{A^{(1)}}{L}{u}{x} \geq \overline{D}\}
\]
occurs then, by \refL{lem:stability} and \refL{lem:distance:1} some point $z \in \cN'$ satisfies $\norm{A^{(1)}z}_2 \le \sqrt{n-d}/\overline{D}$ with probability $e^{-n}$. Taking a union bound over $\cN'$ we have  
\[
\bP(\exists x \in\incomp(\delta,\rho) \cap S_{L,u,D}^A ~s.t.~ Ax = 0, \logRLCD{A^{(1)}}{L}{u}{x} \geq \overline{D}) \le e^{-n} + (U/\overline{D})^{-n}(c\overline{D})^{d-n}  \le e^{-n} + (e/(cU))^{n}.
\]
The desired probability is then achieved by making $\gamma$ sufficiently small, so as to make $U$ sufficiently large.
\end{proof}
Note that \refL{lem:distance:1} bounds the probability that the Randomized Logarithmic LCD of a vector lies in the interval $[D,2D]$. By taking a suitable dyadic decomposition of a range $\left[\underline{D},\overline{D}\right] $ and applying \refP{prop:distance:2} to each interval, the result can be extended to a range.
\begin{corollary}\label{cor:distance:2}
There exists $n_0 = n_0(b,K), u = u(b,K), \gamma = \gamma(b,K)$ such that the following holds:
Pick $L \ge 2$. Let $d \le n/\log (\overline{D})$ and $0 < \underline{D} \le D \leq \overline{D} \leq \frac{L}{u}e^{(\gamma/L)^2n}$. Let $A = \R^{(n-d+1) \times n}$ be a random matrix with uniformly anti-concentrated entries that satisfies the Hilbert-Schmidt norm assumption, \eqref{eq:distance:2:1} and whose rows satisfy 
\begin{equation}\label{eq:distance:2:4}
\E|A^i|^2 \leq 
\begin{cases}
    \min\{(1-b)\delta\gamma^2n^2/8, \frac{nL^2}{8}\frac{\overline{D}^2} {\underline{D}^2}\log_+\lrpar{\frac{\underline{D}u}{L}},\frac{nL^2}{8}\log_+\lrpar{\frac{\overline{D}u}{L}} \} & \text{ if }i > 1 \\
    \min \{(1-b)\delta \gamma^2n^2/8,\frac{nL^2}{8}\log_+\lrpar{\frac{\overline{D}u}{L}} \}& \text{ if } i = 1.
\end{cases}
\end{equation}
Then
\begin{equation}\label{eq:distance:3}
\bP[\exists x \in\incomp(\delta,\rho) ~s.t.~ Ax = 0, \logRLCD{A}{L}{u}{x} \in [\underline{D},\overline{D}],\logRLCD{A^{(1)}}{L}{u}{x} \geq \overline{D}] \leq e^{-cn},
\end{equation}
where $c > 0$ is an absolute constant.
\end{corollary}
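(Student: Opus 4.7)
The corollary is obtained from \refP{prop:distance:2} by a standard dyadic cover of $[\underline{D}, \overline{D}]$ together with a union bound; the key nontrivial check is that the uniform row bound \eqref{eq:distance:2:4} implies the per-scale row bound \eqref{eq:distance:2:1} required by \refP{prop:distance:2} at every scale in the cover.

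Set $K := \lceil \log_2(\overline{D}/\underline{D}) \rceil$ and $D_k := 2^k \underline{D}$ for $k = 0, 1, \ldots, K$, so that the intervals $[D_k, 2 D_k]$ cover $[\underline{D}, \overline{D}]$. Any $x \in \incomp(\delta,\rho)$ with $\logRLCD{A}{L}{u}{x} \in [\underline{D}, \overline{D}]$ lies in $S^A_{L,u,D_k}$ for some $k$, so the event in \eqref{eq:distance:3} is contained in
\[
\bigcup_{k=0}^{K} \Bigl\{\, \exists\, x \in \incomp(\delta, \rho) \cap S^A_{L,u,D_k} \ :\ Ax = 0,\ \logRLCD{A^{(1)}}{L}{u}{x} \geq \overline{D} \Bigr\}.
\]

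To apply \refP{prop:distance:2} at $D = D_k$, I verify that \eqref{eq:distance:2:4} implies \eqref{eq:distance:2:1} at each such scale. The first and third terms of the minimum in \eqref{eq:distance:2:1} are independent of $D$ and already appear in \eqref{eq:distance:2:4}. For the middle term, since $y \mapsto \log_+(y)/y^2$ is non-increasing on $y > \sqrt{e}$, for every $D_k \geq \underline{D}$ in the relevant range I obtain
\[
\frac{\overline{D}^2}{\underline{D}^2}\log_+\!\Bigl(\frac{\underline{D} u}{L}\Bigr)
\ \geq\
\frac{\overline{D}^2}{D_k^2}\log_+\!\Bigl(\frac{D_k u}{L}\Bigr),
\]
(the case $\underline{D} u/L \leq 1$ being handled separately, as the right-hand side is then trivially zero). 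The remaining hypotheses of \refP{prop:distance:2}---namely $L \geq 2$, the range $\overline{D} \leq (2L/u) e^{(\gamma/L)^2 n}$, $d \leq n/\log\overline{D}$, the $i=1$ row bound, the Hilbert-Schmidt condition, and the lower bound $\min_{x \in \incomp(\delta,\rho)} \logRLCD{A^{(1)}}{2}{u}{x} \geq 2L/u$---are carried over from the assumptions of the corollary.

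\refP{prop:distance:2} then gives failure probability at most $e^{-cn}$ for each $k$. Since $K + 1 = O(\log \overline{D}) \leq O(\log(L/u) + n(\gamma/L)^2) \leq O(n)$ under the assumption $\overline{D} \leq (L/u) e^{(\gamma/L)^2 n}$, the union bound yields total failure probability at most $(K+1) e^{-cn} \leq e^{-c'n}$ for any $c' < c$, which gives the conclusion after adjusting the constant. The main (and essentially only) technical point is the monotonicity argument linking the row bounds across scales; the dyadic cover and absorption of the polynomial factor $K+1$ into the exponent are routine.
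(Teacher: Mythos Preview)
Your overall strategy---a dyadic cover of $[\underline{D},\overline{D}]$ followed by a union bound over the $O(n)$ scales---is exactly the paper's approach. The gap is in the monotonicity step linking the row bounds across scales.

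Write $A$ for the first term, $B=\frac{nL^2}{8}\frac{\overline{D}^2}{\underline{D}^2}\log_+(\underline{D}u/L)$, $C=\frac{nL^2}{8}\log_+(\overline{D}u/L)$, and $B'(D_k)=\frac{nL^2}{8}\frac{\overline{D}^2}{D_k^2}\log_+(D_ku/L)$. To pass from \eqref{eq:distance:2:4} to \eqref{eq:distance:2:1} at scale $D_k$ you need $\min\{A,B,C\}\le \min\{A,B'(D_k),C\}$, i.e.\ $\min\{A,B,C\}\le B'(D_k)$ for every $k$. Your displayed inequality $B\ge B'(D_k)$ is in the \emph{wrong direction}: it only yields $\min\{A,B,C\}\ge \min\{A,B'(D_k),C\}$, so from $\E|A^i|^2\le \min\{A,B,C\}$ you cannot conclude $\E|A^i|^2\le \min\{A,B'(D_k),C\}$. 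Moreover, your inequality itself is false when $\underline{D}u/L\in(1,\sqrt{e})$, since $y\mapsto \log_+(y)/y^2$ is \emph{increasing} there; your parenthetical only disposes of $\underline{D}u/L\le 1$.

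The paper's fix is to use unimodality rather than monotonicity: the map $D\mapsto \frac{1}{D^2}\log_+(Du/L)$ is unimodal on $(L/u,\infty)$, so its minimum over $[\underline{D},\overline{D}]$ is attained at an endpoint. Hence $B'(D_k)\ge \min\{B'(\underline{D}),B'(\overline{D})\}=\min\{B,C\}\ge \min\{A,B,C\}$ for every $D_k$ in the range, which is precisely the inequality you need.
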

\begin{proof}
We may assume that $\overline{D}/\underline{D}$ is a power of 2 by replacing $\overline{D}$ with a number that is only twice as large, being at most $(2L/u)e^{(\gamma/L)^2n}$. Take $n_0,u,\gamma$ from \refP{prop:distance:2}. Note that for every $p > 0$ the function $x \mapsto \frac{1}{x^2}\log(px)$ is unimodal on the interval $(1/p,\infty)$. In particular over any interval in $(1/p,\infty)$ the function is minimized at one of the endpoints. Hence
\[
\min_{D \in \left[\underline{D},\overline{D}\right]} \frac{nL^2}{8}\frac{\overline{D}^2}{D^2}\log_+\lrpar{\frac{Du}{L}} = \min\left\{  \frac{nL^2}{8}\log_+\lrpar{\frac{\overline{D}u}{L}}, \frac{nL^2}{8}\frac{\overline{D}^2}{\underline{D}^2}\log_+\lrpar{\frac{\underline{D}u}{L}} \right\}.
\]
Next note that the set of incompressible vectors $x$ satisfying $\logRLCD{A}{L}{u}{x} \in [\underline{D},\overline{D}]$ are contained in $\cup_{2^{-i}\overline{D} \ge \underline{D}} S^A_{L,u,2^{-i}\overline{D}}$.
By \refP{prop:distance:2} 
\[
\Pr[\exists x \in\incomp(\delta,\rho) \cap S_{L,u,2^{-i}\overline{D}}^A ~s.t.~ Ax = 0, \logRLCD{A^{(1)}}{L}{u}{x} \geq \overline{D}] \leq e^{-n},
\]
Since $\log_2(|\overline{D}-\underline{D}|) \lsim n$ we may take a union bound over all choices for $2^{-i}\overline{D}$ and conclude that  
\[
\bP[\exists x \in\incomp(\delta,\rho) ~s.t.~ \logRLCD{A}{L}{u}{x} \in \left[\underline{D},\overline{D}\right], Ax = 0, \logRLCD{A^{(1)}}{L}{u}{x} \geq \overline{D}] \leq Cne^{-n} \le e^{-cn}.
\]
\end{proof}

\begin{theorem}\label{thm:distance:3}
There exists $n_0 = n_0(b,K), u = u(b,K), \lambda = \lambda(b,K)$ such that the following holds: Let $n \geq n_0$ and $1 \leq d \leq \lambda n/\log n$.
Let $A \in \R^{n \times (n-d)}$ be a random matrix that satisfies the Hilbert-Schmidt norm assumption and has uniformly anti-concentrated entries. Let $X \in \R^n$ be a random vector that satisfies the second moment assumption and has uniformly anti-concentrated entries. Then
\begin{equation}\label{eq:distance:3}
\bP[\exists x \in\incomp(\delta,\rho) \text{ and }A^\top x = 0~s.t.~\logRLCD{X}{L}{u}{x} \leq c_1\sqrt{d}e^{c_2n/d}] \leq e^{-c_3 n},
\end{equation}
where $L \in [2\sqrt{d},C\sqrt{d}]$ and $C,c_1,c_2,c_3 > 0$ depend only on $b$ and $K$.
\end{theorem}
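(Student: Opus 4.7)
The plan is to carry out the two-step strategy described in the introduction. Fix $L:=2\sqrt d$ and target $\overline D:=c_1\sqrt d\,e^{c_2 n/d}$, and let $\cN$ be the discrete set from~\eqref{eq:RLCD-lb-3}, of size $|\cN|\lesssim\sqrt d$. Because the row second-moment hypotheses of \refC{cor:distance:2} are stronger than the ambient Hilbert--Schmidt bound, I first replace $A$ by a column submatrix $Q$: by Markov's inequality applied to $\sum_i\E\norm{A_i}_2^2\le Kn(n-d)$, the columns $A_i$ with $\E\norm{A_i}_2^2\le C_0 n^2/d$ assemble into $Q\in\R^{n\times n'}$ with $n'\ge n-d/2$. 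Since $H^\perp\subseteq\ker(Q^\top)$, it is enough to lower-bound the LCD on every incompressible $v\in\ker(Q^\top)$. For each such $v$, \refL{lem:RLCD:lb}(3) produces some $\tilde L_v\in\cN$ for which $\logRLCD{Y}{\tilde L_v}{u}{v}\ge \tilde L_v/u+cn/\sqrt{\Var Y}$ simultaneously for $Y$ equal to $X$ or to any column of $Q$; the second moment bound on the columns of $Q$ yields an initial lower bound $\underline D\gtrsim\sqrt d$. I partition $\ker(Q^\top)\cap\incomp(\delta,\rho)\cap\bS^{n-1}$ into classes $H^\perp_\ell:=\{v:\tilde L_v=\ell\}$ indexed by $\ell\in\cN$ and prove the desired bound on each class, absorbing the $|\cN|$ factor into a final union bound.

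Fix $\ell\in\cN$. The dichotomy of \refC{cor:distance:2} rules out the intermediate range $[\underline D,\overline D]$ for $\logRLCD{M}{\ell}{u}{v}$ on incompressible $v$ in the relevant null space, provided $\logRLCD{M^{(1)}}{\ell}{u}{v}\ge\overline D$; combined with the initial lower bound $\underline D$, this yields the sought $\overline D$ bound for $M$. A key observation is that the proof of \refP{prop:distance:2} uses the hypothesis ``$Mv=0$'' only to kill the term $\norm{M^{(1)}v}_2$ in a single triangle-inequality step, so the conclusion in fact remains valid under the weaker assumption $M^{(1)}v=0$. Applying this generalized dichotomy with $M=(X^\top;Q^\top)$ and $M^{(1)}=Q^\top$ reduces lower-bounding $\logRLCD{X}{\ell}{u}{v}$ to lower-bounding $\logRLCD{Q^\top}{\ell}{u}{v}$ on the same class, at the cost of an $e^{-cn}$ term. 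The latter is handled by an iterated application of the same dichotomy in which one row of $Q^\top$ is stripped at each step and the invariant ``$\logRLCD{\cdot}{\ell}{u}{v}\ge\overline D$ on incompressible vectors in the current null space'' is propagated: the corollary rules out $[\underline D,\overline D]$ and the initial lower bound rules out values below $\underline D$. The matrix-dimension constraint $d'\le n/\log\overline D\asymp d$ inside \refC{cor:distance:2} limits the number of stripping rounds to $O(d)$, which is compatible with $Q^\top$ having $n'\ge n-d/2$ rows; the terminal base case, where no further stripping is permitted, is handled directly via the initial lower bound together with \refC{cor:RLCD:compare} to transfer LCDs between different values of the parameter $L$.

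I expect the main obstacle to be precisely this interplay between the recursion and the matrix-dimension constraint of \refC{cor:distance:2}: its combination with the exponential target $\overline D\asymp\sqrt d\,e^{c_2 n/d}$ is exactly what forces the hypothesis $d\le\lambda n/\log n$ on the codimension. The final estimate aggregates the $e^{-cn}$ failure probability per dichotomy step with the $|\cN|\lesssim\sqrt d$ choices of $\ell$ and $O(d)$ stripping rounds, all of which are absorbed comfortably into the target $e^{-c_3 n}$.
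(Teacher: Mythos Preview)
Your row-stripping recursion has no valid base case. You propose to strip one row of $Q^\top$ at a time and propagate the invariant ``$\logRLCD{\cdot}{\ell}{u}{v}\ge\overline D$ on incompressible vectors in the current null space.'' To deduce the invariant for $M_k$ from that for $M_{k+1}$ via the dichotomy of \refC{cor:distance:2} you need the invariant to hold for $M_{k+1}$ on the \emph{larger} null space $\ker(M_{k+1})$; so the induction must start from the most-stripped matrix $M_{k^*}$. But after exhausting the $O(d)$ rounds permitted by the dimension constraint, $M_{k^*}$ still has $\sim n-O(d)$ rows and its null space has dimension $\asymp d$. You then need $\logRLCD{M_{k^*}}{\ell}{u}{v}\ge\overline D$ for all incompressible $v$ in that null space, and nothing you have supplies this: the ``initial lower bound'' from \refL{lem:RLCD:lb} only gives $\underline D\asymp\sqrt d$, not $\overline D\asymp\sqrt d\,e^{c_2n/d}$, and \refC{cor:RLCD:compare} merely transfers between different values of the parameter $L$, it does not boost $\underline D$ to $\overline D$. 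So the recursion is floating.

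The paper's recursion is organized differently and this is precisely what makes it terminate. It runs not over rows but over a dyadic scale $D_i=\overline D\,2^{-i}$ of the LCD: the bad event $\cH_i$ is ``some $x\in\cS_\ell$ has $\logRLCD{Q_i}{\ell}{u}{x}\in[C_2\ell/u,D_i)$,'' where $Q_i$ consists of all columns of $A$ with second moment at most a threshold $q_i$ depending on $D_i$. One shows $\cH_i\subset\cE_i\cup\cF_i\cup\cH_{i+1}$ with $\Pr(\cE_i),\Pr(\cF_i)\le e^{-cn}$, and the recursion terminates trivially at $i=\tau$ because $D_\tau=C_2\ell/u$ makes the interval $[C_2\ell/u,D_\tau)$ empty. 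No unproved base case is required. A secondary issue: your claim that a single $\tilde L_v\in\cN$ works simultaneously for $X$ and every column of $Q$ is not what \refL{lem:RLCD:lb}(3) says (the $\tilde L$ depends on the random vector as well as on $v$); the paper sidesteps this by using part~(1) of that lemma for the columns of $Q$, which have second moment $\lesssim n^2/d$, and reserving part~(3) for $X$ alone.
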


\begin{proof}
We pick $u,n_0$ and $\gamma$ as guaranteed by \refC{cor:distance:2}. Since $L \le C\sqrt{d} \ll n$ and $X$ satisfies the second moment condition by \refL{lem:RLCD:lb} we can write $\incomp(\delta,\rho) \cap \operatorname{null}(A) = \cup_{\ell \in \cN} \cS_\ell$ where $\cS_\ell$ consists solely of points $x$ that satisfy 
\begin{equation}\label{eq:distance:3:proof:1}
\logRLCD{X}{\ell}{u}{x} \ge \frac{\ell}{u} + C_1,
\end{equation}
where $C_1 > 0$ is a sufficiently large constant. Suppose we prove that
\[
\inf_{\ell \in \cN} \bP(\exists x \in \cS_\ell ~s.t~ \logRLCD{X}{\ell}{u}{x} \le c_1\sqrt{d}e^{c_2n/d}) \le e^{-c_3n}.
\]

Since $\ell^2 \log_+(\theta u/\ell) \ge L^2\log_+(\theta(u/2)/L)$ for all $\theta \ge c_1\sqrt{d}e^{c_2n/d}$  from the definition of Randomized Logarithmic LCD we may conclude that 
\[
\inf_{\ell \in \cN} \bP(\exists x \in \cS_\ell ~s.t~ \logRLCD{X}{L}{u/2}{x} \le c_1\sqrt{d}e^{c_2n/d}) \le e^{-c_3n}.
\]
Since $|\cN| \lsim L \ll n$ we may apply a union bound over all $\cS_\ell$ and conclude \eqref{eq:distance:3}.
Thus, we are left with bounding 
\[
\bP(\exists x \in \cS_\ell ~s.t~ \logRLCD{X}{L}{u/2}{x} \le c_1\sqrt{d}e^{c_2n/d}),
\]
where $\ell \in \cN$ is arbitrary.

For convenience, given a matrix $B$ and a row vector $Y$ we write $B|Y$ to denote the matrix obtained by appending $Y$ to $B$ as its first row. We now introduce some parameters. 
\begin{align}
    \label{val:LB}
    \underline{D} &= \ell/u + C_1, \\
    \label{val:UB}
    \overline{D} &:= \max\{t \le (\ell/u)e^{(\gamma/\ell)^2n}~:~t/(C_2\ell/u) \text{ is a power of }2\},  \\
    \label{val:D} 
    D_i &:= \overline{D}2^{-i}, \\
    \label{val:tau}
    \tau &:= \arg(D_i = C_2\ell/u),
    \\
    \label{val:q_i}
    q_i &:= \min\lrpar{\frac{c_2n^2}{\ell^2}, \frac{n\ell^2}{8}\log_+\lrpar{\frac{D_iu}{\ell}}}, \\
    \label{val:Q_i}
    Q_i &:= \text{sub matrix of }A^\top \text{ consisting of all rows with second moment at most }q_i. 
\end{align}
Assuming $L \in [2\sqrt{d},C\sqrt{d}]$, $\overline{D}$ is of the form the LCD is required to satisfy in \eqref{eq:distance:3:proof:1}.
We choose $c_2$ to be a sufficiently small constant such that any vector $X$ with second moment at most $c_2n^2/\ell^2$ satisfying the assumptions of \refL{lem:RLCD:lb} for \eqref{eq:RLCD-lb-1} will satisfy $\inf_{x \in \incomp(\delta,\rho)} \logRLCD{X}{t}{u}{x} \ge C_2t/u$, for any value of $t$, where $C_2$ is a sufficiently large constant. We also note that 
\[
\frac{n\ell^2}{8}\log_+\lrpar{\frac{\overline{D}u}{\ell}} \le \frac{n\ell^2}{8}\log_+(\exp((\gamma/\ell)^2n)) \le \gamma^2n^2/8.
\]
We now define a number of events:
\begin{align}
        \cH_i &:= \{\exists x \in \cS_\ell ~s.t.~ \logRLCD{Q_i}{\ell}{u}{x} \in [C_2\ell/u,D_i)\}, \\
        \cE_i &:= \{\exists x \in \cS_\ell~s.t.~\logRLCD{Q_{i+1}}{\ell}{u}{x} \in [C_2\ell/u,D_i) \text{ and } \logRLCD{Q_{i+1}}{\ell}{u}{x} \geq D_{i+1}\}, \\
        \cF_i &:= \bigcup_{Y \in Q_i\setminus Q_{i+1}}\{ \exists x \in \cS_\ell~s.t.~\logRLCD{Q_{i+1}|Y}{L}{u}{x} \in [C_2\ell/u,D_i) \text{ and } \logRLCD{Q_{i+1}}{\ell}{u}{x} \geq D_{i+1}\}.
    \end{align}
We claim that $
    \cH_i \subset \cE_i \cup \cF_i \cup \cH_{i+1}$. To see this we case on whether or not $Q_i = Q_{i+1}$

    \begin{enumerate}
        \item $Q_i = Q_{i+1}$. If $\cH_i \not \subset \cE_i$ then either there exists $x \in \cS_\ell$ such that $\logRLCD{Q_{i+1}}{\ell}{u}{x} < D_{i+1}$ or all such $x$ satisfy $\logRLCD{Q_{i+1}}{\ell}{u}{x} \ge D_i$. Since the last case is disjoint from $\cH_i$ we conclude that $\cH_i \subset \cH_{i+1}$.
        \item $Q_i \neq Q_{i+1}$. If $\cH_i \not \subset \cF_i$ then either there exists $x \in \cS_\ell$ such that $\logRLCD{Q_{i+1}}{\ell}{u}{x} < D_{i+1}$ or all such $x$ satisfy $\logRLCD{Q_{i+1}|Y}{\ell}{u}{x} > D_i$ for any $Y \in Q_i\setminus Q_{i+1}$. In particular this implies a lower bound of the LCD of every row in $Q_i$ so $\logRLCD{Q_i}{\ell}{u}{x} > D_i$. Since the last case is disjoint from $\cH_i$ we conclude that $\cH_i \subset \cH_{i+1}$.
    \end{enumerate}
    By the choice of $c_2$ and definition of $Q_0$, $Q_0$ satisfies $\inf_{x \in \cS_\ell}\logRLCD{Q_0}{\ell}{u}{x} \geq C\ell_2/u$. In particular if $x$ has LCD less that $\overline{D}$ it must be in the range $[C_2\ell/u,\overline{D})$. Therefore 
    \[
    \bP(\exists x \in \cS_\ell~s.t.~\logRLCD{Q}{\ell}{u}{x} < \overline{D}) \le \bP(\cH_0) \le \bP(\cup_{0 \leq i \leq \tau} (\cE_i \cup \cF_i)) \le \sum_{i = 0}^\tau (\Pr[\cE_i] + \Pr[\cF_i]).
    \]
    To bound $\Pr[\cE_i]$ and $\Pr[\cF_i]$ we will apply \refC{cor:distance:2}.
    Suppose the following inequalities hold for all $0 \leq i \leq \tau$.:
\begin{align}
    \label{eq:lcd-Qi} 
    \inf_{x \in \incomp(\delta,\rho)}\logRLCD{Q_0}{2}{u}{x} &\geq 2\ell/u,\\
    \label{eq:size-Qi} 
    \text{the number of rows in }Q_i &\geq n - n/\log(D_i), \\
    \label{eq:var}
    \frac{c_2n^2}{\ell^2} &\leq (1-b)\delta\gamma^2 n^2/8, \\
    \label{eq:Di-vs-D}
    (1/D_i)^2\log_+\lrpar{\frac{D_iu}{\ell}} &\leq (1/\overline{D})^2\log_+\lrpar{\frac{\overline{D}u}{\ell}}.  
\end{align}
    Note that \eqref{eq:Di-vs-D} and \eqref{eq:var} imply 
\[
\min\left\{\frac{1}{8}(1-b)\delta\gamma^2n^2, \frac{n\ell^2}{8}\frac{D_i^2}{\underline{D}^2}\log_+\lrpar{\frac{\underline{D}u}{\ell}},q_i \right\}
= q_i.
\]
\eqref{val:q_i},\eqref{val:Q_i},\eqref{eq:lcd-Qi} and \eqref{eq:size-Qi} then imply, by \refC{cor:distance:2}, that $\bP(\cE_i),\bP(\cF_i) \le e^{-cn}$. Therefore 
\[
\sum_{i = 0}^\tau (\Pr[\cE_i] + \Pr[\cF_i]) \leq Cne^{-cn} \le e^{-cn}.
\]

Our next step is to bound 
$\bP(\exists x \in \cS_\ell~s.t.~\logRLCD{Q|X}{\ell}{u}{x} < \overline{D}, \logRLCD{Q}{L}{u}{x} \ge \overline{D})$.

Suppose the following equation holds:
\begin{equation}
\label{eq:D_-vs-D}
(1/\overline{D})^2\log_+\lrpar{\frac{\overline{D}u}{\ell}} < (1/\underline{D})^2\log_+\lrpar{\frac{\underline{D}u}{\ell}}. 
\end{equation}
Then \eqref{eq:D_-vs-D} and \eqref{eq:var} imply that
\[
\min\left\{\frac{1}{8}(1-b)\delta\gamma^2n^2, \frac{n\ell^2}{8}\frac{\overline{D}^2}{\underline{D}^2}\log_+\lrpar{\frac{\underline{D}u}{\ell}},q_0 \right\}
= q_0.
\]
Since $X$ has the second moment assumption it satisfies $\E\norm{X}_2^2 \le c_3n^2$ where $c_3$ is sufficiently small. In particular we may take $c_3$ to be smaller than $(1-b)\delta\gamma^2/8$, in which case $\E\norm{X}_2^2$ is at most 
$\min\{(n\ell^2/8)\log_+(\overline{D}u/\ell),(1-b)\delta\gamma^2n^2/8\}$.
\eqref{val:Q_i},\eqref{eq:lcd-Qi},\eqref{eq:size-Qi} then imply, by \refC{cor:distance:2}, that the probability is at most $e^{-cn}$. Therefore 
\[
\bP(\exists x \in \cS_\ell~s.t. \logRLCD{X}{L}{u}{x} < \overline{D}) \le e^{-cn} + e^{-cn} \le e^{-cn}.
\]
We are now left with verifying \eqref{eq:lcd-Qi},
\eqref{eq:size-Qi}, \eqref{eq:var}, \eqref{eq:Di-vs-D} and \eqref{eq:D_-vs-D}.
Verifying \eqref{eq:lcd-Qi} is immediate since all rows of $Q_0$ satisfy $\inf_{x \in \incomp(\delta,\rho)}\logRLCD{X}{L}{u}{x} \ge C_2\ell/u$ and we can take $C_2 > 2$. To verify \eqref{eq:var} we use Markov's inequality. Let $n_i$ denote the number of rows in $Q$ with second moment at most $q_i$. Since $A$ satisfies the Hilbert-Schmidt norm assumption, Markov's inequality implies that 
\[
n_i \ge n - d - \frac{Kn(n-d)}{q_i} \ge n - \frac{Kn(n-d)}{(n\ell^2/8)\log_+(D_iu/\ell)} \ge n - \frac{8Kn}{\ell^2\log(D_iu/\ell)}.
\]
For the negative term to be at most $n/\log(D_i)$ it suffices that $8K/\ell^2 \le \log(D_iu/\ell)/\log(D_i)$. Since the map $t \mapsto \log(tu/\ell)/\log(t)$ is increasing for $t \ge \ell/u$ and $D_i \ge C_2\ell/u$ it suffices that $(8K/\ell^2) \le \log(C_2)/\log(C_2\ell/u)$. Since $(\ell^2\log(\ell))^{-1} \to 0 \text{ as } \ell \to \infty$ we can pick $L$ large enough in the range of $[2\sqrt{d},C\sqrt{d}]$ so that $\ell$ satisfies this constraint, thus verifying \eqref{eq:var}.
Verifying \eqref{eq:size-Qi} is immediate since $\ell \ge 1$ and we can make $c_2$ smaller than $(1-b)\delta\gamma^2/8$. 
To verify \eqref{eq:Di-vs-D}
 note that the map $t \mapsto \frac{\log_+(t)}{t^2}$ is decreasing when $t \geq e$. Since we can take $C_2 \ge e$ and $D_i \ge C_2\ell/u$ we get 
\[
\frac{(\overline{D}u/L)^2}{\log_+(\overline{D}u/L)} \leq \frac{(D_iu/L)^2}{\log_+(D_iu/L)} \implies (1/D_i)^2\log\lrpar{\frac{D_iu}{\ell}} \le (1/\overline{D})^2\log_+\lrpar{\frac{\overline{D}u}{\ell}}
\]
Finally, we verify \eqref{eq:D_-vs-D}. Since $d \leq \lambda n/\log(n)$ we have that $\overline{D} \geq n^{\gamma^2/(C\lambda)}$. Since $\log(1+x) > \min(1,x/2)$ for $x > 0$ we have that 
\[
\overline{D}^2\log_+\lrpar{\underline{D}u/\ell} \geq n^{\gamma^2/(C\lambda^2)} \cdot \min \lrpar{1, C_2u/\ell} \gsim n^{\gamma^2/(C\lambda^2)}. 
\]
Since $\underline{D}^2\log_+(\overline{D}u/\ell) \le (\ell/u + C_2)^2\gamma^2n^2/\ell^2 \ll n^{2.5}$ and $n^{2.5} \ll n^3$ we may take $\lambda < \gamma/\sqrt{C/3}$ so that $\overline{D}^2\log_+(\underline{D}u/\ell) \ge \underline{D}^2\log_+(\overline{D}u/\ell)$. Rearranging this inequality, we conclude that \eqref{eq:D_-vs-D} is satisfied. Having verified the necessary equations the result follows.
\end{proof}
\subsection{Proving the distance theorem}
 We now prove the distance theorem as a quick application of \refC{cor:sbp-for-projection} and \refT{thm:distance:3}
\begin{proof}[Proof of \refT{thm:distance}]
 Since $H^\perp$ lies in the nullspace of $A^\top$ by \refL{lem:comp}, with failure probability at most $e^{-cn}$, all unit vectors in $H^\perp$ are incompressible. 
By \refT{thm:distance:3} there exists choices of $u,\eta,\gamma$ such that 
\[
\Pr[\exists x \in\incomp(\delta,\rho)~s.t.~ A^\top x = 0 \text{ and } \logRLCD{X}{L}{u}{x} \leq c_1\sqrt{d}e^{c_2n/d}] \leq e^{-cn},
\]
where $L \in [2\sqrt{d},C_1\sqrt{D}]$.
In particular every vector $x \in H^\perp \cap \bS^{n-1}$ satisfies $\logRLCD{X}{L}{u}{x} > c_1\sqrt{d}e^{c_2n}$. By inspection of \eqref{eq:RLCD:subspace} from \refD{RLCD:def} it follows that $\logRLCD{X}{L}{u}{H^\perp} > c_1\sqrt{d}e^{c_2n}$. Since $2L^2 \ge d+2$ by \refC{cor:sbp-for-projection} it follows that 
\begin{align*}
\cL\lrpar{P_{H^\perp}X,t\sqrt{d}}
&\leq e^{-cn} + \lrpar{\frac{CLu}{\sqrt{d}}}^d\max\{t,\frac{\sqrt{d}}{c_1\sqrt{d}e^{-c_2n/d}}\}^d, \\
&\leq e^{-cn} + C^d(t^d + e^{-cn}),  \\
&\le e^{-cn} + (Ct)^d.
\end{align*}
\end{proof}
\begin{remark}\label{rem:discussion}
In the distance theorem for rectangular matrices that appears in \cite{GL} one can take $A \in \R^{n \times (n-d)}$ such that $d = O(n)$. Recall that \refT{thm:distance} requires that $d = O(n/\log n)$. The stronger restriction on $d$ is used for the union bound argument in the proof of \refP{prop:distance:2} and used to prove \eqref{eq:D_-vs-D} in the proof of \refT{thm:distance:3}. The restriction in the proof of \refT{thm:distance:3} can be overcome if one is more careful in bounding the terms in \eqref{eq:D_-vs-D} and assumes a stronger second moment constraint ($\Var|X| \lsim n$) or if one can prove a lower bound on Randomized Logarithmic LCD stronger than \eqref{eq:RLCD-lb-2} (the lower bound should be $(1+c)L/u$ where $c > 0$).  The more serious bottleneck is from \refP{prop:distance:2}. The issue is that the bound on the size of the net in \refP{prop:distance:2}, $(\overline{D}/U)^n$, will not be sufficiently small. To see this note that in order to prove \refP{prop:distance:2} using the net bound it is necessary that $\overline{D}^dU^{-n}$ be exponentially small (say at most $c^{-n}$). Therefore $d \leq n\log(cU)/\log(\overline{D})$. Since $\overline{D} = (2\sqrt{d}/u)\exp(\gamma n/d)$ this condition is weaker than $d \leq n\log(U)/\log(d) \implies d\log(d) \leq n\log(U)$, which fails to be true once $d$ is much larger than $n/\log(n)$. It seems that one would require a stronger result than \refT{thm:unstructured} or an alternative way to bound the net size. We note that in \cite{RV} the net used for their LCD variant \eqref{eq:LCD-og} has size of order $(\overline{D}/\sqrt{n})^n$. Such a bound for our net would be sufficient.
\end{remark}
As mentioned earlier, even without the distance theorem beyond this range one can recover smallest singular value estimates for inhomogenous matrices when $\lambda n/\log(n) \leq d \leq cn$. This fact will appear in a follow up work and the proof strategy is similar in spirit to the one employed in \cite{Litvak} for bounding the smallest singular value of rectangular subgaussian matrices of the same aspect ratio. In this way the distance theorem proven here is sufficient to recover smallest singular value estimates for inhomogenous rectangular matrices of all aspect ratios.
\bibliographystyle{plain}

\end{document}